\DeclareMathOperator{\Aut}{Aut}
\DeclareMathOperator{\image}{Im}
\DeclareMathOperator{\kernel}{ker}
\DeclareMathOperator{\Res}{res}
\newcommand{\beue}{\bar{\eue}}
\newcommand{\bone}{\mathbf{1}}
\newcommand{\gr}{\mathrm{gr}}
\newcommand{\argu}{\hbox to 7truept{\hrulefill}}
\newcommand{\euV}{\mathscr{V}}
\newcommand{\euE}{\mathscr{E}}
\newcommand{\euP}{\mathscr{P}}
\newcommand{\eue}{\mathbf{e}}
\newcommand{\K}{\mathbb{K}}
\newcommand{\F}{\mathbb{F}}
\newcommand{\dbN}{\mathbb{N}}
\newcommand{\N}{\dbN}
\newcommand{\Z}{\mathbb{Z}}
\newcommand{\calG}{\mathcal{G}}
\newcommand{\rmH}{\mathrm{H}}
\newcommand{\bfH}{\mathbf{H}}
\newcommand{\bfLam}{\mathbf{\Lambda}}
\newcommand{\op}{\mathrm{op}}
\DeclareFontFamily{U}{wncy}{}
\DeclareFontShape{U}{wncy}{m}{n}{<->wncyr10}{}
\DeclareSymbolFont{mcy}{U}{wncy}{m}{n}
\DeclareMathSymbol{\Sha}{\mathord}{mcy}{"58}
\DeclareMathSymbol{\sha}{\mathord}{mcy}{"78}
\begin{document}

	\newtheorem{thm}{Theorem}[section]
	\newtheorem{cor}[thm]{Corollary}
	\newtheorem{lem}[thm]{Lemma}
	\newtheorem{prop}[thm]{Proposition}
	\newtheorem{defin}[thm]{Definition}
	\newtheorem{exam}[thm]{Example}
	\newtheorem{examples}[thm]{Examples}
	\newtheorem{rem}[thm]{Remark}
	\newtheorem{case}{\sl Case}
	\newtheorem{claim}{Claim}
	\newtheorem{fact}[thm]{Fact}
	\newtheorem{question}[thm]{Question}
	\newtheorem{conj}[thm]{Conjecture}
	\newtheorem*{notation}{Notation}
	\swapnumbers
	\newtheorem{rems}[thm]{Remarks}
	\newtheorem*{acknowledgment}{Acknowledgment}
	
	\newtheorem{questions}[thm]{Questions}
	\numberwithin{equation}{section}

	
	\title[Oriented pro-$\ell$ RAAGs and maximal pro-$\ell$ Galois groups]{Oriented right-angled Artin pro-$\ell$ groups \\ and maximal pro-$\ell$ Galois groups}
	\author[S.~Blumer]{Simone Blumer}
	\author[C.~Quadrelli]{Claudio Quadrelli}
	\author[Th.~Weigel]{Thomas S. Weigel}
	\address{Department of Mathematics and Applications, University of Milano Bicocca, Milan (Italy)}
	\email{s.blumer@campus.unimib.it}
	\address{Department of Science and High-Tech, University of Insubria, Como (Italy)}
	\email{claudio.quadrelli@uninsubria.it}
	\address{Department of Mathematics and Applications, University of Milano Bicocca,  Milan (Italy)}
	\email{thomas.weigel@unimib.it}
	\date{\today}
	
	\dedicatory{Dedicated to the memory of Avinoam Mann.}
	\begin{abstract}
		For a prime number $\ell$ we introduce and study oriented right-angled Artin pro-$\ell$ groups $G_{\Gamma,\lambda}$(oriented pro-$\ell$ RAAGs for short) associated to a finite oriented graph $\Gamma$ and a continuous group homomorphism $\lambda\colon\Z_\ell\to\Z_\ell^\times$.
		We show (cf. Thm.~\ref{thm:main}) that an oriented pro-$\ell$ RAAG $G_{\Gamma,\lambda}$
		is a Bloch-Kato pro-$\ell$ group if, and only if, 
		$(G_{\Gamma,\lambda},\theta_{\Gamma,\lambda})$ is an oriented pro-$\ell$ group of elementary type
		generalizing a recent result of I. Snopche and P.~Zalesski\u i (cf. \cite{SZ:RAAGs}). 
		Here $\theta_{\Gamma,\lambda}\colon G_{\Gamma,\lambda}\to\Z_p^\times$ denotes the canonical $\ell$-orientation on 
		$G_{\Gamma,\lambda}$.
		We invest some effort in order to show that oriented right-angled Artin pro-$\ell$ groups share many properties with 
		right-angled Artin pro-$\ell$-groups or even discrete RAAG's, e.g., if $\Gamma$ is a specially oriented chordal graph, then
		$G_{\Gamma,\lambda}$ is coherent (cf. Thm.~\ref{thm:chordal}(ii)) generalizing a result of C.~Droms 
		(cf.~\cite{droms:coh}) . Moreover, in this case $(G_{\Gamma,\lambda},\theta_{\Gamma,\lambda})$ has the Positselski-Bogomolov property 
		(cf. Thm.~\ref{thm:chordal}(i)) generalizing a result of
		H.~Servatius, C.~Droms and  B.~Servatius for discrete RAAG's (cf.~\cite{serv}).
		If $\Gamma$ is a specially oriented chordal graph and $\image(\lambda)\subseteq 1+4\Z_2$ in case that $\ell=2$, then $H^\bullet(G_{\Gamma,\lambda},\F_\ell)
		\simeq \Lambda^\bullet(\ddot{\Gamma}^{\op})$ (cf. Thm.~\ref{thm:chordal}(iii)) generalizing a well known result of M.~Salvetti (cf.~\cite{salv:coh}).
	\end{abstract}

	\subjclass[2010]{Primary 12G05; Secondary 20E18, 20J06, 12F10}
	
	\keywords{Maximal pro-$\ell$ Galois groups, Galois cohomology, Bloch-Kato pro-$\ell$ groups, oriented pro-$\ell$ groups, the Bogomolov-Positselski
		property, right-angled Artin pro-$\ell$ groups}

	\maketitle

	\section{Introduction}
	\label{sec:intro}
	
	In a recent paper (see \cite{SZ:RAAGs}) I. Snopce and P.A.~Zalesski\u i showed that for pro-$\ell$ completions of right-angled Artin groups, 
	the following four properties are equivalent: the Bloch-Kato property, 1-cyclotomicity, the elementary type property, and the realizability as a maximal pro-$\ell$ Galois group .
	This theorem can be seen as a pro-$\ell$ analogue of a result of C.~Droms (see \cite{droms:sub}) for discrete right-angled Artin groups, and provides evidence to the conjecture that a finitely generated cyclotomically oriented 
	Bloch-Kato pro-$\ell$ group $(G,\theta)$ is necessarily of elementary type (see \cite[\S~7.5]{qw:cyclotomic}), and therefore to the {\sl Elementary Type Conjecture} formulated by I. Efrat in \cite{efrat:etc1}.
	
	In this paper we generalize the concept of right-angled Artin pro-$\ell$ group to non-necessarily trivially oriented examples.  
	
	By an {\sl oriented graph} $\Gamma=(\euV,\euE)$ we will understand a non-empty {\sl set of vertices} $\euV$ together with a {\sl set of edges} 
	
	$$\euE\subseteq\euV\times\euV\smallsetminus\Delta(\euV),\qquad\text{where }\Delta(\euV)=\{\:(v,v)\:\mid\: v\in\euV\:\}.$$
	
	Thus, for an edge $\eue=(v,w)\in\euE$, its inverse $\beue=(w,v)$ might be contained in $\euE$ (in which case $\eue$ will be said to be an {\sl ordinary edge}), or not (in which case $\eue$ will be said to be {\sl special}). Hence $\euE$ is the disjoint union $\euE_\mathrm{o}\sqcup\euE_\mathrm{s}$ of the sets of ordinary edges and special edges.
	
	In a similar fashion, $\euV$ is the disjoint union $\euV_\mathrm{o}\sqcup\euV_\mathrm{s}$ of the sets of {\sl ordinary vertices} and {\sl special vertices}, satisfying the following condition: the origin (i.e., the first coordinate) of every edge is an ordinary vertex, i.e., a vertex can be special only if either it is isolated or it is the end of a special edge (see \S~\ref{ssec:orientedgraphs} below). 
	
	For an oriented graph $\Gamma=(\euV,\euE)$ and a continuous homomorphism $\lambda\colon \Z_{\ell}\to\Z_{\ell}^\times$
	we call the pro-$\ell$ group $G_{\Gamma,\lambda}$ given by the presentation
	\begin{equation}	\label{def:orRAAG}
		G_{\Gamma,\lambda}=\left\langle\: v\in\euV\:\mid\: \forall\:\eue=(v,w)\in\euE,\: 
		wvw^{-1}=\begin{cases} v &\text{if }\eue\in\euE_\mathrm{o}, \\ v^{\lambda(1)} & \text{if }\eue\in\euE_\mathrm{s}
		\end{cases} \:\right\rangle_{\mathrm{pro-}{\ell}}
	\end{equation}
	the {\sl oriented right-angled Artin pro-$\ell$ group} (oriented pro-$\ell$ RAAG for short) associated to $\Gamma$ and $\lambda$.
	By definition, every oriented right-angled Artin pro-$\ell$ group $G_{\Gamma,\lambda}$ carries the orientation 
	$\theta_{\Gamma,\lambda}\colon G_{\Gamma,\lambda}\to \Z_{\ell}^\times$ given by $\theta_{\Gamma,\lambda}(v)=1$ if $v\in\euV_\mathrm{o}$, and $\theta_{\Gamma,\lambda}(v)=\lambda(1)$ if $v\in\euV_\mathrm{s}$.
	Thus, if $\Gamma$ is an oriented graph without special edges, then $G_{\Gamma,\lambda}$ is just a right-angled Artin pro-$\ell$ group, and $\theta_{\Gamma,\lambda}=\bone$ is the constant $1$-function.
	In spite of the rather elementary presentation \eqref{def:orRAAG}, oriented pro-$\ell$ RAAGs yield surprising variety and flexibility --- for example, among oriented pro-$\ell$ RAAGs one finds free pro-$\ell$ groups, $\ell$-adic analytic groups, and even finite $\ell$-groups.
	
	Our main goal is to prove the following  ``oriented analogue'' of \cite[Thm.~1.2, Thm.~1.5]{SZ:RAAGs}.
	
	\begin{thm}\label{thm:main}
		Let $\Gamma$ be an oriented graph, let $\lambda\colon \Z_{\ell}\to\Z_{\ell}^\times$ be a continuous homomorphism {\rm(}satisfying $\image(\lambda)\subseteq1+4\Z_2$ if $\ell=2${\rm)}, and let $(G_{\Gamma,\lambda},\theta_{\Gamma,\lambda})$ be the oriented pro-$\ell$ RAAG associated to $\Gamma$ and $\lambda$. The following are equivalent.
		\begin{itemize}
			\item[(0)] The oriented graph $\Gamma$ is of elementary type 
			(see Definition~\ref{defin:ETgraphs}).
			\item[(i)] The pro-$\ell$ group $G_{\Gamma,\lambda}$ is isomorphic to the maximal pro-$\ell$ Galois group of a field containing a primitive $\ell$-th root of 1 {\rm(}and also $\sqrt{-1}$ if $\ell=2${\rm)}.
			\item[(ii)] The pro-$\ell$ group $G_{\Gamma,\lambda}$ is a Bloch-Kato pro-$\ell$ group.
			\item[(iii)] The oriented pro-$\ell$ RAAG $(G_{\Gamma,\lambda},\theta_{\Gamma,\lambda})$ is a 1-cyclotomic oriented pro-$\ell$ group.
			\item[(iv)] The oriented pro-$\ell$ RAAG $(G_{\Gamma,\lambda},\theta_{\Gamma,\lambda})$ is an oriented pro-$\ell$ group of elementary type.
			\item[(v)] Every finitely generated closed subgroup of $G_{\Gamma,\lambda}$ is again isomorphic to an oriented pro-$\ell$ RAAG.
		\end{itemize} 
	\end{thm}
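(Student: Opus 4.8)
The plan is to prove the equivalence by running the cycle $(0)\Rightarrow(\mathrm{iv})\Rightarrow(\mathrm{i})\Rightarrow(\mathrm{ii})\Rightarrow(0)$, supplementing it with $(\mathrm{iv})\Rightarrow(\mathrm{iii})\Rightarrow(\mathrm{ii})$ to fold in $1$-cyclotomicity and with $(0)\Rightarrow(\mathrm{v})\Rightarrow(0)$ to fold in the subgroup condition; one checks that these arrows already make the implication graph strongly connected.

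The implications issuing from $(0)$ are structural and proceed by induction on the elementary-type construction of $\Gamma$. First I would set up a dictionary between the two operations generating elementary-type oriented graphs --- disjoint union and coning a dominating vertex $v$ (through ordinary or special edges) onto a subgraph $\Delta$ --- and the two operations generating oriented pro-$\ell$ groups of elementary type, namely the free pro-$\ell$ product and the cyclotomic semidirect extension by $\Z_\ell$. Reading off the presentation \eqref{def:orRAAG}, one verifies $G_{\Gamma_1\sqcup\Gamma_2,\lambda}\cong G_{\Gamma_1,\lambda}\amalg G_{\Gamma_2,\lambda}$ and that coning yields $G_{\Delta,\lambda}\rtimes\langle v\rangle$, with the orientation $\theta_{\Gamma,\lambda}$ restricting compatibly in each case; the induction then gives $(0)\Rightarrow(\mathrm{iv})$. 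The arrows $(\mathrm{iv})\Rightarrow(\mathrm{i})$ and $(\mathrm{iv})\Rightarrow(\mathrm{iii})$ follow by the same induction, once one knows that realizability as a maximal pro-$\ell$ Galois group and $1$-cyclotomicity hold for the building blocks ($\Z_\ell$ and the Demushkin-type pieces, where the hypothesis $\image(\lambda)\subseteq1+4\Z_2$ for $\ell=2$ is used) and are inherited under free products and cyclotomic extensions of oriented pro-$\ell$ groups.

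The two remaining general arrows are external inputs: $(\mathrm{i})\Rightarrow(\mathrm{ii})$ is the Bloch--Kato property of the maximal pro-$\ell$ Galois group of a field containing a primitive $\ell$-th root of unity --- a consequence of the norm-residue (Voevodsky--Rost) theorem together with the fact that closed subgroups of such groups are again of the same kind --- while $(\mathrm{iii})\Rightarrow(\mathrm{ii})$ is the standard implication that a $1$-cyclotomic oriented pro-$\ell$ group is Bloch--Kato. For $(0)\Rightarrow(\mathrm{v})$ I would invoke the subgroup theory of elementary-type oriented pro-$\ell$ groups (Kurosh-type decompositions for free products and the explicit subgroup structure of the twisted pieces) to see that finitely generated closed subgroups are again oriented pro-$\ell$ RAAGs.

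The heart of the argument, and the step I expect to be the main obstacle, is the converse $(\mathrm{ii})\Rightarrow(0)$ together with its twin $(\mathrm{v})\Rightarrow(0)$; both I would prove by contraposition via a forbidden-subgraph analysis. The first task is to identify the minimal oriented graphs $\Gamma_0$ that fail to be of elementary type --- in the unoriented case these are the full square $C_4$ and the full path $P_4$, so that elementary type amounts to trivial perfection, and in the oriented case one must add the configurations created by special edges --- and to show every non-elementary-type $\Gamma$ contains such a $\Gamma_0$ as a full subgraph. Here the structural input is that a full subgraph $\Gamma_0\subseteq\Gamma$ induces a retraction $G_{\Gamma,\lambda}\twoheadrightarrow G_{\Gamma_0,\lambda}$ splitting the inclusion, so $G_{\Gamma_0,\lambda}$ is a closed subgroup; since both the Bloch--Kato property and property (v) pass to closed subgroups, it suffices to exhibit for each forbidden $\Gamma_0$ a single finitely generated closed subgroup of $G_{\Gamma_0,\lambda}$ that is simultaneously not Bloch--Kato and not an oriented RAAG, refuting (ii) and (v) at one stroke. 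For $C_4$ this is the classical $F\times F$ situation: the kernel of a suitable map $G_{C_4,\lambda}\to\Z_\ell$ is a finitely generated Stallings--Bieri subgroup that is not of type $FP_2$, hence has infinite-dimensional $H^2(-,\F_\ell)$ and so possesses neither quadratic cohomology nor a finite presentation. The genuinely new difficulty lies in the oriented configurations, where one must carry out the analogous computation inside the Demushkin-flavoured groups attached to special edges and verify that some explicit finitely generated closed subgroup again has non-quadratic $\F_\ell$-cohomology; this twisted cohomological bookkeeping is, I expect, where the real work of the theorem is concentrated.
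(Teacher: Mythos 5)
Your overall architecture (structural implications out of (0)/(iv), external inputs for (i)$\Rightarrow$(ii), and a forbidden-subgraph contrapositive for the converse directions) matches the paper's, but there are two genuine gaps. The first is the arrow (iii)$\Rightarrow$(ii): the claim that a $1$-cyclotomic oriented pro-$\ell$ group is Bloch--Kato is \emph{not} a standard implication --- it is precisely the (open) pro-$\ell$ Smoothness Conjecture of De~Clercq--Florence (Conjecture~\ref{conj:smoothness}). With that arrow removed, (iii) has no outgoing edge in your implication graph and is not tied into the equivalence. The paper instead proves (iii)$\Rightarrow$(iv) directly: after reducing to specially oriented $\Gamma$ via Theorem~\ref{thm:cyc pRAAG}, it shows by a bespoke computation (Proposition~\ref{prop:Lambdas nocyc}) that $(G_{\Lambda_{\mathrm{s}},\lambda},\theta_{\Lambda_{\mathrm{s}},\lambda})$ admits a subgroup $V$ with a quotient $V/N$ that is not Kummerian, hence is not $1$-cyclotomic; this is genuinely separate work from the Bloch--Kato analysis and cannot be replaced by an appeal to ``$1$-cyclotomic $\Rightarrow$ Bloch--Kato''.

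The second gap is the structural mechanism in your contrapositive: the claim that an induced subgraph $\Gamma_0\subseteq\Gamma$ induces a split retraction $G_{\Gamma,\lambda}\twoheadrightarrow G_{\Gamma_0,\lambda}$ is false in the oriented setting. If $\eue=(v,w)$ is a special edge with $v\in\euV(\Gamma_0)$ and $w\notin\euV(\Gamma_0)$, killing $w$ imposes $v^{\lambda(1)-1}=1$ on the quotient, which is not a relation of $G_{\Gamma_0,\lambda}$; already for $\Gamma=\Lambda_{\mathrm{s}}$ and $\Gamma_0$ the two non-adjacent ordinary vertices, the would-be retract is free of rank $2$ but the quotient is not. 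Consequently, realizing copies of $G_{\Gamma',\lambda'}$ for $\Gamma'\in\{\Lambda_{\mathrm{s}},\mathrm{C}_4,\mathrm{L}_3\}$ inside $G_{\Gamma,\lambda}$ is a substantial step, not a formality: the paper proves freeness of the relevant $2$-generated subgroups via Mel\cprime nikov's subgroup theorem for pro-$\ell$ amalgams (Lemma~\ref{prop:Lambda2}) and then, in Proposition~\ref{prop:badgraphs badgroups}, generates the embedded copies not by the vertices themselves but by carefully chosen elements of $\kernel(\theta_{\Gamma,\lambda})$ (e.g.\ commutators $[v_1,v_2]$, $[v_3,v_4]$ in the $\mathrm{L}_3$ case). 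Your identification of the forbidden configurations and your witness for $\mathrm{C}_4$ (a Stallings--Bieri-type kernel, as in Snopce--Zalesski\u{\i}) are on target, and you correctly anticipate that the twisted configuration $\Lambda_{\mathrm{s}}$ carries the real cohomological work (the paper's Proposition~\ref{thm:lambdas} exhibits a finitely generated subgroup $H$ with non-surjective cup product via pro-$\ell$ Bass--Serre theory); but as written the proposal both relies on an open conjecture and on a retraction that does not exist.
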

	
	Note that condition~(0) in Theorem~\ref{thm:main} gives a combinatorial (and rather immediate) criterion to check whether an oriented pro-$\ell$ RAAG may occur as a maximal pro-$\ell$ Galois group or not.	
	On one handside, Theorem~\ref{thm:main} provides a plethora of brand new examples of pro-$\ell$ groups which do not occur as maximal pro-$\ell$ Galois groups.
	It is worth mentioning that the relations involved in the presentation of an oriented pro-$\ell$ RAAG are elementary --- just an elementary commutator times, possibly, the $\lambda(1)$-th power of a generator ---, especially in comparison with other examples of pro-$\ell$ groups which are known not to occur as maximal pro-$\ell$ Galois groups (see, e.g., \cite{BLMS,cq:noGal}, \cite[\S~9]{cem}, \cite[\S~7]{MT:Gal}), whose presentations require higher commutators.
	
	On the other hand, considering the richness of the family of oriented pro-$\ell$ RAAGs, Theorem~\ref{thm:main} corroborates the Elementary Type Conjecture, and the {\sl Smoothness Conjecture}, formulated by Ch.~De~Clercq and M.~Florence (see \cite{dcf:lift}).
	
	In fact, there are several other conjectures concerning the structure of the maximal pro-$\ell$ Galois group $G_{\K}(\ell)$ of a field $\K$ containing a primitive $\ell$-th root of 1, and its $\F_\ell$-cohomology, e.g., L.~Positselski's formulation of {\sl Bogomolov Conjecture}, which predicts that $G_{\K}(\ell)$ is a free-by-locally uniform pro-$\ell$ group (see \cite{pos:k} and \cite{qw:bogomolov}); J.~Mina\v{c} and N.D.~T\^an's {\sl Massey Vanishing Conjecture}, which predicts that $G_{\K}(\ell)$ satisfies the $n$-Massey product vanishing property for every $n>2$ (see \cite{MT:conj}); and the {\sl Universal Koszulity Conjecture}, formulated by J.~Mina\v{c} et al., which predicts that $\bfH^\bullet(G_{\K}(\ell),\F_\ell)$ is a universally Koszul algebra (see \cite{MPPT}).
	These three properties are satisfied by oriented pro-$\ell$ groups of elementary type (see respectively \cite[Thm.~1.2]{qw:bogomolov}, \cite[Thm.~1.3]{cq:massey}, and \cite[\S~1]{MPPT}).
	Therefore, from Theorem~\ref{thm:main} one concludes the following
	providing evidence for the three aforementioned conjectures.
	
	\begin{cor}\label{cor:main}
		Let $\Gamma$ be an oriented graph and let $\lambda\colon \Z_{\ell}\to\Z_{\ell}^\times$ be a continuous homomorphism {\rm(}satisfying $\image(\lambda)\subseteq1+4\Z_2$ if $\ell=2${\rm)}.
		If $G_{\Gamma,\lambda}\simeq G_{\K}(\ell)$ for some field $\K$ containing a primitive $\ell$-th root of 1, then
		\begin{itemize}
			\item[(i)] $G_{\Gamma,\lambda}$ has the Bogomolov-Positselski property, i.e., it is the Frattini pro-$\ell$ cover of a locally uniform pro-$\ell$ group with free pro-$\ell$ kernel;
			\item[(ii)] for every $n>2$, $G_{\Gamma,\lambda}$ satisfies the $n$-Massey product vanishing property, i.e., every non-empty $n$-fold Massey product associated to an $n$-tuple of elements of $\rmH^1(G_{\Gamma,\lambda},\F_\ell)$ contains 0;
			\item[(iii)] the algebra $\bfH^\bullet(G_{\Gamma,\lambda},\F_\ell)$ is universally Koszul.
		\end{itemize}
	\end{cor}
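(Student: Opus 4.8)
The plan is to observe that Corollary~\ref{cor:main} is a formal consequence of Theorem~\ref{thm:main}, which carries all the real content. The standing hypothesis is precisely condition~(i) of that theorem, namely that $G_{\Gamma,\lambda}$ is realized as the maximal pro-$\ell$ Galois group of a field containing a primitive $\ell$-th root of unity (and $\sqrt{-1}$ when $\ell=2$, which is guaranteed by the assumption $\image(\lambda)\subseteq 1+4\Z_2$ in that case). First I would invoke the implication $(\mathrm{i})\Rightarrow(\mathrm{iv})$ of Theorem~\ref{thm:main} to deduce that the oriented pro-$\ell$ RAAG $(G_{\Gamma,\lambda},\theta_{\Gamma,\lambda})$ is an oriented pro-$\ell$ group of elementary type. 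This single reduction is the hinge of the whole argument.

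Once the elementary type property is in hand, each of the three conclusions follows by quoting the corresponding known theorem about oriented pro-$\ell$ groups of elementary type. For part~(i), the Bogomolov--Positselski property is proved for such groups in \cite[Thm.~1.2]{qw:bogomolov}; for part~(ii), the $n$-Massey product vanishing property for all $n>2$ is established in \cite[Thm.~1.3]{cq:massey}; and for part~(iii), universal Koszulity of the algebra $\bfH^\bullet(G_{\Gamma,\lambda},\F_\ell)$ is recorded in \cite[\S~1]{MPPT}. In each case the only thing to verify is that the hypotheses of the cited result are met by $(G_{\Gamma,\lambda},\theta_{\Gamma,\lambda})$, which reduces to the elementary type property together with the standing constraint on $\lambda$.

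Since the substantive mathematics resides entirely in Theorem~\ref{thm:main} and in the three external inputs, I do not expect a genuine obstacle at the level of the corollary itself; the one point requiring care is bookkeeping. Specifically, I would confirm that the canonical orientation $\theta_{\Gamma,\lambda}$ and the restriction $\image(\lambda)\subseteq 1+4\Z_2$ at $\ell=2$ are exactly the data demanded by each cited statement --- in particular that the cyclotomic normalization used in \cite{qw:bogomolov,cq:massey,MPPT} coincides with the orientation carried by $G_{\Gamma,\lambda}$. I would therefore devote the proof to making these identifications explicit and otherwise let the quoted results do the work.
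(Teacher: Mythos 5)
Your proposal is correct and follows exactly the paper's own route: the paper derives the corollary from the implication (i)\,$\Rightarrow$\,(iv) of Theorem~\ref{thm:main} and then quotes \cite[Thm.~1.2]{qw:bogomolov}, \cite[Thm.~1.3]{cq:massey} and \cite[\S~1]{MPPT} for the three properties of oriented pro-$\ell$ groups of elementary type, which is precisely your argument. The bookkeeping point you flag about the orientation and the normalization at $\ell=2$ is the only delicate issue, and the paper treats it no more explicitly than you do.
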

	
	Subsequently, we will focus on oriented pro-$\ell$ RAAGs associated to {\sl chordal} oriented graphs.
	
	Recall that an oriented graph $\Gamma$ is said to be chordal if it does not contain cycles other than triangles as induced subgraphs. 
	
	Moreover, an oriented graph will be said to be {\sl specially oriented} if the terminus of a special edge is always a special vertex (see Definition~\ref{defi:specialgraphs}). Note that an oriented graph of elementary type is always specially oriented and chordal, but not vice-versa.
	
	For oriented pro-$\ell$ RAAGs associated to chordal specially oriented graphs we prove the following.
	
	\begin{thm}\label{thm:chordal}
		Let $\Gamma$ be a chordal specially oriented graph, let $\lambda\colon \Z_{\ell}\to\Z_{\ell}^\times$ be a continuous homomorphism {\rm(}satisfying $\image(\lambda)\subseteq1+4\Z_2$ if $\ell=2${\rm)}, and let $(G_{\Gamma,\lambda},\theta_{\Gamma,\lambda})$ be the oriented pro-$\ell$ RAAG associated to $\Gamma$ and $\lambda$.
		Then:
		\begin{itemize}
			\item[(i)] $G_{\Gamma,\lambda}$ has the Bogomolov-Positselski property;
			\item[(ii)] every finitely generated closed subgroup of $G_{\Gamma,\lambda}$ is of type $FP_\infty$. In particular, $G$ is coherent;
			\item[(iii)] the $\F_\ell$-cohomology algebra $\bfH^\bullet(G_{\Gamma,\lambda},\F_\ell)$ is quadratic --- in particular, it is isomorphic to the exterior Stanley-Reisner $\F_\ell$-algebra $\bfLam_\bullet(\ddot\Gamma^{\op})$ associated to $\ddot\Gamma^{\op}$.
			
		\end{itemize}
	\end{thm}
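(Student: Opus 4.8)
The plan is to argue by induction on the cardinality of $\euV$, exploiting the recursive structure of chordal graphs: every finite chordal graph admits a \emph{simplicial vertex} $v$, i.e.\ a vertex whose link $L=\mathrm{link}(v)$ is a clique, and $\Gamma\smallsetminus\{v\}$ is again chordal (equivalently, $\Gamma$ carries a perfect elimination ordering). First I would record a small combinatorial lemma showing that, for a \emph{specially} oriented chordal graph, one may choose such a $v$ so that the induced subgraphs on $L$ and on the star $S=L\cup\{v\}$, as well as on $\Gamma'=\Gamma\smallsetminus\{v\}$, are again chordal and specially oriented; this is exactly where the hypothesis that the terminus of every special edge be a special vertex is used, as it guarantees that the orientation restricts coherently and that the induction stays inside the relevant class.

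The geometric heart of the argument is then the decomposition of $G_{\Gamma,\lambda}$ as a \emph{proper amalgamated free pro-$\ell$ product}
\[
G_{\Gamma,\lambda}\;\cong\;G_{\Gamma',\lambda}\amalg_{G_{L,\lambda}}G_{S,\lambda},
\]
where $G_{L,\lambda}$ and $G_{S,\lambda}$ are the oriented pro-$\ell$ RAAGs supported on $L$ and $S$, and the inclusion $G_{L,\lambda}\hookrightarrow G_{S,\lambda}$ is a retract. Since $S$ is a clique, $G_{S,\lambda}$ is an elementary building block --- an iterated cyclotomic (semidirect) extension of procyclic pro-$\ell$ groups, hence a solvable $\ell$-adic analytic, and in particular locally uniform, pro-$\ell$ group --- which furnishes the base case. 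I would verify directly that cliques satisfy (i), (ii) and (iii): a locally uniform pro-$\ell$ group is its own Frattini cover with trivial free kernel, its finitely generated closed subgroups are again $\ell$-adic analytic and so of type $FP_\infty$, and its $\F_\ell$-cohomology is an exterior algebra on the vertex classes --- the Stanley--Reisner algebra of a simplex. For this last point at $\ell=2$ the hypothesis $\image(\lambda)\subseteq1+4\Z_2$ is essential: it forces the ``$(-1)$-class'' to vanish in $\rmH^1(\argu,\F_2)$, so that each degree-one generator squares to zero and the cohomology is genuinely exterior rather than polynomial.

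For the inductive step I would show that each of the three properties is inherited through a proper amalgam over the edge group $G_{L,\lambda}$, which is free abelian (or cyclotomic) and locally uniform. For (i) the Bogomolov--Positselski property is stable under such amalgamations, since the amalgam of Frattini covers of locally uniform groups along a locally uniform retract is again a Frattini cover of a locally uniform group with free pro-$\ell$ kernel. For (iii) I would run the Mayer--Vietoris sequence in $\F_\ell$-cohomology attached to the amalgam; because $G_{L,\lambda}$ is a cohomological retract of both factors the connecting maps vanish, the sequence splits into short exact pieces, and the ring structure assembles multiplicatively into the exterior Stanley--Reisner algebra $\bfLam_\bullet(\ddot\Gamma^{\op})$. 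Quadraticity is then formal, as exterior Stanley--Reisner algebras are quadratic by construction (relations concentrated in degree two, one for each non-edge of $\ddot\Gamma^{\op}$).

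The main obstacle I expect is part (ii), the coherence and $FP_\infty$ statement, which is the genuinely delicate generalization of Droms' theorem. Here I would invoke the pro-$\ell$ theory of groups acting on pro-$\ell$ trees: a finitely generated closed subgroup $H\leq G_{\Gamma,\lambda}$ acts on the standard pro-$\ell$ tree of the amalgam, and by the subgroup-structure results of Ribes--Zalesskii it decomposes as the fundamental pro-$\ell$ group of a \emph{finite} graph of pro-$\ell$ groups whose vertex and edge groups are finitely generated closed subgroups of conjugates of $G_{\Gamma',\lambda}$, $G_{S,\lambda}$ and $G_{L,\lambda}$. The technical heart is to verify that the edge groups so produced are again free abelian (or cyclotomic) pro-$\ell$ groups of type $FP_\infty$, and that the graph of groups is finite, so that the $FP_\infty$ property propagates through the standard Mayer--Vietoris argument for graphs of groups; unlike in the discrete setting, the requisite splittings of finitely generated pro-$\ell$ subgroups are not automatic, and controlling the induced edge groups along the induction is where the real work lies.
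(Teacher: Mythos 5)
For parts (i) and (iii) your plan is essentially the paper's: decompose $G_{\Gamma,\lambda}$ as an amalgamated free pro-$\ell$ product over a clique subgroup (the paper uses the general chordal decomposition of Proposition~\ref{prop:chordalgraphs}, i.e.\ a patching of two chordal induced subgraphs along a common clique, rather than a simplicial vertex, but this is an inessential variant), establish properness, and induct, with complete specially oriented graphs (locally uniform groups) as the base case. One caveat: properness of the amalgam is not deduced in the paper from the retract $G_{L,\lambda}\hookrightarrow G_{S,\lambda}$ alone --- a retract on one side does not by itself give an embedding of the \emph{other} factor. The paper verifies the Ribes--Zalesski\u{\i} criterion by showing that the Frattini series of the clique subgroup is induced from the ambient factors (Lemma~\ref{lem:amalg}, which passes through the locally uniform quotient $G/K_{\theta_{\Gamma,\lambda}}(G)$). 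For (iii) the multiplicative assembly you gesture at is made precise by combining the Mayer--Vietoris splitting with the quadraticity criterion of \cite[Thm.~B]{qsv:quadratic} and the degree-$\le 2$ identification \eqref{eq:H raag}; for (i) the inductive step is \cite[Thm.~5.5]{qw:bogomolov}, as you anticipate.

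Part (ii) is where your proposal has a genuine gap, and you have correctly located it yourself: for a finitely generated \emph{closed} (not open) subgroup $H$ of a proper pro-$\ell$ amalgam there is no available analogue of the Bass--Serre/Ribes--Zalesski\u{\i} structure theory producing a decomposition of $H$ as the fundamental pro-$\ell$ group of a \emph{finite} graph of finitely generated pro-$\ell$ groups; Theorem~\ref{thm:zalmel} applies only to open subgroups, and controlling the induced edge groups along the induction is not just ``the real work'' but an unresolved obstruction on this route. The paper avoids trees entirely: it uses part (i) to produce the extension $1\to K\to H\to Q\to 1$ with $K=K_{\theta_{\Gamma,\lambda}}(G)\cap H$ a \emph{free} pro-$\ell$ group and $Q=HK_{\theta_{\Gamma,\lambda}}(G)/K_{\theta_{\Gamma,\lambda}}(G)$ locally uniform, and then applies King's homological finiteness criterion: $H$ is of type $FP_\infty$ if and only if each $H_n(K,\Z_\ell)$ is finitely generated over $\Z_\ell\dbl Q\dbr$, which holds because $H_1(K,\Z_\ell)$ is finitely generated (as $H$ is) and $H_n(K,\Z_\ell)=0$ for $n\ge 2$ by freeness of $K$. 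This is the key idea missing from your proposal; without it, or some substitute for the subgroup structure theorem over pro-$\ell$ trees, your argument for (ii) does not close.
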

	
	Note that Theorem~\ref{thm:chordal}--(i) can be seen as an oriented pro-$\ell$ analogue of a theorem of H.~Servatius, C.~Droms and B.~Servatius (see \cite{serv}). One may speculate whether for a
	generalized pro-$\ell$ RAAG's $G_{\Gamma,\lambda}$ based on a specially oriented graph $\Gamma$
	the Bogomolov-Positselski property of $(G_{\Gamma,\lambda},\theta_{\Gamma,\lambda})$ is equivalent to the
	chordality of $\Gamma$.
	Theorem~\ref{thm:chordal}--(ii) can be seen as an oriented pro-$\ell$ analogue of another theorem of C.~Droms (see \cite{droms:coh}). As before one is tempted to believe that the coherence property of $G_{\Gamma,\lambda}$
	for a specially oriented graph $\Gamma$ is equivalent to the
	chordality of $\Gamma$. 
	Theorem~\ref{thm:chordal}--(iii) can be seen as an oriented analogue of a result of M.~Salvetti on discrete RAAG's
	(cf.~\cite{salv:coh}) which holds independently of the chordality of the underlying graph. Therefore, one may conjecture
	that Theorem~\ref{thm:chordal}--(iii) holds for all $G_{\Gamma,\lambda}$ based on specially oriented graphs $\Gamma$ (see \ref{q:cring} below).
	
	One reason why it is interesting to investigate right-angled Artin pro-$\ell$ groups $G_\Gamma$ (with $\Gamma$ a graph without special vertices and edges) in a Galois-theoretic context
	is the fact that  $\bfH^\bullet(G_{\Gamma},\F_\ell)$ is isomorphic to the exterior Stanley-Reisner $\F_\ell$-algebra $\bfLam_{\bullet}(\Gamma^{\op})$ associated to $\Gamma$, and thus it is quadratic and Koszul (see, e.g., \cite[\S~1.2]{papa}).
	Moreover, one has that the {\sl $\N_0$-graded $\F_\ell$-group algebra} $\gr_\bullet(G_\Gamma)$ 
	associated to the augmentation filtration
	is isomorphic to the Cartier-Foita $\F_{\ell}$-algebra $\mathbf{R}_\Gamma$ associated to $\Gamma$, and thus it is quadratic and Koszul as well.
	In particular, it is the {\sl quadratic dual} of $\bfH^\bullet(G_{\Gamma},\F_\ell)$ (see \cite[Thm.~1.2]{bartholdi}).
	
	The same phenomenon occurs for oriented pro-$\ell$ groups $(G,\theta)$ of elementary type: if $G$ is such a pro-$\ell$ group, then both algebras $\bfH^\bullet(G,\F_{\ell})$ and $\gr_\bullet(G)$ are Koszul and quadratic dual to each other (see \cite[Thm.~A--B]{MPQT}).
	Moreover, the same phenomenon is conjectured to hold for finitely generated maximal pro-$\ell$ Galois groups of fields containing a primitive $\ell$-th root of 1, see \cite[Conj.~1.3 and Ques.~1.5]{MPQT}. 
	We suspect that the same holds also for oriented pro-$\ell$ RAAGs associated to specially oriented graphs. Altogether we ask the following.
	
	\begin{question} \label{q:cring}
		Let $\Gamma$ be a specially oriented graph, let $\lambda\colon \Z_{\ell}\to\Z_{\ell}^\times$ be a continuous homomorphism {\rm(}satisfying $\image(\lambda)\subseteq1+4\Z_2$ if $\ell=2${\rm)}, and let $(G_{\Gamma,\lambda},\theta_{\Gamma,\lambda})$ be the oriented pro-$\ell$ RAAG associated to $\Gamma$ and $\lambda$.
		\begin{enumerate}
			\item Is it true that the $\N_0$-graded $\F_\ell$-group algebra $\gr_\bullet(G_{\Gamma,\lambda})$ is quadratic --- and thus isomorphic to the Cartier-Foita $\F_\ell$-algebra associated to $\Gamma$?
			\item Is it true that the cohomology ring $\bfH^\bullet(G_{\Gamma,\lambda},\F_{\ell})$ is isomorphic to the exterior Stanley-Reisner $\F_{\ell}$-algebra $\bfLam_{\bullet}(\ddot\Gamma^{\op})$?
		\end{enumerate}
	\end{question}

	{\small{ \subsection*{Acknowledgments} The authors wish to warmly thank Ido~Efrat, Ilir~Snopce, Matteo~Vannacci and Pavel~A.~Zalesski\u{i} for several helpful discussions.
	}}
	
	
	\section{Graphs}
	\label{sec:graphs}

	Although there is no standard notion of a {\sl graph}, all notions known to the authors
	have one common feature: a graph consists of a pair of sets $(\euV,\euE)$,  {\sl a set of vertices} $\euV$,
	and {\sl a set of edges} $\euE$.
	In this paper we make use of two different notions --- {\sl na\"ive} graphs and {\sl oriented} graphs ---, which are now discussed in more detail.
	
	
	\subsection{Na{\"i}ve graphs}\label{ssec:naivegraphs}
	
	A {\sl na\"ive graph} $\Gamma=(\euV,\euE)$ consists of a non-empty set of {\sl vertices} $\euV$ and a set of {\sl edges} $\euE\subseteq \euP_2(\euV)$, where $\euP_2(\euV)$ denotes the set of subsets of $\euV$ of cardinality $2$.
	The na\"ive graph $\Gamma=(\euV,\euE)$ satisfying $\euE=\euP_2(\euV)$ is said to be a {\sl complete graph}. 
	Subgraphs are defined in the obvious way, and a subgraph $\Lambda=(\euV^\prime,\euE^\prime)$ of the na\"ive graph $\Gamma=(\euV,\euE)$ is said to be induced, if
	\begin{equation}
		\label{eq:prop}
		\euE^\prime=\euE\cap\euP_2(\euV').
	\end{equation}
	Moreover, the subgraph is said to be proper if $\euV'\subsetneq\euV.$
	
	Henceforth, we will always consider finite na\"ive graphs, i.e., na\"ive graphs with a finite number of vertices.

	Recall that a {\sl tree} is a connected na\"ive graph $\mathrm{T}=(\euV,\euE)$, $\euV\neq\varnothing$, without circuits as subgraphs (cf. \cite[\S~2.2]{serre:trees}).
	Every connected nai\"ve graph $\Gamma=(\euV,\euE)$ contains a {\sl maximal subtree}, i.e., a tree $\mathrm{T}=(\euV_\mathrm{T},\euE_\mathrm{T})$ which is a subgraph of $\Gamma$ such that $\euV_\mathrm{T}=\euV$ (cf. \cite[Prop.~2.11]{serre:trees}).

	\begin{exam}\label{ex:C4L3}\rm
		The na\"ive graphs $\mathrm{C}_4$ and $\mathrm{L}_3$ are respectively the na\"ive graphs with geometric realization
		\[
		\xymatrix@R=0.5pt{v_1 && v_2 \\ \bullet\ar@{-}[rr]\ar@{-}[dddd] && \bullet \\ \\ \\
			\\ \bullet && \bullet\ar@{-}[uuuu]\ar@{-}[ll] \\ v_4 && v_3}
		\qquad\text{and}\qquad
		\xymatrix@R=0.5pt{v_1 && v_4 \\ \bullet\ar@{-}[dddd] && \bullet \\ \\ \\
			\\ \bullet && \bullet\ar@{-}[uuuu]\ar@{-}[ll] \\ v_2 && v_3}
		\]
		i.e., $\mathrm{C}_4$ is a square and $\mathrm{L}_3$ is a line of length 3.
	\end{exam}
	
	A finite complete subgraph $\Xi=(\euV^\prime,\euE^\prime)$ of a na\"ive graph $\Gamma=(\euV,\euE)$ is said
	to be a {\sl $|\euV^\prime|$-clique}. Note that such a subgraph is always induced.
	For $n\geq1$, an $n$-clique $\Xi$ of $\Gamma$ is said to be maximal if there are no $(n+1)$-cliques of $\Gamma$ containing $\Xi$ as subgraph.
	
	\begin{defin}\label{defi:derivedgraph}\rm
		Let $\Gamma=(\euV,\euE)$ be a na\"ive graph. 
		The {\sl clique-graph} of $\Gamma$ is the graph
		$\Upsilon(\Gamma)=(\mathrm{\bf mx}(\Gamma),\mathrm{\bf mx}^2(\Gamma))$
		with 
		\begin{equation}
			\label{eq:derGr1}
			\begin{aligned}
				\mathrm{\bf mx}(\Gamma)&=\{\,\Xi\subseteq\Gamma\mid \Xi\ \text{a maximal clique in}\ \Gamma\,\}\\
				\mathrm{\bf mx}^2(\Gamma)&=\{\,\{\Xi,\Xi^\prime\}\mid \Xi,\Xi^\prime\in\mathrm{\bf mx}(\Gamma):\euV(\Xi)\cap\euV(\Xi^\prime)\not=\varnothing\,\}.
			\end{aligned}
		\end{equation}
		A maximal subtree $\mathrm{T}_{\Upsilon(\Gamma)}$ of $\Upsilon(\Gamma)$ is said to have the {\sl clique-intersection property} if, for every pair of distinct maximal cliques $\Xi,\Xi'\in \mathrm{\bf mx}(\Gamma)$, with path
		\[\begin{split}
			P_{\Xi,\Xi'}&=\left(\euV({P_{\Xi,\Xi'}}),\euE({P_{\Xi,\Xi'}})\right)\\
			\euV(P_{\Xi,\Xi'}) &=\{\:\Xi_1=\Xi,\:\Xi_2,\:\ldots,\:\Xi_{r-1},\Xi_r=\Xi'\:\}\subseteq\mathrm{\bf mx}(\Gamma),\\
			\euE(P_{\Xi,\Xi'}) &=\left\{\:\{\Xi_1,\Xi_2\},\:\ldots,\:\{\Xi_{r-1},\Xi_r\}\:\right\}\subseteq \euE(\mathrm{T}_{\Upsilon(\Gamma)}),
		\end{split}\]
		connecting them in the tree $\mathrm{T}_{\Upsilon(\Gamma)}$,
		the clique $$\Xi\cap\Xi'=(\euV(\Xi)\cap\euV(\Xi'),\euE(\Xi)\cap\euE(\Xi'))$$ is a subgraph of $\Xi_i$ for every $i=1,\ldots,r$ (cf. \cite[\S~3.1]{chordalgraphs}).
	\end{defin}
	
	For na\"ive graphs one has the following construction.
	Let $\Gamma_1=(\euV_1,\euE_1)$ and $\Gamma_2=(\euV_2,\euE_2)$ be two na\"ive graphs,
	with a common induced proper subgraph 
	$\Lambda=(\euV',\euE')$.
	The {\sl patching} of $\Gamma_1,\Gamma_2$ along $\Lambda$ is the graph $\Gamma=(\euV,\euE)$ with
	\[
	\euV=\euV_1\cup\euV_2,\qquad\euE=\euE_1\cup\euE_2,
	\]
	where we identify the vertices lying in $\euV_1\cap\euV'$ and in $\euV_2\cap\euV'$, and the edges lying in $\euE_1\cap\euE'$ and in $\euE_2\cap\euE'$.

	\subsection{Oriented graphs}\label{ssec:orientedgraphs}

	An {\sl oriented graph} $\Gamma=(\euV,\euE)$ consists of a non-empty set of {\sl vertices} $\euV$, partitioned as a disjoint union $\euV=\euV_\mathrm{s}\sqcup\euV_\mathrm{o}$; and a set of {\sl edges} 
	$$\euE\subseteq\euV\times\euV\smallsetminus\Delta(\euV),$$
	where $\Delta(\euV)=\{\,(v,v)\mid v\in\euV\,\}$ denotes the diagonal in $\euV\times\euV$.
	By definition, every oriented graph $\Gamma=(\euV,\euE)$ comes equipped with two maps, the {\sl origin} $o\colon\euE\to\euV$ given by the projection on the first coordinate, and the {\sl terminus} $t\colon\euE\to\euV$, given by the
	projection on the second coordinate.
	One has a partition $\euE=\euE_\mathrm{s}\sqcup\euE_\mathrm{o}$, where 
	\[
	\euE_\mathrm{s} = \{\:\eue\in\euE\:\mid\: (t(\eue),o(\eue))\notin\euE\:\}
	\]
	is the set of {\sl special edges}, while $\euE_\mathrm{o}$ is the set of {\sl ordinary edges}.
	Analogously, the elements of $\euV_\mathrm{s}$ and $\euV_\mathrm{o}$ are called special vertices and ordinary vertices respectively, and they must satisfy the following condition: if $\eue\in\euE$, then $o(\eue)\in\euV_\mathrm{o}$. 
	
	Every oriented graph $\Gamma=(\euV,\euE)$ defines a na\"ive graph
	$\ddot{\Gamma}=(\euV,\ddot{\euE})$, where
	\begin{equation}
		\label{eq:ddE}
		\ddot{\euE}=\{\,\{o(\eue),t(\eue)\}\mid\eue\in\euE\,\}.
	\end{equation}
	The notions of subgraphs, induced subgraphs, cliques and patching of oriented graphs are defined in the obvious way.
	Henceforth, we will consider only finite oriented graphs.
	
	\begin{exam}\label{ex:orientedgraph0}\rm
		Let $\Gamma=(\euV,\euE)$ be an oriented graph.
		We realize a special edge $(v,w)\in\euE_\mathrm{s}$ as an arrow originating at $v$ and pointing at $w$.
		If $(v,w),(w,v)\in\euE$, then we realize them as a unique ``unoriented'' edge joining $v$ and $w$.
		For example, the two pictures
		\[
		\xymatrix@R=1.5pt{  v_1  && v_4 \\ \bullet\ar@/^/[rr]\ar[dddd] && \bullet\ar@/^/[lldddd]\ar@/^/[dddd]  \\ 
			\\ \\ \\  \bullet\ar@/_/[rr]\ar@/^/[uuuurr] && \bullet\ar@/^/[uuuu] \\ v_2 && v_3 }\qquad\qquad
		\xymatrix@R=1.5pt{  v_1  && v_4 \\ \bullet\ar[rr]\ar[dddd] && \bullet\ar@{-}[lldddd]\ar@{-}[dddd]  \\ 
			\\ \\ \\  \bullet\ar[rr] && \bullet \\ v_2 && v_3 }
		\]
		realize the same oriented graph $\Gamma=(\euV,\euE)$, with
		\[
		\euE =\underbrace{\{\:(v_1,v_2),\:(v_1,v_4),\:(v_2,v_3)\:\}}_{\euE_\mathrm{s}}\sqcup
		\underbrace{\{\:(v_2,v_4),\:(v_4,v_2),\:(v_3,v_4),\:(v_4,v_2)\:\}}_{\euE_\mathrm{o}}
		\] and $\euV_\mathrm{o}=\euV$.
		(Henceforth, we will use the second type of realization for oriented graphs.)
	\end{exam}
	
	\begin{rem}\rm
		Let $\Gamma_1=(\euV_1,\euE_1)$ and $\Gamma_2=(\euV_2,\euE_2)$ be two oriented graphs, with $\euV_1=\euV_2=\{v,w\}$, $\euE_1=\euE_2=\varnothing$, and such that both $v,w$ are ordinary vertices of $\Gamma_1$, but $v$ is a special vertex of $\Gamma_2$.
		Then $\Gamma_1$ and $\Gamma_2$ are not the same oriented graph, even if their geometric realizations are equal.
	\end{rem}


	

	
	\subsection{Specially oriented graphs}
	
	\begin{defin}\label{defi:specialgraphs}\rm
		An oriented graph $\Gamma=(\euV,\euE)$ with decompositions $\euV=\euV_\mathrm{o}\sqcup\euV_\mathrm{s}$ and $\euE=\euE_\mathrm{o}\sqcup\euE_\mathrm{s}$ is said to be a {\sl specially oriented graph} (or just a {\sl special graph}), if the terminus of every special edge is a special vertex, i.e., \[
		\euV_{\mathrm{s}}\supseteq\left\{\:t(\eue)\:\mid\:\eue\in\euE_{\mathrm{s}}\:\right\}.
		\] 
		In other words, an oriented graph $\Gamma=(\euV,\euE)$ is special if, and only if, 
		$\Gamma$ does not contain a subgraph $\Gamma'$ whose geometric realization is either
		
		\begin{equation}\label{eq:special bad}
			\xymatrix@R=1.5pt{ y & x & z \\ \bullet\ar@{-}[r] & \bullet & \bullet\ar[l]}
			\qquad\text{or}\qquad
			\xymatrix@R=1.5pt{  y  & x & z \\ \bullet & \bullet\ar[l] & \bullet\ar[l]  }
		\end{equation}
--- we underline that we do not require $\Gamma'$ to be induced, so that the vertices $y$ and $z$ may be joined by an edge.
	\end{defin}

	\begin{rem}\label{rem:comb}\rm
		
		By definition, a specially oriented graph without special vertices is a combinatiorial graph in the sense of J-P. Serre (cf. \cite[\S~2.1]{serre:trees}).
		
	\end{rem}

	\begin{exam}\label{exam:complete specially oriented graph}\rm
		
		A complete oriented graph $\Gamma=(\euV,\euE)$ is specially oriented if, and only if, there is at most one special vertex and --- if there is one, say $v$ --- $(v,w)\in\euE_\mathrm{s}$ for every $w\in\euV\smallsetminus\{v\}$. 
		
	\end{exam}

	In other words, an oriented graph $\Gamma=(\euV,\euE)$ is special if, and only if, for every special vertex $v\in\euV_\mathrm{s}$ one has $(v,w)\notin\euE$ for every $w\in\euV$.


	
	\subsection{Oriented graphs of elementary type}

	For oriented graphs we have the following two constructions.
	\begin{itemize}
		\item[(a)] Let $\Gamma_1=(\euV_1,\euE_1)$ and $\Gamma_2=(\euV_2,\euE_2)$ be two oriented graphs.
		The {\sl disjoint union} of $\Gamma_1$ and $\Gamma_2$ is the oriented graph
		$$\Gamma_1\sqcup\Gamma_2:=\left(\euV_1\sqcup\euV_2,\euE_1\sqcup\euE_2\right).$$
		\item[(b)] Let $\Gamma=(\euV,\euE)$ be an oriented graph, with $\euV=\euV_\mathrm{s}\sqcup\euV_\mathrm{o}$.
		The {\sl cone graph} with basis $\Gamma$ is the oriented graph 
		$$\nabla(\Gamma):=(\euV(\nabla(\Gamma)),\euE(\nabla(\Gamma))),$$ where
		$\euV(\nabla(\Gamma))=\euV\sqcup\{v\}$, with $v$ a ``new'' ordinary vertex, called the {\sl tip} of the cone, and  
		$$\euE(\nabla(\Gamma))=\euE\sqcup\{(v,w)\mid w\in\euV_\mathrm{s}\}\sqcup\{(v,w),(w,v)\mid w\in\euV_\mathrm{o}\}.$$
		Observe that $\euV(\nabla(\Gamma))_\mathrm{s}=\euV_\mathrm{s}$.
	\end{itemize}
	
	\begin{rem}\label{rem:special closed operations}\rm
		It is straightforward to see that if $\Gamma_1$ and $\Gamma_2$ are two special graphs, then also the disjoint union $\Gamma_1\sqcup\Gamma_2$ is a special graph.
		Similarly, if $\Gamma$ is a special graph, then also the cone $\nabla(\Gamma)$ is a special graph. 
	\end{rem}
	
	The following definition generalizes the notion of {\sl graphs of elementary type} given in \cite[\S~3.3]{ac:RAAGs}.
	
	\begin{defin}\label{defin:ETgraphs}\rm
		The family of {\sl special graphs of elementary type} is the smallest family of oriented graphs containing graphs consisting of a single vertex, and such that:
		\begin{itemize}
			\item[(a)] if $\Gamma_1,\Gamma_2$ are special graphs of elementary type, then also their disjoint union $\Gamma_1\sqcup\Gamma_2$ is a special graph of elementary type. 
			\item[(b)] if $\Gamma$ is a special graph of elementary type, then also the cone $\nabla(\Gamma)$ is a special graph of elementary type.
		\end{itemize}
	\end{defin}
	
	\begin{exam}\rm
		Any special complete graph is of elementary type, as it may be constructed as iterated cone --- starting from the unique special vertex, if there is one.
		For instance, the displayed graph may be obtained by iterating three times the cone-construction by starting with the single special vertex $v$, i.e., $\Gamma=\nabla(\nabla(\nabla(\Gamma')))$, where $\Gamma'=(\{v\},\varnothing)$ and $v$ is a special vertex.
		\[ 	\xymatrix@R=1.5pt{&v&\\
			&\bullet&\\
			&&\\
			&&\\
			&\bullet\ar[uuu]&\\
			&v_1&\\
			v_2\bullet\ar@{-}[uur]\ar[uuuuur]\ar@{-}[rr]&&\bullet\ar[uuuuul]\ar@{-}[uul]v_3}\]
		
	\end{exam}
	
	\begin{rem}\label{rem:ETgraph conn}\rm
		\begin{itemize}
			\item[(a)] Since an oriented graph consisting of a single vertex is specially oriented, by Remark~\ref{rem:special closed operations} an oriented graph of elementary type is specially oriented.
			\item[(b)] If a connected oriented graph $\Gamma=(\euV,\euE)$ is of elementary type, then either $\vert\euV\vert=1$ or one has $\Gamma=\nabla(\Gamma')$ for some proper induced subgraph $\Gamma'\subseteq\Gamma$, and therefore there is an ordinary vertex of $\Gamma$ which is joined to all other vertices.
		\end{itemize}
	\end{rem}

	\begin{exam}\label{exam:ETgraphs}\rm
		\begin{itemize}
			\item[(a)] By Remark~\ref{rem:ETgraph conn}, if $\Gamma$ is a special graph such that either $\ddot\Gamma=\mathrm{C}_4$ or $\ddot\Gamma=\mathrm{L}_3$, then $\Gamma$ is not a special graph of elementary type, as $\Gamma$ is connected but there are no vertices which are joined to each other vertex.
			\item[(b)] The special graph $\Lambda_{\mathrm{s}}=(\euV,\euE)$ with geometric realization
			\[ 	\xymatrix@R=1.5pt{& v_2 & \\ v_1 &\bullet& v_3 \\ \bullet\ar[ur] && \bullet\ar[ul] } \]
			is not of elementary type, as it is connected but $v_2$ is special, so that $\Lambda_{\mathrm{s}}$ does not decompose as a cone by Remark~\ref{rem:ETgraph conn}--(b).
		\end{itemize}
	\end{exam}
	
	Observe that the inductive definition of oriented graphs of elementary type implies that every induced subgraph of an oriented graph of elementary type is again an oriented graph of elementary type.
	Oriented graphs of elementary type are characterized as follows.
	
	\begin{prop}\label{prop:graphsET}
		Let $\Gamma=(\euV,\euE)$ be an oriented graph.
		Then $\Gamma$ is of elementary type if, and only if, $\Gamma$ is specially oriented and it does not contain an induced subgraph $\Gamma'$ such that either $\ddot{\Gamma}'=\mathrm{C}_4$, $\ddot{\Gamma}'=\mathrm{L}_3$, or $\Gamma'=\Lambda_{\mathrm{s}}$ (cf. Examples \ref{ex:C4L3}, \ref{exam:ETgraphs}--(b)). 
	\end{prop}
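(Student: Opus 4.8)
The plan is to prove the two implications separately: the forward implication is essentially formal, given the results already established, while the reverse implication rests on an induction whose combinatorial heart is a universal-vertex lemma for $\{\mathrm{C}_4,\mathrm{L}_3\}$-free graphs.

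For the forward implication, if $\Gamma$ is of elementary type then it is specially oriented by Remark~\ref{rem:ETgraph conn}(a). For the absence of the three forbidden induced subgraphs I would invoke the observation preceding the statement, namely that every induced subgraph of an elementary-type graph is again of elementary type. Hence, were $\Gamma$ to contain an induced $\Gamma'$ with $\ddot{\Gamma}'=\mathrm{C}_4$, $\ddot{\Gamma}'=\mathrm{L}_3$, or $\Gamma'=\Lambda_{\mathrm{s}}$, this $\Gamma'$ would itself be of elementary type. But $\Gamma'$ is special (an induced subgraph of a special graph inherits the special-orientation condition), so this contradicts Example~\ref{exam:ETgraphs}, where it is shown that no special orientation of $\mathrm{C}_4$ or $\mathrm{L}_3$, and not $\Lambda_{\mathrm{s}}$, is of elementary type.

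For the reverse implication I would argue by induction on $\vert\euV\vert$, the case $\vert\euV\vert=1$ being the definition. First I record the structure forced by special orientation: an edge between two ordinary vertices must be ordinary (otherwise it would be a special edge with ordinary terminus), every edge incident to a special vertex is special and points into it, and there are no edges between two special vertices. The combinatorial engine is the lemma that a finite connected naive graph with no induced $\mathrm{C}_4$ and no induced $\mathrm{L}_3$ (i.e. $P_4$) has a universal vertex — the classical characterization of trivially perfect / quasi-threshold graphs. I would prove it through the dichotomy that for adjacent $u,v$ one always has $N[u]\subseteq N[v]$ or $N[v]\subseteq N[u]$ for the closed neighbourhoods $N[\,\cdot\,]$: a pair of witnesses $x\in N[u]\smallsetminus N[v]$ and $y\in N[v]\smallsetminus N[u]$ produces an induced $\mathrm{L}_3$ when $x\not\sim y$ and an induced $\mathrm{C}_4$ when $x\sim y$. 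A vertex $v$ of maximal closed neighbourhood is then universal, for otherwise a shortest path to a non-neighbour would force some neighbour $u_1$ of $v$ with $N[v]\subsetneq N[u_1]$, contradicting maximality.

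With this lemma the induction splits in two. If $\ddot{\Gamma}$ is disconnected, I write $\Gamma=\Gamma_1\sqcup\Gamma_2$ by separating one connected component from the rest; both pieces are induced subgraphs, hence special and forbidden-subgraph-free, so of elementary type by induction, and Definition~\ref{defin:ETgraphs}(a) concludes. If $\ddot{\Gamma}$ is connected, the lemma yields a universal vertex, and the step I expect to be the main obstacle is to promote it to an \emph{ordinary} universal vertex so that $\Gamma$ becomes a cone. Here I would use that two special universal vertices cannot coexist (special vertices have no outgoing edges, hence cannot be adjacent), reducing to a single special universal vertex $w$ with all other vertices ordinary; deleting $w$ and reapplying the lemma to the still $\{\mathrm{C}_4,\mathrm{L}_3\}$-free remainder produces an ordinary universal vertex of $\Gamma$ unless that remainder is disconnected — in which case two vertices in distinct components together with $w$ span an induced $\Lambda_{\mathrm{s}}$, against the hypothesis. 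Once an ordinary universal vertex $v$ is found, the special-orientation structure recorded above guarantees $\Gamma=\nabla(\Gamma\smallsetminus\{v\})$ exactly, and Definition~\ref{defin:ETgraphs}(b) applied to the inductively elementary-type graph $\Gamma\smallsetminus\{v\}$ finishes the proof.
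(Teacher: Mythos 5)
Your proof is correct and follows essentially the same route as the paper's: induction on the number of vertices, reduction to the connected case, existence of a universal (central) vertex in a connected $\{\mathrm{C}_4,\mathrm{L}_3\}$-free na\"ive graph, and the $\Lambda_{\mathrm{s}}$-exclusion to handle a special central vertex. The only differences are cosmetic --- the paper cites Wolk's theorem for the central vertex where you prove it via the nested-closed-neighbourhood dichotomy, and in the special-central-vertex case it shows directly that $\Gamma$ is a complete specially oriented graph rather than extracting an ordinary universal vertex from $\Gamma\smallsetminus\{w\}$.
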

	
	\begin{proof}
		If $\Gamma$ is not specially oriented, then it is not of elementary type by Remark~\ref{rem:ETgraph conn}--(a).
		If $\Gamma$ contains an induced subgraph $\Gamma'$ such that either $\ddot \Gamma'= \mathrm{C}_4$ or $\ddot \Gamma'=\mathrm{L}_3$, or $\Gamma'=\Lambda_\mathrm{s}$, then $\Gamma'$ is not of elementary type by Example~\ref{exam:ETgraphs}--(b)--(c), and thus $\Gamma$ is not of elementary type as well.
		
		Conversely, assume that $\Gamma$ contains no such induced subgraphs, and that it is specially oriented.
		By Remark~\ref{rem:ETgraph conn}--(b), without loss of generality we may assume that $\Gamma$ is connected.
		We proceed by induction on $|\euV|$.
		If $|\euV|\leq 2$, then clearly $\Gamma$ is of elementary type.
		Now, the na\"ive graph $\ddot\Gamma=(\euV,\ddot\euE)$ contains no induced subgraphs $\ddot\Gamma'$ equal to $\mathrm{C}_4$ or $\mathrm{L}_3$.
		By \cite{wolk}, $\ddot\Gamma$ has a \emph{central vertex}, i.e., a vertex $w\in\euV$ such that $\{w,v\}\in\ddot\euE$ for every $v\in\euV\smallsetminus\{w\}$.
		We have two cases.
		
		\medskip
		\noindent {\sl Case (1).}  If $w\in\euV_\mathrm{o}$, then $\Gamma=\nabla(\Gamma')$, with $\Gamma'$ the induced subgraph of $\Gamma$ with vertices $\euV\smallsetminus\{w\}$, which is of elementary type by induction --- and thus also $\Gamma$ is of elementary type.
		\medskip
		
		\noindent {\sl Case (2).} If $w\in\euV_\mathrm{s}$, then for every $v\in\euV\smallsetminus\{w\}$ one has $(v,w)\in\euE$, $(w,v)\notin\euE$, and $\euV_\mathrm{o}=\euV\smallsetminus\{w\}$, as $\Gamma$ is specially oriented (cf. Remark~\ref{rem:ETgraph conn}--(a)).
		Let $v_1,v_2$ be two distinct ordinary vertices of $\Gamma$.
		If $(v_1,v_2),(v_2,v_1)\notin\euE$, then the induced subgraph of $\Gamma$ with vertices $v_1,v_2,w$ is isomorphic to $\Lambda_{\mathrm{s}}$, a contradiction.
		On the other hand, $(v_1,v_2),(v_2,v_1)\notin\euE_{\mathrm{s}}$, as $v_1,v_2\in\euV_{\mathrm{o}}$.
		Therefore, $(v_1,v_2)\in\euE_{\mathrm{o}}$, and hence $\Gamma$ is a complete specially oriented graph --- in particular, $\Gamma$ is of elementary type by Example~\ref{exam:ETgraphs}--(a).
	\end{proof}
	
	
	\subsection{Graphs of pro-$\ell$ groups}
	
	Let $\Gamma=(\euV,\euE)$ be a connected oriented graph without special edges (i.e., a combinatorial graph in the sense of Serre --- see \cite[\S 2.1]{serre:trees}), and let 
	$$\calG(\Gamma)=\{G(v),G(\eue)\mid v\in\euV,\eue\in\euE\}$$
	be a collection of finitely generated pro-$\ell$ groups, endowed with monomorphisms of pro-$\ell$ groups 
	$$\partial_{\eue}\colon G({\eue})\longrightarrow G(t(\eue))$$
	for every edge $\eue\in\euE$.
	Then $\calG(\Gamma)$ is called a {\sl finite graph of (finitely generated) pro-$\ell$ groups} based on $\Gamma$.
	A finite graph of pro-$\ell$ groups $\calG(\Gamma)$ is said to be reduced if for every edge $\eue\in\euE$, 
	$\partial_{\eue}\colon G({\eue})\to G(t(\eue))$ is an isomorphism.
	
	Suppose that $\Gamma=\mathrm{T}$ is a tree.
	The {\sl fundamental group} $\Pi_1(\calG(\mathrm{T}))$ of a finite graph of pro-$\ell$ groups $\calG(\mathrm{T})$ based on $\mathrm{T}$
	is defined by the following pro-$\ell$ presentation:
	\[
	\Pi_1(\calG(\mathrm{T}))=\left\langle G(v)\mid v\in\euV, \partial_{\eue}(g)=\partial_{\bar\eue}(g)\;\forall\:g\in G(\eue),\eue\in\euE \right\rangle
	\]
	(cf. \cite{zalmel}).
	We have the following \cite[Cor. 4.5]{zalmel}.

	\begin{thm}\label{thm:zalmel}
		Let $U$ be an open subgroup of the fundamental pro-$\ell$ group $G=\Pi_1(\calG,\Gamma)$ of a finite graph of pro-$\ell$ groups $\calG$ based on the graph $\Gamma$, and let $\mathrm T$ be the Bass-Serre tree associated to $\calG$.
		Then $U$ is the fundamental pro-$\ell$ group $U=\Pi_1(\mathcal{U},\Delta)$, with $\Delta=U\backslash\!\backslash\mathrm{T}$, and vertex and edge groups $\mathcal{U}(m)$, $m\in\Delta$, are stabilizers $U_{s(m)}$, where $s\colon \Delta\to\mathrm{T}$ is a connected transversal of $\Delta$ in $\mathrm{T}$.
	\end{thm}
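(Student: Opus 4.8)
The plan is to deduce the statement from the general theory of pro-$\ell$ groups acting on pro-$\ell$ trees, exactly as the analogous statement for discrete groups follows from Bass--Serre theory. The starting point is the \emph{standard pro-$\ell$ tree} $\mathrm{T}$ associated to $\calG$: one assembles $\mathrm{T}$ from the coset spaces $G/G(v)$ ($v\in\euV$) as vertices and $G/G(\eue)$ ($\eue\in\euE$) as edges, with incidence maps induced by the monomorphisms $\partial_\eue$, topologised so that $\mathrm{T}$ is a profinite graph on which $G=\Pi_1(\calG,\Gamma)$ acts continuously. The two facts one needs to record here are that $\mathrm{T}$ is a pro-$\ell$ tree in the sense of Ribes--Zalesski\u i (i.e.\ the associated sequence of free pro-$\ell$ modules computing its reduced homology is exact), and that the $G$-action realises $\Gamma$ as the quotient $G\backslash\!\backslash\mathrm{T}$ with vertex and edge stabilisers the conjugates of the $G(v)$ and $G(\eue)$. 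This is the pro-$\ell$ incarnation of the fact that a graph of groups is recovered from the action on its universal cover.

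First I would restrict the action to the open subgroup $U$. Since $[G:U]<\infty$, the quotient $\Delta=U\backslash\!\backslash\mathrm{T}$ is again a finite graph (it maps onto $\Gamma$ with fibres indexed by the finite double-coset spaces $U\backslash G/G(v)$ and $U\backslash G/G(\eue)$), and, crucially, $\mathrm{T}$ remains the same pro-$\ell$ tree, the tree property being a statement about $\mathrm{T}$ alone and independent of which group acts on it. Thus $U$ is a pro-$\ell$ group acting continuously on a pro-$\ell$ tree with \emph{finite} quotient graph $\Delta$, which is precisely the hypothesis of the structure theorem for such actions. Applying that theorem yields a decomposition $U\cong\Pi_1(\mathcal{U},\Delta)$ whose vertex and edge groups are the stabilisers $U_{s(m)}$ for a suitable choice of representatives $s(m)\in\mathrm{T}$ of the classes $m\in\Delta$.

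The technical core, and the step I expect to be the main obstacle, is the proof of this structure theorem --- equivalently, the construction of the connected transversal $s\colon\Delta\to\mathrm{T}$ and the verification that the resulting comparison map is an isomorphism. Concretely, I would first lift a maximal subtree of $\Delta$ to a connected subgraph of $\mathrm{T}$ and then extend it to a full continuous section $s$; in the profinite world this lifting is not the elementary spanning-tree argument of the discrete theory but requires the continuous-section and specialisation techniques for profinite graphs, together with the pro-$\ell$ tree property of $\mathrm{T}$ to guarantee that the extension exists and is compatible with the prescribed data. Given $s$, the stabilisers $\mathcal{U}(m)=U_{s(m)}$ together with the restrictions of the $G$-action supply the edge monomorphisms, hence a graph of pro-$\ell$ groups $\mathcal{U}$ on $\Delta$, and the universal property of $\Pi_1(\mathcal{U},\Delta)$ produces a continuous homomorphism $\Pi_1(\mathcal{U},\Delta)\to U$. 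Finally I would show this map is an isomorphism: surjectivity follows because the images of the stabilisers generate $U$ (a Frattini and compactness argument using that $U$ acts with finite quotient), and injectivity follows from the simple connectivity of $\mathrm{T}$ in the pro-$\ell$-homological sense, by comparing the standard tree of $\mathcal{U}$ with $\mathrm{T}$ itself. The delicate points throughout are purely topological --- ensuring continuity of the section and that all coset and double-coset spaces carry the correct profinite structure --- since the algebra of the reconstruction is formally identical to the discrete case.
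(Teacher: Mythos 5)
This statement is not proved in the paper at all: it is quoted verbatim as \cite[Cor.~4.5]{zalmel} from Zalesski\u{\i}--Mel\cprime nikov, so there is no internal argument to compare against. Your sketch is a faithful outline of the standard profinite Bass--Serre argument given in that reference (standard pro-$\ell$ tree, finiteness of $\Delta=U\backslash\!\backslash\mathrm{T}$ from $[G:U]<\infty$, lifting a maximal subtree to a connected transversal, and the comparison map $\Pi_1(\mathcal{U},\Delta)\to U$ shown bijective via generation by stabilizers and simple connectivity of $\mathrm{T}$), so it is consistent with the source the authors rely on, with the caveat that the genuinely hard steps are deferred to exactly the machinery that reference develops.
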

	

	\section{Pro-$\ell$ groups and orientations}\label{sec:cohom}
	
	\subsection{Preliminaries on pro-$\ell$ groups}
	
	We work in the world of pro-$\ell$ groups.
	Henceforth, every subgroup of a pro-$\ell$ group will be tacitly assumed to be closed, and the generators of a subgroup will be intended in the topological sense.
	
	In particular, for a pro-$\ell$ group $G$ and a positive integer $n$, $G^n$ will denote the closed subgroup of $G$ generated by the $n$-th powers of all elements of $G$.
	Moreover, for two elements $g,h\in G$, we set $${}^{g}h=ghg^{-1},\qquad\text{and}\qquad[g,h]={}^gh\cdot h^{-1}.$$
	Given two subgroups $H_1,H_2$ of $G$, $[H_1,H_2]$ will denote the closed subgroup of $G$ generated by all commutators $[h,g]$
	with $h\in H_1$ and $g\in H_2$.
	In particular:
	\begin{itemize}
		\item[(a)] $G'$ will denote the closure of the commutator subgroup $[G,G]$ of $G$;
		\item[(b)] $\Phi(G)=G^{\ell}\cdot G'$ will denote the Frattini subgroup of $G$;
		\item[(c)] $G_{(3)}$ will denote the subgroup of $G$ defined by
		\[ G_{(3)}= \begin{cases} G^{\ell}\cdot[G',G] & \text{if }{\ell}\neq2 \\ G^4\cdot (G')^2\cdot[G',G] &\text{if }{\ell}=2.
		\end{cases} \]
	\end{itemize}
	
	For the properties of Galois cohomology of pro-$\ell$ groups we refer to \cite[Ch.~I]{serre:gal} and to \cite[Ch.~I and Ch.~III--\S~9]{nsw:cohn}.
	
	
	\subsection{Oriented pro-$\ell$ groups}
	Let $G$ be a pro-$\ell$ group. 
	An {\sl orientation} of $G$ is a continuous homomorphism $\theta\colon G\to\Z_{\ell}^\times$, where $\Z_{\ell}^\times$ denotes the group of units of $\Z_{\ell}$.
	Observe that $\image(\theta)\subseteq1+{\ell}\Z_{\ell}$, as the pro-$\ell$ Sylow subgroup of $\Z_{\ell}^\times$ is
	$$1+{\ell}\Z_{\ell}=\{1+{\ell}\lambda,\:\lambda\in\Z_{\ell}\}.$$
	We say that the orientation $\theta$ is {\sl torsion-free} if $\ell$ is odd, or if $\ell=2$ and $\image(\theta)\subseteq1+4\Z_2$.
	
	We call a couple $(G,\theta)$ consisting of a pro-$\ell$ group together with an orientation $\theta\colon G\to\Z_{\ell}^\times$ an {\sl oriented pro-$\ell$ group} (in \cite{efrat:small,eq:kummer} an oriented pro-$\ell$ group is called a {\sl cyclotomic pro-$\ell$ pair}).
	A morphism of oriented pro-$\ell$ groups $(G,\theta)\to(H,\tau)$ is a morphism of pro-$\ell$ groups $\varphi\colon G\to H$ satisfying $\theta=\tau\circ\varphi$.
	
	An oriented pro-$\ell$ group $(G,\theta)$ comes endowed with a continuous left $G$-module $\Z_{\ell}(1)$, which is isomorphic to $\Z_{\ell}$ as an abelian pro-$\ell$ group, and with $G$-action
	\[
	g.\lambda=\theta(g)\cdot \lambda\qquad\text{ for every }g\in G,\:\lambda\in\Z_{\ell}(1).
	\]
	Observe that the $G$-module $\Z_{\ell}(1)/{\ell}$ is simply the trivial module $\Z/{\ell}$, as $\theta(g)\equiv 1\bmod {\ell}$ for all $g\in G$.
	
	In the category of oriented pro-$\ell$ groups one has the following constructions (cf. e.g., \cite[\S~3]{efrat:small}).
	\begin{itemize}
		\item[(a)] If $(G,\theta)$ is an oriented pro-$\ell$ group and $N$ is a normal subgroup of $G$ such that $N\subseteq\kernel(\theta)$, then the quotient $G/N$ yields an oriented pro-$\ell$ group $(G/N,\theta_{/N})$, such that the canonical projection $G\to G/N$ induces a morphism of oriented pro-$\ell$ groups $(G,\theta)\to(G/N,\theta_{/N})$.
		\medskip
		
		\item[(b)] If $(G_1,\theta_1)$ and $(G_2,\theta_2)$ are oriented pro-$\ell$ groups, then the free pro-$\ell$ product of $G_1$ and $G_2$ yields an oriented pro-$\ell$ group
		\[
		(G_1,\theta_1)\amalg_{\hat\ell}(G_2,\theta_2):=(G_1\amalg_{\hat \ell}G_2,\theta),
		\]
		where $\theta$ is the orientation obtained by the universal property of the free product.
		
		\medskip
		\item[(c)] If $(G,\theta)$ is an oriented pro-$\ell$ group and $A$ is an abelian pro-$\ell$ group, then one has an oriented pro-$\ell$ group
		\[
		A\rtimes (G,\theta):=(A\rtimes G,\tilde\theta), 
		\]
		where $gag^{-1}=a^{\theta(g)}$ for every $a\in A$ and $g\in G$, and such that the canonical projection $A\rtimes G\to G$ induces a morphism of oriented pro-$\ell$ groups $(A\rtimes G,\tilde\theta)\to(G,\theta)$.
	\end{itemize}
	
	The most relevant examples of oriented pro-$\ell$ groups arise from maximal pro-$\ell$ Galois groups (cf. \cite[\S~4]{eq:kummer}).
	
	\begin{exam}\label{ex:Galois}\rm
		For a field $\K$, let $\bar\K$ denote a separable closure of $\K$, and let $\mu_{\ell^\infty}$ denote the 
		group of roots of 1 of $\ell$-power order lying in $\bar\K$.
		If $\K$ contains a root of 1 of order $\ell$, then $\mu_{\ell^\infty}$ is contained in the maximal pro-$\ell$-extension
		$\K(\ell)$ of $\K$, and the action of the {\sl maximal pro-$\ell$ Galois group} $G_{\K}(\ell)=\mathrm{Gal}(\K(\ell)/\K)$ of $\K$
		on $\mu_{\ell^\infty}$ fixes the roots of order $\ell$, and induces a natural orientation, the so-called {\sl pro-$\ell$-cyclotomic character} 
		\[
		\hat\theta_{\K,\ell}\colon G_{\K}(\ell)\longrightarrow \Z_\ell^\times,
		\]
		as $\mu_{\ell^\infty}\simeq\Z[\frac{1}{\ell}]/\Z$ and $\Aut(\Z[\frac{1}{\ell}]/\Z)$ is isomorphic to $\Z_\ell^\times$.
		In particular,
		$$\sigma(\zeta)=\zeta^{\hat\theta_{\K,\ell}(\sigma)}\qquad \text{for all }\sigma\in G_{\K}(\ell),\zeta\in\mu_{\ell^\infty}.$$
		Furthermore, one has $\image(\hat\theta_{\K,\ell})=1+\ell^f\Z_\ell$, where $f$ is the positive integer satisfying
		$|\mu_{\ell^\infty}\cap\K^\times|=\ell^f$ in case $\mu_{\ell^\infty}\cap\K^\times$ is finite,  and 
		$\image(\hat\theta_{\K,\ell})=\{1\}$ if $\mu_{\ell^\infty}\subseteq \K^\times$.
		The continuous $G_{\K}(\ell)$-module $\Z_\ell(1)$ induced by the cyclotomic character is called the 1st Tate twist of $\Z_\ell$ (cf. \cite[Def.~7.3.6]{nsw:cohn}), and for every $n\geq1$, $\Z_\ell(1)/\ell^n$ is isomorphic to the $G_{\K}(\ell)$-module of the $\ell^n$-th roots of 1.
	\end{exam}
	
	
	\subsection{$\theta$-abelian oriented pro-$\ell$ groups and locally uniform pro-$\ell$ groups} An oriented pro-$\ell$ group $(G,\theta)$ with torsion-free orientation is said to be {$\theta$-abelian} if $\kernel(\theta)$ is a free abelian pro-$\ell$ group and one has an isomorphism of oriented pro-$\ell$ groups
	\[
	(G,\theta)\simeq \kernel(\theta)\rtimes(G/\kernel(\theta),\theta_{/\kernel(\theta)}).
	\]
	One has the following group-theoretic ``translation'' of $\theta$-abelianity. 
	Recall that a pro-$\ell$ group $G$ is called {\sl locally uniform} if every finitely generated subgroup $H$ of $G$ is uniform --- i.e., $H$ is torsion free and the commutator subgroup $H'$ is contained in $H^\ell$, and also in $H^4$ if ${\ell}=2$ (cf. \cite[\S~3]{cq:bk}).
	One has the following (cf. \cite[Thm.~A]{cq:bk}).
	
	\begin{prop}\label{prop:locunif}
		A pro-$\ell$ group $G$ is locally uniform if, and only if, there exists a torsion-free orientation $\eth_G\colon G\to\Z_{\ell}^\times$ such that $(G,\eth_G)$ is a $\eth_G$-abelian oriented pro-$\ell$ group.
	\end{prop}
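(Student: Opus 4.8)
The statement is a biconditional, so the plan is to treat the two implications separately: the implication from $\eth_G$-abelianity to local uniformity is a direct computation, whereas the converse carries essentially all the weight. Throughout, for a $\eth_G$-abelian pair I write $A=\kernel(\eth_G)$, so that $G\simeq A\rtimes C$ with $A$ a free abelian pro-$\ell$ group, $C\simeq\image(\eth_G)\subseteq 1+\ell\Z_\ell$, and the conjugation action of $C$ on $A$ given by the scalar $\eth_G$, i.e. $tat^{-1}=a^{\eth_G(t)}$ for $t\in C$, $a\in A$. Since the orientation is torsion-free, $C$ is itself torsion-free (for $\ell=2$ this is exactly where $\image(\eth_G)\subseteq 1+4\Z_2$ is used).

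For the easy direction ($\eth_G$-abelian $\Rightarrow$ locally uniform) I would first note that $A\rtimes C$ is torsion-free: a torsion element maps to a torsion element of the torsion-free group $C$, hence lies in the torsion-free group $A$, hence is trivial. Now let $H\le G$ be finitely generated. If $\eth_G(H)=1$ then $H$ is a finitely generated free abelian pro-$\ell$ group, so uniform. Otherwise $\eth_G(H)$ is a nontrivial closed subgroup of $1+\ell\Z_\ell$, hence procyclic and free, so the extension $1\to H\cap A\to H\to\eth_G(H)\to1$ splits: pick $t\in H$ with $\eth_G(t)$ a generator, giving $H=(H\cap A)\rtimes\overline{\langle t\rangle}$. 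Because $t$ acts on $A$ by the \emph{scalar} $\eth_G(t)$, the normal closure of each generator of $H\cap A$ stays inside its own $\Z_\ell$-span, so $B:=H\cap A$ is finitely generated free abelian. Setting $s:=\eth_G(t)$, the subgroup $(s-1)B$ is $t$-invariant and $t$ acts trivially on $B/(s-1)B$, so $H/(s-1)B$ is abelian and $H'\subseteq(s-1)B$. As $s-1\in\ell\Z_\ell$ (and $\in4\Z_2$ if $\ell=2$) we get $H'\subseteq B^{\ell}\subseteq H^{\ell}$ (resp. $H'\subseteq H^4$). Thus $H$ is powerful and torsion-free, hence uniform, and $G$ is locally uniform.

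For the hard direction (locally uniform $\Rightarrow$ $\eth_G$-abelian) I would proceed as follows. Torsion-freeness is a local property, so $G$ is torsion-free; if $G$ is abelian it is free abelian pro-$\ell$ and $\theta=\bone$ exhibits it as $\eth_G$-abelian, so assume $G$ non-abelian. The engine is the analysis of two-generated subgroups: for $x,y\in G$ the group $H=\overline{\langle x,y\rangle}$ is uniform, whence $\dim H=d(H)\le2$. If $x,y$ do not commute then $H$ is non-abelian, forcing $\dim H=2$ and $H'\cong\Z_\ell$; one checks this means $H\cong\Z_\ell\rtimes\Z_\ell$ with the acting copy scaling the other by some $1+\ell^{k}$ (with $k\ge2$ if $\ell=2$, again forced by torsion-freeness). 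This attaches a scalar to every non-commuting pair. The plan is then to promote these local scalars to a single continuous character $\eth_G\colon G\to\Z_\ell^\times$: I would show that $G$ is metabelian with $A_0:=\overline{[G,G]}$ free abelian, that $G$ acts on $A_0$ through one scalar, and define $\eth_G$ by that action; finally I would set $A:=\kernel(\eth_G)$, prove it abelian (indeed free abelian), and prove that $1\to A\to G\to\image(\eth_G)\to1$ splits with scalar action, so that $(G,\eth_G)$ is $\eth_G$-abelian.

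The main obstacle is precisely this gluing: proving that the scalars attached to the various two-generated subgroups are mutually compatible and assemble into a genuine homomorphism with abelian kernel — equivalently, ruling out ``diagonal'' actions in which independent $\Z_\ell$-directions are scaled by $1+\ell^{k}$ with differing $k$. Such a configuration is exactly what destroys local uniformity: a well-chosen two-generated subgroup then satisfies $d(H)<\dim H$, contradicting uniformity, and this is the mechanism I would exploit. I expect the compatibility to be forced by comparing pairwise scalars inside three-generated uniform subgroups $\overline{\langle x,y,z\rangle}$, each again uniform of dimension equal to its minimal number of generators. The remaining technical points — that $\kernel(\eth_G)$ is \emph{free} abelian and the careful bookkeeping at $\ell=2$, where the hypothesis $\image(\eth_G)\subseteq1+4\Z_2$ is indispensable — I would settle following \cite[Thm.~A]{cq:bk}.
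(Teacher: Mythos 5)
The paper offers no proof of this proposition at all --- it is stated with only the citation ``(cf.\ \cite[Thm.~A]{cq:bk})'' --- so there is nothing internal to compare against, and your proposal is correct and, for the hard direction, ultimately rests on that same reference. Your argument for the implication ($\eth_G$-abelian $\Rightarrow$ locally uniform) is complete and correct, and for the converse you correctly identify the decisive mechanism --- a two-generated subgroup $H$ with $d(H)<\dim H$ ruling out independent directions scaled by different units --- while deferring the gluing of the local scalars into a single continuous orientation, and the freeness of $\kernel(\eth_G)$, to \cite[Thm.~A]{cq:bk}, which is exactly the step the paper itself leaves to that reference.
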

	
	In other words, a pro-$\ell$ group $G$ is locally uniform if, and only if, $G$ has a presentation
	\[
	G=\left\langle\:x_0,x_i:i\in I\:\mid\:{}^{x_0}x_i=x_i^{1+\ell^f},\:[x_i,x_j]=1\;\forall \:i,j\in I\:\:\right\rangle
	\]
	for some set $I$ and some $f\in\dbN\cup\{\infty\}$ ($f\geq2$ if ${\ell}=2$), and in this case $\eth_G(x_0)=1+p^f$ and 
	$\eth_G(x_i)=1$ for all $i\in I$.
	We call $\eth_G$ the canonical orientation of $G$.
	Observe that for every subgroup $H\subseteq G$ one has $\eth_H=\eth_G\vert_H$.
	
	\begin{rem}\label{rem:locunif cohom}\rm
		By Lazard's work \cite{lazard}, the $\F_{\ell}$-cohomology algebra $\bfH^\bullet(G,\F_{\ell})$ of a (finitely generated) uniform pro-$\ell$ group $G$ is isomorphic to the exterior algebra generated by $\rmH^1(G,\F_{\ell})$ (cf. e.g., \cite[Thm.~5.1.5]{sw:cohomology}).
		Hence, a locally uniform pro-$\ell$ group is Bloch-Kato.
	\end{rem}

	
	\subsection{Kummerian oriented pro-$\ell$ groups and 1-cyclotomicity}\label{ssec:1cyc}
	
	An oriented pro-$\ell$ group $(G,\theta)$ comes endowed with a distinguished normal subgroup of $G$,
	\[
	K_\theta(G)=\left\langle\:{}^gh\cdot h^{-\theta(g)}\:\mid\:g\in G,\: h\in\kernel(\theta)\:\right\rangle.
	\]
	(cf. \cite{eq:kummer}).
	One has $K_\theta(G)\subseteq \Phi(G)$, and moreover $K_\theta(G)\supseteq\kernel(\theta)'$, so that the quotient $\kernel(\theta)/K_\theta(G)$ is an abelian pro-$\ell$ group.
	Observe that if $\theta$ is trivial (i.e., constantly equal to 1), then $K_\theta(G)=G'$.
	
	One has the following (cf., e.g., \cite[Prop.~2.6 and \S~3.2]{qw:bogomolov}).
	
	\begin{prop}\label{prop:kummer}
		Let $(G,\theta)$ be an oriented pro-$\ell$ group with torsion-free orientation.
		The following are equivalent:
		\begin{itemize}
			\item[(i)] The map
			\begin{equation}\label{eq:kummer surj}
				\rmH^1(G,\Z_{\ell}(1)/{\ell}^n)\longrightarrow \rmH^1(G,\F_{\ell}),
			\end{equation}
			induced by the epimorphism of continuous $G$-modules $\Z_{\ell}(1)/{\ell}^n\to\F_{\ell}$, is surjective for every $n\geq1$. 
			\item[(ii)] The quotient $\kernel(\theta)/K_\theta(G)$ is a free abelian pro-$\ell$ group.
			\item[(iii)] There exists an epimorphism of pro-$\ell$ groups $\varphi \colon G\to\bar G$ where $\bar G$ is locally uniform and $\kernel(\varphi)\subseteq\Phi(G)$, inducing an epimorphism of oriented pro-$\ell$ groups $(G,\theta)\to (\bar G,\eth_{\bar G})$.
		\end{itemize}
		If these conditions hold, then $\kernel(\varphi)=K_\theta(G)$ and
		$$(\bar G,\eth_{\bar G})\simeq \kernel(\theta)/K_\theta(G)\rtimes(G/\kernel(\theta),\theta_{/\kernel(\theta)}).$$
	\end{prop}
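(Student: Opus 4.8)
The plan is to route everything through the single quotient $G/K$, where $K:=K_\theta(G)$, and to treat the cohomological clause~(i) separately. Write $A=\kernel(\theta)$ and $Q=G/A$. Via $\theta$ the group $Q$ embeds into $\Z_\ell^\times$, so $Q$ is procyclic, hence a free pro-$\ell$ group (either trivial or $\simeq\Z_\ell$). Since $K\supseteq A'$, the quotient $A/K$ is abelian pro-$\ell$, and by the very definition of $K$ one has ${}^gh\equiv h^{\theta(g)}\pmod{K}$ for all $h\in A$, $g\in G$; thus $Q$ acts on $A/K$ through $\theta$. Because $Q$ is free pro-$\ell$, the extension $1\to A/K\to G/K\to Q\to1$ splits, giving $G/K\simeq (A/K)\rtimes_\theta Q$. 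Comparing this with the presentation of $\eth$-abelian groups in Proposition~\ref{prop:locunif}, I see that $G/K$ is $\eth$-abelian (equivalently locally uniform) if, and only if, $A/K$ is a free abelian pro-$\ell$ group. So the first reduction is: condition~(ii) is equivalent to the assertion that $G/K$ is $\eth$-abelian, with $\eth_{G/K}=\theta_{/K}$, and the concluding isomorphism of the proposition is exactly this splitting with $A/K\simeq\kernel(\eth_{\bar G})$.

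Next I would establish (ii)$\Leftrightarrow$(iii). Assuming~(ii), take $\varphi\colon G\to G/K$: its kernel is $K\subseteq\Phi(G)$, it is a morphism of oriented pro-$\ell$ groups by construction, and $G/K$ is $\eth$-abelian by the previous paragraph, so this is~(iii). Conversely, given~(iii) with target $\bar G$, orientation-compatibility forces $\kernel(\varphi)\subseteq A$, and functoriality of the Kummer subgroup together with $K_{\eth_{\bar G}}(\bar G)=1$ (as $\bar G$ is $\eth_{\bar G}$-abelian) yields $K\subseteq\kernel(\varphi)$; in particular $\kernel(\varphi)=K$ once the reverse inclusion is shown. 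Then $A/K$ surjects onto $A/\kernel(\varphi)=\kernel(\eth_{\bar G})$, a free abelian pro-$\ell$ group, with kernel $\kernel(\varphi)/K$. Here I would check that $\kernel(\varphi)\cap A\subseteq K\cdot A^\ell$, using $\kernel(\varphi)\subseteq\Phi(G)=G^\ell G'$ together with the identity ${}^gh\,h^{-1}=({}^gh\,h^{-\theta(g)})\,h^{\theta(g)-1}\in K\cdot A^\ell$ for $h\in A$; this places $\kernel(\varphi)/K$ inside $\Phi(A/K)=(A/K)^\ell$. Since free abelian pro-$\ell$ groups are projective, the surjection $A/K\twoheadrightarrow\kernel(\eth_{\bar G})$ splits, and a direct factor contained in the Frattini subgroup must be trivial; hence $A/K\simeq\kernel(\eth_{\bar G})$ is free abelian, giving~(ii), and $\kernel(\varphi)=K$.

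The main obstacle is the equivalence (i)$\Leftrightarrow$(ii). I would realize $\rmH^1(G,\Z_\ell(1)/\ell^n)$ by continuous $1$-cocycles $c\colon G\to\Z/\ell^n$ satisfying $c(gh)=c(g)+\theta(g)\,c(h)$ (additive notation, $\theta(g)$ acting by multiplication), with the map of~\eqref{eq:kummer surj} given by reduction modulo $\ell$; a class in $\rmH^1(G,\F_\ell)=\Hom(G,\F_\ell)$ lifts precisely when some such cocycle reduces to it. Restricting a cocycle to $A$, where $\theta\equiv1$, produces a homomorphism $A\to\Z/\ell^n$ that automatically annihilates $K$, since the cocycle identity forces $c({}^gh\,h^{-\theta(g)})=0$; thus the lifting problem decouples into a $Q$-equivariant lifting problem on the quotient $A/K$. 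Using the five-term inflation–restriction sequence for $1\to A\to G\to Q\to1$, I would reduce surjectivity of~\eqref{eq:kummer surj} for all $n$ to surjectivity of $\Hom(A/K,\Z/\ell^n)\to\Hom(A/K,\F_\ell)$ for all $n$, and the latter holds for every $n$ exactly when $A/K$ has no $\ell$-torsion, i.e.\ is free abelian pro-$\ell$. The delicate point is controlling the contribution of $Q$-cohomology so that the global lifting over $G$ is genuinely governed by the torsion of $A/K$ alone; here I expect to exploit $\cd(Q)\le1$ (as $Q$ is free pro-$\ell$) to annihilate the higher inflation terms and close the argument.
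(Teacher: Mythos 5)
The paper does not actually prove Proposition~\ref{prop:kummer}: it is imported verbatim from \cite[Prop.~2.6 and \S~3.2]{qw:bogomolov}, so there is no internal proof to compare against. Judged on its own terms, your reconstruction follows the standard route (the same one used in the cited source): split $G/K_\theta(G)$ as $(\kernel(\theta)/K_\theta(G))\rtimes_\theta Q$ using projectivity of the procyclic quotient $Q=\image(\theta)$, identify (ii) with $\theta_{/K}$-abelianity of $G/K_\theta(G)$, and handle (i) via the five-term sequence for $1\to\kernel(\theta)\to G\to Q\to 1$ together with $\cd(Q)\le 1$ and the identification $\rmH^1(\kernel(\theta),\Z/\ell^n)^Q=\Hom(\kernel(\theta)/K_\theta(G),\Z/\ell^n)$. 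This is correct in outline and would yield the proposition.

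Two steps need tightening. First, in (iii)$\Rightarrow$(ii) your containment $\kernel(\varphi)\subseteq K_\theta(G)\cdot\kernel(\theta)^\ell$ is justified only by the identity for commutators $[g,h]$ with $h\in\kernel(\theta)$; the subgroup $\Phi(G)\cap\kernel(\theta)$ also contains contributions from $G^\ell$ and from commutators with neither entry in $\kernel(\theta)$. The clean fix is to pass to $G/K_\theta(G)\simeq (A/K)\rtimes_\theta Q$ (with $A=\kernel(\theta)$, $K=K_\theta(G)$) and check there that $\Phi(G/K)\cap(A/K)\subseteq (A/K)^\ell$, e.g.\ by reducing modulo $(A/K)^\ell$, over which the $Q$-action becomes trivial and the quotient abelian. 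Second, in the diagram chase for (ii)$\Rightarrow$(i) the vanishing $\rmH^2(Q,-)=0$ gives you surjectivity onto the restriction term, but you must still verify that the left-hand vertical arrow $\rmH^1(Q,\Z_\ell(1)/\ell^n)\to\rmH^1(Q,\F_\ell)$ is surjective; for $Q$ procyclic this is the explicit computation $\rmH^1(Q,M)\simeq M/(\theta(g_0)-1)M$, and it is here (and in the identification of $\eth_{\bar G}$ with $\theta_{/K}$) that torsion-freeness of $\theta$ is genuinely used. Finally, the slogan ``no $\ell$-torsion, i.e.\ free abelian pro-$\ell$'' should be phrased as the statement that an abelian pro-$\ell$ group $B$ is free abelian pro-$\ell$ if and only if $\Hom(B,\Z/\ell^n)\to\Hom(B,\F_\ell)$ is onto for all $n$, since torsion-freeness and freeness require an argument (or a citation) for non-finitely-generated $B$.
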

	
	An oriented pro-$\ell$ group $(G,\theta)$ with torsion-free orientation $\theta$ satisfying the above equivalent conditions is said to be {\sl Kummerian} (cf. \cite[Def.~3.4]{eq:kummer}).
	If moreover the oriented pro-$\ell$ group $(H,\theta\vert_H)$ is Kummerian for every subgroup $H$ of $G$, then the oriented pro-$\ell$ group $(G,\theta)$ is said to be {\sl 1-cyclotomic} (cf. \cite{qw:cyclotomic}).
	(Observe that in \cite{dcf:lift,cq:galfeat,cq:1smoothBK}, a 1-cyclotomic oriented pro-$\ell$ group is said to be {\sl 1-smooth}.)

	\begin{exam}\label{exam:kummer groups}\rm
		\begin{itemize}
			\item[(a)] If $(G,\theta)$ is a $\theta$-abelian oriented pro-$\ell$ group with torsion-free orientation, then, by Proposition~\ref{prop:kummer}--(iii), it is 1-cyclotomic, as $(H,\theta\vert_H)$ is $\theta\vert_H$-abelian for every subgroup $H$.
			\item[(b)] If $G$ is a free pro-$\ell$ group, then the oriented pro-$\ell$ group $(G,\theta)$ is 1-cyclotomic for any orientation $\theta\colon G\to\Z_\ell^\times$ (cf. \cite[\S~2.2]{qw:cyclotomic}).
			\item[(c)] If $(G,\theta)$ is an oriented pro-$\ell$ group with trivial orientation $\theta\equiv\mathbf{1}$,
			then $(G,\theta)$ is Kummerian if, and only if, the abelianization $G/G'$ is a free abelian pro-$\ell$ group
			(cf. \cite[Example~3.5--(1)]{eq:kummer}).
			Consequently, $(G,\theta)$ is 1-cyclotomic if, and only if, $H/H'$ is a free abelian pro-$\ell$ group for every subgroup $H$ of $G$ (pro-$\ell$ groups satisfying this property are also called {\sl absolutely torsion-free pro-$\ell$ groups}, cf. \cite{wurfel}).
		\end{itemize}
	\end{exam}
	
	1-cyclotomic oriented pro-$\ell$ groups arise naturally in Galois theory (cf. \cite[Thm.~4.2]{eq:kummer} and \cite[\S~2.3]{qw:cyclotomic}).
	
	\begin{thm}\label{thm:galois}
		Let $\K$ be a field containing a primitive $\ell$-th root of 1 (and also $\sqrt{-1}$ if $\ell=2$).
		Then $(G_{\K}(\ell),\hat\theta_{\K,\ell})$ is a torsion-free 1-cyclotomic oriented pro-$\ell$ group.
	\end{thm}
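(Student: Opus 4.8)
The plan is to verify the two assertions---torsion-freeness and 1-cyclotomicity---separately, reducing the latter to ordinary Kummer theory applied to every closed subgroup.

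First I would dispose of torsion-freeness. If $\ell$ is odd there is nothing to prove. If $\ell=2$, the hypothesis $\sqrt{-1}\in\K$ means $\mu_4\subseteq\K$, so $G_\K(\ell)$ fixes $\mu_4$ pointwise; by the defining formula $\sigma(\zeta)=\zeta^{\hat\theta_{\K,\ell}(\sigma)}$ of Example~\ref{ex:Galois} this forces $\hat\theta_{\K,2}(\sigma)\equiv 1\bmod 4$ for every $\sigma$, i.e. $\image(\hat\theta_{\K,2})\subseteq 1+4\Z_2$. Hence the orientation is torsion-free in either case.

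For 1-cyclotomicity I must show that $(H,\hat\theta_{\K,\ell}\vert_H)$ is Kummerian for every closed subgroup $H\leq G_\K(\ell)$. The key reduction is that each such $H$ is again a maximal pro-$\ell$ Galois group. Let $L=\K(\ell)^H$ be the fixed field, so that $H=\Gal(\K(\ell)/L)$ and $\K\subseteq L\subseteq\K(\ell)$. Since $\K(\ell)/L$ is a pro-$\ell$ extension one has $\K(\ell)\subseteq L(\ell)$; conversely (both $\K(\ell)$ and $L(\ell)$ being Galois over $L$) $\Gal(L(\ell)/\K(\ell))$ is a closed subgroup of the pro-$\ell$ group $G_L(\ell)$, so $L(\ell)/\K(\ell)$ is a pro-$\ell$ Galois extension, and it is trivial because $\K(\ell)$ is $\ell$-closed (it admits no proper pro-$\ell$ extension, being the maximal pro-$\ell$ extension of $\K$). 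Thus $L(\ell)=\K(\ell)$ and $H=G_L(\ell)$, with $\hat\theta_{\K,\ell}\vert_H=\hat\theta_{L,\ell}$ the cyclotomic character of $L$; moreover $L\supseteq\K$ still contains $\mu_\ell$ (and $\sqrt{-1}$ if $\ell=2$). So it suffices to prove that every such $(G_L(\ell),\hat\theta_{L,\ell})$ is Kummerian.

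To establish Kummerianity I would use criterion (i) of Proposition~\ref{prop:kummer}, namely the surjectivity of $\rmH^1(G_L(\ell),\Z_\ell(1)/\ell^n)\to\rmH^1(G_L(\ell),\F_\ell)$ for all $n\geq1$. Because $L\supseteq\mu_\ell$, every $\ell^n$-th root of an element of $L^\times$ generates an $\ell$-extension of $L$ and therefore lies in $L(\ell)$; pro-$\ell$ Kummer theory (Hilbert~90) then yields natural isomorphisms $\rmH^1(G_L(\ell),\Z_\ell(1)/\ell^n)\cong L^\times/(L^\times)^{\ell^n}$, identifying $\Z_\ell(1)/\ell^n$ with the $G_L(\ell)$-module $\mu_{\ell^n}\subseteq L(\ell)$. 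Under these identifications the map in question becomes the natural projection $L^\times/(L^\times)^{\ell^n}\to L^\times/(L^\times)^{\ell}$ induced by the reduction $\mu_{\ell^n}\to\mu_\ell$, which is visibly surjective. This verifies the criterion, so each $H=G_L(\ell)$ is Kummerian and $(G_\K(\ell),\hat\theta_{\K,\ell})$ is 1-cyclotomic. I expect the main obstacle to be precisely the reduction step $H=G_L(\ell)$: one must invoke that $\K(\ell)$ is $\ell$-closed and argue carefully through the Galois correspondence so that passing to a closed subgroup genuinely returns a maximal pro-$\ell$ Galois group equipped with its own cyclotomic character. Once this is in place, the cohomological computation is just the naturality of Kummer theory and presents no real difficulty.
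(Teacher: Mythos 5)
The paper offers no proof of Theorem~\ref{thm:galois} at all --- it is imported from \cite{eq:kummer}*{Thm.~4.2} and \cite{qw:cyclotomic}*{\S~2.3} --- and your argument is correct and essentially the standard one given there: torsion-freeness of $\hat\theta_{\K,\ell}$ from $\mu_4\subseteq\K$ when $\ell=2$, reduction of $1$-cyclotomicity to the fact that every closed subgroup of $G_{\K}(\ell)$ is again a maximal pro-$\ell$ Galois group $G_L(\ell)$ with $\mu_\ell\subseteq L$, and Kummerianity via criterion~(i) of Proposition~\ref{prop:kummer} and the surjectivity of $L^\times/(L^\times)^{\ell^n}\to L^\times/(L^\times)^{\ell}$ under the Kummer identifications. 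The one spot where your justification is thinner than the fact requires is the assertion that $\K(\ell)$ admits no proper $\ell$-extension of itself: ``being the maximal pro-$\ell$ extension of $\K$'' does not formally imply this, and the real reason is again Kummer theory --- since $\mu_\ell\subseteq\K$ and $\chr\K\neq\ell$, any $a\in\K(\ell)^\times$ lies in a finite subextension $\K'$ and $\K'(a^{1/\ell})/\K'$ is an abelian $\ell$-extension, hence contained in $\K(\ell)$, so $\K(\ell)^\times=(\K(\ell)^\times)^{\ell}$ and $\K(\ell)$ has no cyclic degree-$\ell$ extensions --- but this is the standard fact you correctly identified as the crux, not a step that fails.
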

	
	The {\sl pro-$\ell$ version} of De~Clercq-Florence's Smoothness Conjecture (cf. \cite[Conj.~14.25]{dcf:lift}, see also \cite[Conj.~2.10]{cq:galfeat}) states the following.
	
	\begin{conj}\label{conj:smoothness}
		If a pro-$\ell$ group $G$ may be endowed with a 1-cyclotomic orientation $\theta\colon G\to \Z_{\ell}^\times$, then $G$ is {\sl weakly} Bloch-Kato. 
	\end{conj}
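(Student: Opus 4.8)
The plan is to reduce the statement, by way of the structural description of Kummerian groups in Proposition~\ref{prop:kummer}, to a comparison between the cohomology of $G$ and that of its canonical locally uniform quotient, and then to try to propagate degree-one generation from the quotient up to $G$ through the Hochschild--Serre spectral sequence. It should be stressed at the outset that this is the pro-$\ell$ incarnation of the De~Clercq--Florence smoothness conjecture and is genuinely open, so what follows is a proposed strategy rather than a complete argument; the difficulty concentrates in a single step that I isolate below.

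First I would fix a closed subgroup $H\subseteq G$ and exploit the $1$-cyclotomicity hypothesis: the oriented pro-$\ell$ group $(H,\theta\vert_H)$ is Kummerian, so by Proposition~\ref{prop:kummer}--(iii) there is a Frattini epimorphism $\varphi_H\colon H\to\bar H$ onto a locally uniform pro-$\ell$ group $\bar H=H/K_{\theta\vert_H}(H)$ with $\kernel(\varphi_H)=K_{\theta\vert_H}(H)\subseteq\Phi(H)$. By Remark~\ref{rem:locunif cohom} the algebra $\bfH^\bullet(\bar H,\F_\ell)$ is the exterior algebra on $\rmH^1(\bar H,\F_\ell)$, hence generated in degree one. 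Because $\kernel(\varphi_H)$ lies in the Frattini subgroup, inflation $\rmH^1(\bar H,\F_\ell)\to\rmH^1(H,\F_\ell)$ is an isomorphism, so the degree-one part of $\bfH^\bullet(H,\F_\ell)$ is already accounted for by the locally uniform quotient.

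The heart of the matter is to show that no indecomposable classes are created in higher degrees, i.e.\ that the cup-product map $\Lambda^\bullet\rmH^1(H,\F_\ell)\to\bfH^\bullet(H,\F_\ell)$ is onto (and, for a full quadratic conclusion, that its kernel is generated in degree two). For this I would run the Hochschild--Serre spectral sequence $E_2^{p,q}=\rmH^p(\bar H,\rmH^q(K_{\theta\vert_H}(H),\F_\ell))\Rightarrow \bfH^{p+q}(H,\F_\ell)$ of the extension $1\to K_{\theta\vert_H}(H)\to H\to\bar H\to1$. The bottom row contributes the exterior algebra along the base, and the crucial point is to control the contributions of the fibre cohomology $\bfH^\bullet(K_{\theta\vert_H}(H),\F_\ell)$ to the abutment: one wants every class surviving to the $E_\infty$-page to be a product of edge classes inflated from $\rmH^1$. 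The abelianity of $\kernel(\theta)/K_\theta(G)$ and the semidirect-product description at the end of Proposition~\ref{prop:kummer} should help pin down the low-degree behaviour, and the $1$-cyclotomic hypothesis is precisely what permits running this analysis uniformly over all subgroups $H$ rather than for $G$ alone.

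The main obstacle --- and the reason the statement remains conjectural --- is exactly this spectral-sequence step: there is no general mechanism forcing the differentials emanating from the fibre cohomology to annihilate all would-be indecomposable classes in degrees $\ge2$, and the kernel $K_{\theta\vert_H}(H)$ need be neither Kummerian nor of controlled cohomological type. A plausible route would be to settle the degree-two case first --- a Merkurjev--Suslin-type assertion that $\rmH^2(H,\F_\ell)$ is spanned by cup products, which already encodes quadraticity of relations --- and only afterwards attempt an inductive ascent into higher degrees; alternatively one might look for a Bockstein- or Massey-product-theoretic obstruction that is forced to vanish under $1$-cyclotomicity. Checking the conjecture against the concrete oriented pro-$\ell$ RAAGs of this paper, whose cohomology is computed explicitly in Theorem~\ref{thm:chordal}--(iii), would furnish a useful consistency test for any such strategy.
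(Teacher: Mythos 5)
The statement you were asked to prove is not a theorem of the paper: it is Conjecture~\ref{conj:smoothness}, the pro-$\ell$ version of the De~Clercq--Florence Smoothness Conjecture, and the paper records it precisely because it is open. There is no proof to compare against, and you were right to say so explicitly rather than manufacture one. Your writeup is a strategy sketch with an honestly flagged gap, and the gap you isolate --- controlling the Hochschild--Serre spectral sequence of $1\to K_{\theta\vert_H}(H)\to H\to\bar H\to1$ so that no indecomposable classes survive in degree $\geq 2$ --- is indeed where the entire difficulty lives: $1$-cyclotomicity gives you the Frattini cover of a locally uniform group uniformly over all closed subgroups $H$ (Proposition~\ref{prop:kummer}, Remark~\ref{rem:locunif cohom}), but says nothing a priori about the cohomology of the kernel $K_{\theta\vert_H}(H)$, and that is exactly the missing mechanism. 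The preliminary steps you do carry out are sound: inflation $\rmH^1(\bar H,\F_\ell)\to\rmH^1(H,\F_\ell)$ is an isomorphism because the kernel lies in $\Phi(H)$, and the bottom row of the spectral sequence contributes the exterior algebra on $\rmH^1$.

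Two smaller remarks. First, you are implicitly aiming at the full Bloch--Kato property (surjectivity of $\Lambda^\bullet\rmH^1(H,\F_\ell)\to\bfH^\bullet(H,\F_\ell)$ with quadratic kernel), whereas the conjecture only asks for the \emph{weakly} Bloch--Kato property of \cite{dcf:lift}, a strictly weaker cohomological condition; any honest attack should probably target the weaker statement first, starting with your suggested degree-two (Merkurjev--Suslin-type) case. Second, the consistency test you propose at the end is in fact carried out in this paper: Theorem~\ref{thm:main} shows that for oriented pro-$\ell$ RAAGs, $1$-cyclotomicity of $(G_{\Gamma,\lambda},\theta_{\Gamma,\lambda})$ is equivalent to $G_{\Gamma,\lambda}$ being Bloch--Kato (both being equivalent to $\Gamma$ being of elementary type), so the conjecture holds for this entire class --- that is the sense in which the paper ``provides evidence'' for Conjecture~\ref{conj:smoothness} without proving it.
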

	
	The definition of weakly Bloch-Kato pro-$\ell$ group may be found in \cite[Def.~14.23]{dcf:lift} --- for our purposes it is enough to remark that a Bloch-Kato pro-$\ell$ group is (unsurprisingly) also weakly Bloch-Kato (see also \cite[\S~2.3]{cq:galfeat}).
	
	\begin{rem}\label{rem:kummer quotient}\rm
		Let $(G,\theta)$ be a Kummerian oriented pro-$\ell$ group, with $\theta$ a torsion-free orientation. Then by Proposition~\ref{prop:kummer} one has
		\[
		(G/K_\theta(G),\theta_{/K_\theta(G)})\simeq \kernel(\theta)/K_\theta(G)\rtimes(G/\kernel(\theta),\theta_{/\kernel(\theta)})
		\]
		with $\kernel(\theta)/K_\theta(G)$ a free abelian pro-$\ell$ group.
		Now let $N$ be a normal subgroup of $G$ contained in $\kernel(\theta)$ such that the map
		\[
		\mathrm{res}_{G,N}^1\colon \rmH^1(G,\F_\ell)\longrightarrow\rmH^1(N,\F_\ell)^G
		\]
		is injective --- namely, by duality the map $N/N^\ell[N,G]\to G/\Phi(G)$ induced by the inclusion $N\hookrightarrow G$ is injective.
		Then the group $\bar N=NK_\theta(G)/K_\theta(G)$ is an isolated subgroup of the free pro-$\ell$ group $\kernel(\theta)/K_\theta(G)$.
		Therefore, 
		\[
		\frac{\kernel(\theta)}{K_\theta(G)}\simeq \bar N\times A\qquad\text{and}
		\qquad \frac{\kernel(\theta_{/N})}{K_{\theta_{/N}}(G/N)}\simeq \frac{\kernel(\theta)/K_\theta(G)}{\bar N}\simeq A,
		\]
		where $A$ is a free pro-$\ell$ group.
		Thus, also the oriented pro-$\ell$ group $(G/N,\theta_{/N})$ is Kummerian by Proposition~\ref{prop:kummer}.
	\end{rem}
	
	
	\subsection{Oriented pro-$\ell$ groups of elementary type}\label{ssec:ETC}
	
	The following definition is due to I.~Efrat (cf. \cite[\S~3]{efrat:small}, see also \cite[\S~7.5]{qw:cyclotomic}).
	
	\begin{defin}\label{defin:ET}\rm
		The family of oriented pro-$\ell$ groups of {\sl elementary type} is the smallest class of oriented pro-$\ell$ groups such that
		\begin{itemize}
			\item[(a)] the oriented pro-$\ell$ group $(\{1\},\mathbf{1})$ consisting of the trivial group and the trivial orientation is of elementary type, as well as the oriented pro-$\ell$ group $(\Z_{\ell},\lambda)$ for any linear orientation $\lambda\colon \Z_{\ell}\to\Z_{\ell}^\times$;
			\item[(b)] any oriented pro-$\ell$ group $(G,\theta)$ with $G$ a {\sl Demushkin group} and $\theta\colon G\to\Z_{\ell}^\times$ its canonical orientation (cf. \cite[Thm.~4]{labute:demushkin} and \cite[\S~5.3]{qw:cyclotomic});
			\item[(c)] if $(G_1,\theta_1)$ and $(G_2,\theta_2)$ are two oriented pro-$\ell$ groups of elementary type, then also the free product $$(G_1,\theta_1)\amalg^{\hat \ell}(G_2,\theta_2)$$
			is an oriented pro-$\ell$ group of elementary type;
			\item[(d)] if $(G,\theta)$ is an oriented pro-$\ell$ group of elementary type and $A$ is a free abelian pro-$\ell$ group, then also the oriented pro-$\ell$ group $A\rtimes(G,\theta)$ is of elementary type.
		\end{itemize}
	\end{defin}
	
	By \cite[Thm.~1.3]{qw:cyclotomic}, if $(G,\theta)$ is an oriented pro-$\ell$ group of elementary type, then $G$ is Bloch-Kato and $(G,\theta)$ is 1-cyclotomic.
	Moreover, one has the following fact (cf. \cite[\S~3.3--3.4]{qw:cyclotomic}).
	
	\begin{fact}\label{prop:subgps etc}
		Let $(G,\theta)$ be an oriented pro-$\ell$ group and let $H\subseteq G$ be a subgroup. 
		\begin{itemize}
			\item[(i)] If $(G,\theta)=(G_1,\theta_1)\amalg^{\hat \ell}(G_2,\theta_2)$, then
			\[ (H,\theta\vert_H)\simeq {\coprod_{i\in I}}^{\hat\ell}(H_i,\tau_i),    \]
			where $H_i$ is a subgroup of either $G_1$ or $G_2$ --- and $\tau_i$ denotes the restriction of $\theta_1$ or $\theta_2$ respectively ---, or $H_i$ is a free pro-$\ell$ group.
			\item[(ii)] If $(G,\theta)=A\rtimes(G_\circ,\theta_\circ)$ for some oriented pro-$\ell$ group $(G_\circ,\theta_\circ)$ and $A\simeq \Z_\ell$, then 
			\[
			(H,\theta\vert_H)\simeq (A\cap H)\rtimes(H_\circ,\theta_\circ\vert_{H_\circ})
			\]
			for some subgroup $H_\circ\subseteq G_\circ$.
		\end{itemize}
		In particular, if $(G,\theta)$ is of elementary type, then for every finitely generated subgroup $H\subseteq G$ the oriented pro-$\ell$ group $(H,\theta\vert_H)$ is again of elementary type.
	\end{fact}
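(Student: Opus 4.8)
The plan is to establish (i) and (ii) as structural lemmas and then derive the final assertion by induction along the inductive description of elementary type (Definition~\ref{defin:ET}). For (i) I would pass through the pro-$\ell$ Bass--Serre theory recorded in Theorem~\ref{thm:zalmel}. A free pro-$\ell$ product $(G_1,\theta_1)\amalg^{\hat\ell}(G_2,\theta_2)$ is the fundamental pro-$\ell$ group of a graph of pro-$\ell$ groups supported on a single edge with \emph{trivial} edge group; hence $G$ acts on the associated Bass--Serre tree $\mathrm{T}$ with vertex stabilizers the conjugates of $G_1$ and $G_2$ and with trivial edge stabilizers. Restricting the action to $H$ and applying Theorem~\ref{thm:zalmel} to $\Delta=H\backslash\!\backslash\mathrm{T}$ realizes $H$ as the fundamental group of a graph of groups whose vertex groups are the stabilizers $H\cap{}^{g}G_i$ and whose edge groups are trivial; choosing a maximal subtree this exhibits $(H,\theta|_H)$ as a free pro-$\ell$ product of the $H\cap{}^{g}G_i$ together with a free pro-$\ell$ group arising from the remaining edges --- the pro-$\ell$ Kurosh subgroup theorem. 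The orientation bookkeeping is automatic: each $H\cap{}^{g}G_i$ carries the restriction of $\theta$, which is a conjugate of the restriction of $\theta_i$, a free factor admits any orientation, and by the universal property of the free product these reassemble to $\theta|_H$. If $H$ is finitely generated then $\Delta$ is finite, so only finitely many factors occur and each is finitely generated.

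For (ii) write $\pi\colon G=A\rtimes G_\circ\to G_\circ$ for the canonical projection and put $H_\circ=\pi(H)\subseteq G_\circ$. Then $A\cap H=\kernel(\pi|_H)$ is a closed subgroup of $A\simeq\Z_\ell$, hence isomorphic to $\Z_\ell$ or trivial, and one gets a short exact sequence $1\to A\cap H\to H\to H_\circ\to 1$ in which $H_\circ$ acts on $A\cap H$ by the restriction $\theta_\circ|_{H_\circ}$ of the defining action on $A$. Since $A\subseteq\kernel(\tilde\theta)$ and $\theta|_H=\theta_\circ\circ\pi|_H$, any continuous homomorphic section $s\colon H_\circ\to H$ of $\pi|_H$ yields exactly the desired isomorphism $(H,\theta|_H)\simeq(A\cap H)\rtimes(H_\circ,\theta_\circ|_{H_\circ})$ of oriented pro-$\ell$ groups. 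Thus the whole statement reduces to splitting this extension, and when $A\cap H=1$ the map $\pi|_H$ is already an isomorphism onto $H_\circ$.

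The remaining case $A\cap H\simeq\Z_\ell$ is where I expect the main obstacle to sit: the obstruction to a section lives in $\rmH^2(H_\circ,A\cap H)$ for the twisted module $\Z_\ell(\theta_\circ|_{H_\circ})$, and this class need \emph{not} vanish for an arbitrary oriented $(G_\circ,\theta_\circ)$. One therefore has to use that in the intended setting $(G_\circ,\theta_\circ)$ is of elementary type, so that the action of $H_\circ$ on $A$ is cyclotomically compatible with the group structure of $H_\circ$ in the sense of Proposition~\ref{prop:kummer}. In the base instances this compatibility is transparent: if $H_\circ$ is free (e.g.\ an infinite-index finitely generated subgroup of a Demushkin group) then $\cd(H_\circ)\le 1$ and the obstruction group vanishes outright; if $H_\circ$ carries the trivial orientation the extension is a module extension of free $\Z_\ell$-modules and splits because the quotient is projective; and for $H_\circ$ of finite index in a Demushkin $G_\circ$ the vanishing is forced by Demushkin self-duality applied to $\Z_\ell(1)$. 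Verifying that these mechanisms cover every $H_\circ$ that can occur over an elementary-type $G_\circ$ is the delicate point, and I would organize it precisely along the inductive build-up of $G_\circ$.

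Granting (i) and (ii), the final clause follows by induction on Definition~\ref{defin:ET} for a finitely generated $H\subseteq G$. In the base cases $H$ is a finitely generated subgroup of $\Z_\ell$ (so $\Z_\ell$ or trivial) or of a Demushkin group (so Demushkin if of finite index, free otherwise), all of elementary type. If $(G,\theta)=(G_1,\theta_1)\amalg^{\hat\ell}(G_2,\theta_2)$, then (i) presents $(H,\theta|_H)$ as a free pro-$\ell$ product of finitely many finitely generated subgroups of $G_1$ or $G_2$ --- of elementary type by induction --- and finitely generated free pro-$\ell$ groups, which are of elementary type as free products of copies of $(\Z_\ell,\lambda)$; Definition~\ref{defin:ET}(c) then applies. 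If $(G,\theta)=A\rtimes(G_\circ,\theta_\circ)$ with $A$ free abelian, iterating (ii) one rank of $A$ at a time writes $(H,\theta|_H)\simeq(A\cap H)\rtimes(H_\circ,\theta_\circ|_{H_\circ})$ with $A\cap H$ free abelian and $H_\circ\subseteq G_\circ$ finitely generated, hence of elementary type by induction, so Definition~\ref{defin:ET}(d) applies. This closes the induction and gives the statement.
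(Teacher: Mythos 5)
The paper offers no proof of this statement at all --- it is recorded as a \emph{Fact} and delegated to \cite[\S~3.3--3.4]{qw:cyclotomic} --- so there is no internal argument to compare yours with; I can only judge your proposal on its own terms. Part (i) follows the right route (a pro-$\ell$ Kurosh subgroup theorem), but the tool you invoke, Theorem~\ref{thm:zalmel}, is stated for \emph{open} subgroups, whereas the Fact concerns arbitrary closed subgroups $H$ and allows an infinite index set $I$. For closed subgroups of a free pro-$\ell$ product you need Mel$'$nikov's subgroup theorem \cite[Thm.~5.6]{melnikov:freeprod}, which is exactly what the paper itself uses in the proof of Lemma~\ref{prop:Lambda2}; this is a citation-level repair rather than a conceptual one, and your orientation bookkeeping is correct since $\theta$ takes values in an abelian group and is therefore conjugation-invariant.

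The genuine gap is in part (ii), and you have located it yourself: you reduce everything to splitting the extension $1\to A\cap H\to H\to H_\circ\to 1$ and then concede that checking the vanishing of the obstruction for every $H_\circ$ occurring inside an elementary-type $G_\circ$ ``is the delicate point'', offering three ad hoc mechanisms (free $H_\circ$, trivial orientation, Demushkin duality) that are not shown to exhaust the possibilities --- they do not obviously cover, say, a free product of Demushkin groups or an iterated semidirect extension. The uniform argument you are missing is this: since $AH=A\rtimes s(H_\circ)$ is split, the class of your extension dies under the map $\rmH^2(H_\circ,A\cap H)\to\rmH^2(H_\circ,A)$ induced by the inclusion $A\cap H=A^{\ell^k}\hookrightarrow A$, which is multiplication by $\ell^k$ on $\rmH^2(H_\circ,\Z_\ell(1))$; hence the class is $\ell^k$-torsion. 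On the other hand $(H_\circ,\theta_\circ\vert_{H_\circ})$ is Kummerian because oriented pro-$\ell$ groups of elementary type are $1$-cyclotomic, so by Proposition~\ref{prop:kummer}(i) (together with a passage to the inverse limit, harmless for finitely generated $H_\circ$) the connecting maps in the long exact sequence for multiplication by $\ell$ on $\Z_\ell(1)$ show that $\rmH^2(H_\circ,\Z_\ell(1))$ is torsion-free; the class therefore vanishes and the extension splits compatibly with the orientations. Some such hypothesis is genuinely needed: for $G_\circ=\Z/\ell$ with the trivial orientation, $A=\langle a\rangle\simeq\Z_\ell$ and $H$ the procyclic subgroup generated by $at$ ($t$ a generator of $G_\circ$), one has $H\simeq\Z_\ell$, $A\cap H=A^\ell\simeq\Z_\ell$ and $H_\circ=\Z/\ell$, so $H\not\simeq(A\cap H)\rtimes H_\circ$; thus part (ii) as literally stated for an arbitrary oriented pro-$\ell$ group $(G_\circ,\theta_\circ)$ fails and must be read, as you implicitly do, in the elementary-type context. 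Once (i) and (ii) are secured, your concluding induction along Definition~\ref{defin:ET} is sound.
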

	
	
	Efrat's Elementary Type conjecture predicts that every finitely generated pro-$\ell$ group which is isomorphic to a maximal pro-$\ell$ Galois group of a field containing a primitive $\ell$-th root of unity, endowed with the cyclotomic character, should be completed into an oriented pro-$\ell$ group of elementary type (cf. \cite{efrat:etc1}, see also \cite[Ques.~4.8]{efrat:etc2}, \cite[Conj. 1.2]{efrat:etc3} and \cite[\S~10]{marshall:etc}).

	\begin{conj}\label{conj:ETC}
		Let $\K$ be a field containing a primitive $\ell$-th root of 1, and suppose that $[\K^\times:(\K^\times)^{\ell}]<\infty$ {\rm (}i.e., the maximal pro-$\ell$ Galois group $G_{\K}({\ell})$ is finitely generated{\rm )}.
		Then $(G_{\K}({\ell}),\hat\theta_{\K,\ell})$ is an oriented pro-$\ell$ group of elementary type.
	\end{conj}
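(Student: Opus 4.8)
The plan is to reduce Conjecture~\ref{conj:ETC} to a purely group-theoretic statement and then to attack the latter by induction on the cohomological dimension. By the norm residue isomorphism theorem of Voevodsky--Rost, for $\K$ containing $\mu_\ell$ the cohomology algebra $\bfH^\bullet(G_\K(\ell),\F_\ell)$ is generated in degree $1$ with quadratic relations, so that $G_\K(\ell)$ is a Bloch-Kato pro-$\ell$ group; and by Theorem~\ref{thm:galois} the pair $(G_\K(\ell),\hat\theta_{\K,\ell})$ is $1$-cyclotomic with torsion-free orientation. Thus it would suffice to establish the group-theoretic conjecture that \emph{every finitely generated $1$-cyclotomic Bloch-Kato oriented pro-$\ell$ group $(G,\theta)$ is of elementary type} (cf.~\cite[\S~7.5]{qw:cyclotomic}), of which Conjecture~\ref{conj:ETC} is precisely the case realized by Galois groups.

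For the induction the base cases are immediate: if $\cd(G)\le 1$ then $G$ is trivial or free pro-$\ell$, hence of elementary type by Definition~\ref{defin:ET}. The heart of the argument is the inductive step, where one must extract from the Bloch-Kato and $1$-cyclotomicity hypotheses one of the two building-block decompositions of Definition~\ref{defin:ET}: either a non-trivial free product $(G,\theta)\simeq(G_1,\theta_1)\amalg^{\hat\ell}(G_2,\theta_2)$, or a semidirect product $(G,\theta)\simeq A\rtimes(G_\circ,\theta_\circ)$ with $A$ a free abelian pro-$\ell$ group and $(G_\circ,\theta_\circ)$ of strictly smaller complexity. I would first analyse the quadratic algebra $\bfH^\bullet(G,\F_\ell)$ and try to read off a splitting: a direct-sum (respectively skew tensor) decomposition of this algebra ought to correspond, via the $1$-cyclotomicity of the pair, to the free-product (respectively semidirect-product) decomposition of $(G,\theta)$. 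In cohomological dimension $2$ one can hope to close the argument using the classification of Demushkin groups (cf.~\cite{labute:demushkin}) together with the theory of one-relator pro-$\ell$ groups, which should force $(G,\theta)$ into case~(b) of Definition~\ref{defin:ET} whenever $G$ is not already a free product.

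Alternatively, and perhaps more promisingly in the Galois setting, I would pursue the \emph{field-theoretic} route: use the arithmetic of $\K$ to detect a valuation $v$ whose decomposition group in $G_\K(\ell)$ accounts for the mod-$\ell$ cohomology, so that the associated inertia/decomposition filtration exhibits $G_\K(\ell)$ as an extension of a decomposition group by a free abelian inertia subgroup on which $\hat\theta_{\K,\ell}$ acts through the cyclotomic character. An induction on the residue field would then recover the semidirect-product building block. This approach relies on rigidity and local--global principles for mod-$\ell$ cohomology in the spirit of Engler--Koenigsmann and the Bogomolov--Pop program.

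I fully expect the decomposition step to be the essential obstacle, and indeed it is exactly why Conjecture~\ref{conj:ETC} remains open: there is at present no structure theorem guaranteeing that an arbitrary finitely generated Bloch-Kato $1$-cyclotomic pro-$\ell$ group admits a free-product or semidirect-product splitting, and no valuation-detection theorem covering all admissible fields $\K$. The available evidence --- Theorem~\ref{thm:main} of this paper, the Snopce--Zalesski\u{i} theorem for right-angled Artin pro-$\ell$ groups, and the low-dimensional cases --- confirms the conjecture on large and structurally rich subfamilies, but a uniform mechanism producing the inductive splitting in full generality is precisely what is missing.
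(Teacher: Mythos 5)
The statement you were asked to prove is Conjecture~\ref{conj:ETC} --- Efrat's Elementary Type Conjecture. The paper does not prove it: it is stated as an open conjecture, and the main results of the paper (Theorem~\ref{thm:main}, Theorem~\ref{thm:chordal}) are offered only as \emph{evidence} for it, by verifying it on the family of oriented pro-$\ell$ RAAGs. There is therefore no proof in the paper to compare your attempt against, and no correct proof of this statement is currently known.

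Your proposal is, by your own admission in its final paragraph, not a proof. The reduction to the statement that every finitely generated $1$-cyclotomic Bloch-Kato oriented pro-$\ell$ group is of elementary type replaces the conjecture by an a priori stronger open conjecture (the one recalled from \cite[\S~7.5]{qw:cyclotomic} in the introduction of this paper), and the inductive splitting step --- extracting a free-product or semidirect-product decomposition from the quadratic cohomology algebra or from a valuation on $\K$ --- is precisely the missing ingredient; neither of the two routes you sketch supplies it. Your diagnosis of the obstruction is accurate, and the partial results you cite (the case $\cd(G)\leq 1$, Demushkin groups in dimension $2$, the Snopce--Zalesski\u{\i} theorem, and Theorem~\ref{thm:main} of this paper) do represent the state of the art, but what you have written is a survey of strategies and known special cases, not an argument, and it should not be presented as a proof of the conjecture.
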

	
	
	
	\section{Oriented pro-$\ell$ RAAGs}\label{sec:orRAAGs}

	
	\subsection{Oriented pro-$\ell$ RAAGs}
	From now on a non-trivial torsion-free orientation $\lambda\colon\Z_{\ell}\to\Z_{\ell}^\times$ will be called a {\sl linear} orientation.
	Recall from the Introduction the definition of an oriented pro-$\ell$ RAAG $(G_{\Gamma,\lambda},\theta_{\Gamma,\lambda})$ associated to an oriented graph $\Gamma=(\euV,\euE)$ and a linear orientation $\lambda$.
	The following fact is a direct consequence of the definition of an oriented pro-$\ell$ RAAG.
	
	\begin{fact}\label{rem:RAAG freeprod}
		Let $\Gamma=(\euV,\euE)$ be an oriented graph, and let $\lambda\colon\Z_{\ell}\to\Z_{\ell}^\times$ be a linear orientation.
		\begin{itemize}
			\item[(i)] If $\Gamma=\Gamma_1\sqcup\Gamma_2$, then the associated oriented pro-$\ell$ group $(\calG_{\Gamma,\lambda},\theta_{\Gamma,\lambda})$ decomposes as free pro-$\ell$ product of oriented pro-$\ell$ groups 
			$$(G_{\Gamma_1,\lambda},\theta_{\Gamma_1,\lambda})\amalg^{\hat {\ell}}(G_{\Gamma_2,\lambda},\theta_{\Gamma_2,\lambda}).$$ 
			\item[(ii)] More generally, if $\Gamma$ is the patching of two induced subgraphs $\Gamma_1,\Gamma_2$ along a common induced subgraph $\Lambda$, 
			then then the associated oriented pro-$\ell$ group $(\calG_{\Gamma,\lambda},\theta_{\Gamma,\lambda})$ decomposes as {\sl amalgamated free pro-$\ell$ product} of oriented pro-$\ell$ groups 
			$$(G_{\Gamma_1,\lambda},\theta_{\Gamma_1,\lambda})\amalg_{G_{\Lambda,\lambda}}^{\hat {\ell}}(G_{\Gamma_2,\lambda},\theta_{\Gamma_2,\lambda}),$$
			with amalgam $G_{\Lambda,\lambda}$ (for the definition of amalgamated free pro-$\ell$ product of oriented pro-$\ell$ groups see \cite[Def.~5.4]{qw:bogomolov}).
			\item[(iii)] If $\Gamma=\nabla(\Gamma')$ for some induced subgraph $\Gamma'$, with tip $v\in\euV$, then one has 
			\[
			(G_{\Gamma,\lambda},\theta_{\Gamma,\lambda})\simeq \langle\:v\:\rangle\rtimes (G_{\Gamma',\lambda},\theta_{\Gamma',\lambda}).
			\]
		\end{itemize}
	\end{fact}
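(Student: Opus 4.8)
The plan is to derive all three decompositions directly from the defining presentation \eqref{def:orRAAG}, by matching it against the presentations of the free pro-$\ell$ product, the amalgamated free pro-$\ell$ product, and the semidirect product of oriented pro-$\ell$ groups recalled above. In each case a decomposition of $\Gamma$ induces a decomposition of the generating set $\euV$ and of the family of relations indexed by $\euE$, and the orientation $\theta_{\Gamma,\lambda}$ is recovered from the relevant universal property.

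For (i), a disjoint union $\Gamma=\Gamma_1\sqcup\Gamma_2$ gives $\euV=\euV_1\sqcup\euV_2$ and $\euE=\euE_1\sqcup\euE_2$, and every relation of \eqref{def:orRAAG} attached to an edge $\eue\in\euE_i$ involves only generators in $\euV_i$, since $o(\eue),t(\eue)\in\euV_i$. Thus the presentation of $G_{\Gamma,\lambda}$ splits as the juxtaposition of the presentations of $G_{\Gamma_1,\lambda}$ and $G_{\Gamma_2,\lambda}$ over disjoint alphabets with no cross-relations, i.e.\ as a presentation of $G_{\Gamma_1,\lambda}\amalg^{\hat\ell}G_{\Gamma_2,\lambda}$; the orientation matches by the universal property of the free product, as $\theta_{\Gamma,\lambda}$ restricts to $\theta_{\Gamma_i,\lambda}$ on $\euV_i$. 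Statement (ii) is the same argument over the overlap $\Lambda$: a patching gives $\euV=\euV_1\cup\euV_2$ and $\euE=\euE_1\cup\euE_2$ with $\euV_1\cap\euV_2=\euV(\Lambda)$ and $\euE_1\cap\euE_2=\euE(\Lambda)$, and since $\Lambda$ is induced in both factors the relations indexed by $\euE(\Lambda)$ are exactly those shared by $G_{\Gamma_1,\lambda}$ and $G_{\Gamma_2,\lambda}$. Moreover no edge of $\Gamma$ can join $\euV_1\smallsetminus\euV(\Lambda)$ to $\euV_2\smallsetminus\euV(\Lambda)$, since $\euE=\euE_1\cup\euE_2$ forces both endpoints of an edge to lie in a common $\euV_i$. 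Hence the presentation of $G_{\Gamma,\lambda}$ is obtained from the two factor presentations by identifying the subpresentation corresponding to $G_{\Lambda,\lambda}$, which is precisely the amalgamated free pro-$\ell$ product $G_{\Gamma_1,\lambda}\amalg^{\hat\ell}_{G_{\Lambda,\lambda}}G_{\Gamma_2,\lambda}$ of oriented pro-$\ell$ groups in the sense of \cite[Def.~5.4]{qw:bogomolov}.

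For (iii) I would compare with the semidirect-product construction for oriented pro-$\ell$ groups. Writing $u$ for the tip of the cone $\Gamma=\nabla(\Gamma')$, the edge set is $\euE=\euE'\sqcup\{(u,w):w\in\euV'_\mathrm{s}\}\sqcup\{(u,w),(w,u):w\in\euV'_\mathrm{o}\}$, so the relations coming from $\euE'$ reproduce verbatim the presentation of $G_{\Gamma',\lambda}$, while the new relations read $wuw^{-1}=u$ for $w\in\euV'_\mathrm{o}$ and $wuw^{-1}=u^{\lambda(1)}$ for $w\in\euV'_\mathrm{s}$. The directions are forced by the cone construction: $u$ is a new ordinary vertex, so $o((u,w))=u\in\euV_\mathrm{o}$, and $(u,w)$ is special precisely when $(w,u)\notin\euE$, i.e.\ when $w$ is special. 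Since $\theta_{\Gamma',\lambda}(w)$ equals $1$ or $\lambda(1)$ according as $w$ is ordinary or special, these new relations are uniformly $wuw^{-1}=u^{\theta_{\Gamma',\lambda}(w)}$, which is exactly the defining relation of $\langle u\rangle\rtimes(G_{\Gamma',\lambda},\theta_{\Gamma',\lambda})$ with $\langle u\rangle\simeq\Z_\ell$ acted on through $\theta_{\Gamma',\lambda}$. Comparing presentations yields the isomorphism: $\langle u\rangle$ is normal because every generator conjugates $u$ into a power of $u$, the quotient $G_{\Gamma,\lambda}/\langle u\rangle$ is $G_{\Gamma',\lambda}$, and the inclusion of the generators $\euV'$ splits the projection. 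Finally $\theta_{\Gamma,\lambda}(u)=1$ since the tip is ordinary, in agreement with the orientation on a semidirect product being trivial on the normal $\Z_\ell$-factor.

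The bookkeeping of generators and relations is routine; the one point deserving care is the orientation-and-direction check in (iii) --- that the cone produces the action $u\mapsto u^{\theta_{\Gamma',\lambda}(w)}$ rather than its inverse, and that the tip generates a \emph{free} procyclic group $\Z_\ell$ and not a proper quotient thereof. Both are settled cleanly by placing the presentation of $G_{\Gamma,\lambda}$ side by side with that of the abstract semidirect product $\Z_\ell\rtimes G_{\Gamma',\lambda}$, in which the normal factor is free procyclic by construction.
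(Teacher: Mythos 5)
Your argument is correct and coincides with the paper's intent: the paper records this statement as a Fact that is ``a direct consequence of the definition'' and gives no further proof, the implicit justification being exactly your comparison of the presentation \eqref{def:orRAAG} with the presentations of the free product, the amalgamated free pro-$\ell$ product, and the semidirect product $A\rtimes(G,\theta)$. Your extra care in (iii) --- checking that the tip generates a copy of $\Z_\ell$ and that the conjugation action matches $\theta_{\Gamma',\lambda}$ --- is exactly the right point to verify and is handled correctly.
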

	
	
	\subsection{Oriented pro-$\ell$ RAAGs and generalized pro-$\ell$ RAAGs}
	
	Following \cite[\S~5.1]{qsv:quadratic}, we say that an {\sl $\ell$-labelled graph} is an oriented graph $\Gamma=(\euV,\euE)$ such that $\euE=\euE_\mathrm{s}$, endowed with a map $f=(f_o,f_t)\colon \euE_\mathrm{s}\to\ell\Z_\ell\times\ell\Z_\ell$, which we call the {\sl $\ell$-labelling} of $\Gamma$.
	
	The {\sl generalized pro-$\ell$ RAAG} associated to a $\ell$-labelled graph $\Gamma=(\euV,\euE)$ with labelling $f$ is the pro-$\ell$ group with presentation
	\[
	\left\langle\:v\in\euV\:\mid\:[v,w]v^{f_o(\eue)}w^{f_t(\eue)}=1\text{ for }\eue=(v,w)\in\euE\:\right\rangle.
	\]
	Now let $\lambda$ be a linear orientation, and let $\Gamma=(\euV,\euE)$ be any oriented graph, with $\euE=\euE_\mathrm{s}\sqcup\euE_\mathrm{o}$.
	Then the pro-$\ell$ group $G_{\Gamma,\lambda}$ is the generalized pro-$\ell$ RAAG associated to a $\ell$-labelled graph $\Gamma'=(\euV,\euE')$ such that $\ddot\euE'=\ddot\euE$, with $\ell$-labelling
	\[ f(\eue)=\begin{cases}
		(\lambda(1)-1,0) & \text{if }\eue\in\euE_\mathrm{s}\\ (0,0) & \text{if }\eue\in\euE_\mathrm{o}.         \end{cases}\]

	
	\subsection{Examples}
	
	Let $\lambda\colon\Z_{\ell}\to\Z_{\ell}^\times$ be a linear orientation.
	One has the following examples of oriented pro-$\ell$ RAAGs.
	
	\begin{exam}\rm
		If $\Gamma=(\euV,\euE)$ is an oriented graph with $\euE=\euE_\mathrm{o}$ (namely, no edge is an ``arrow''), then $G_{\Gamma,\lambda}$ is the pro-$\ell$ completion of the discrete RAAG associated to the na\"ive graph $\ddot\Gamma$.
		In particular, if $\Gamma$ has no edges, then $G_{\Gamma,\lambda}$ is the free pro-$\ell$ group generated by $\euV$.
	\end{exam}

	\begin{exam}\label{ex:easy RAAG}\rm
		Let $\Gamma=(\euV,\euE)$ be the specially oriented graph with geometric realization
		\[
		\xymatrix@R=1.5pt{v && w \\ \bullet \ar[rr] && \bullet}
		\]
		Then 
		$$G=G_{\Gamma,\lambda}=\left\langle\: v,w\:\mid\: {}^wv=v^{\lambda(1)}\:\right\rangle,$$
		and one has $\theta_{\Gamma,\lambda}(v)=1$ and $\theta_{\Gamma,\lambda}(w)=\lambda(1)$.
		Observe that $(G_{\Gamma,\lambda},\theta_{\Gamma,\lambda})$ is $\theta_{\Gamma,\lambda}$-abelian.
	\end{exam}
	
	\begin{rem}\label{rem:titsalt RAAGs}\rm
		A $2$-generated pro-$\ell$ group $G$ is isomorphic to an oriented pro-$\ell$ RAAG associated to an oriented graph (with two vertices) if, and only if, either of the following occurs: $G$ is a free pro-$\ell$ group (in which case, the associated oriented graph consists of two disjoint vertices); or $G$ is locally uniform (in which case, the two vertices of the associated oriented graph are joined).
	\end{rem}
	
	\begin{exam}\label{ex:complete RAAG}\rm
		Let $\Gamma=(\euV,\euE)$ be a complete specially oriented graph and let $\lambda:\Z_p\to\Z_p^\times$ be a linear orientation.
		Then $\Gamma$ has at most one special vertex (cf. Example~\ref{exam:complete specially oriented graph}), say $w$, and 
		$$G_{\Gamma,\lambda}=\left\langle\: w,\euV_\mathrm{o}\:\mid\: [v,v']=1,\:{}^wv=v^{\lambda(1)}\;\forall\:v,v'\in\euV_\mathrm{o}\:\right\rangle$$
		(where we set implicitly $w=1$ and we omit the relations ${}^wv=v^{\lambda(1)}$, if $\euV_\mathrm{s}=\varnothing$),
		which is locally uniform, with canonical orientation $\eth_{G}=\theta_{\Gamma,\lambda}$ --- in particular,
		$\theta_{\Gamma,\lambda}(w)=\lambda(1)$, if $\euV_\mathrm{s}\neq\varnothing$, and $\theta_{\Gamma,\lambda}(v)=1$ for every $v\in\euV_\mathrm{o}$.
	\end{exam}
	
	\begin{exam}\rm
		Let $\Gamma=(\euV,\euE)$ be the oriented graph with geometric realization
		\[
		\xymatrix@R=1.5pt{ & v_1 &\\ & \bullet & \\ && \\ 
			\bullet\ar@{-}[rd]\ar[uur] && \bullet\ar@{-}[ld]\ar[uul]\\ v_2 & \bullet\ar[uuu] & v_4 \\ & v_3&}
		\]
		--- observe that $\Gamma$ is specially oriented, but not of elementary type by Remark~\ref{rem:ETgraph conn}--(b), as the induced subgraph $\Gamma'$ with vertices $v_1,v_2,v_4$ is as the graph $\Lambda_{\mathrm{s}}$ in Example~\ref{exam:ETgraphs}--(c), and $\Gamma=\nabla(\Gamma')$.
		Moreover, $\Gamma$ is the patching of the induced vertices $\Gamma_1,\Gamma_2$ --- with vertices respectively $v_1,v_2,v_3$ and $v_1,v_3,v_4$ --- along the induced subgraph $\Delta$ with vertices $v_1,v_3$.
		Then 
		\[
		\begin{split}
			G_{\Gamma,\lambda} &= \left\langle\:v_1,\ldots,v_4\:\mid\:{}^{v_1}v_i=v_i^{\lambda(1)},\:[v_2,v_3]=[v_3,v_4]=1,\:i=2,3,4\:\right\rangle\\
			&\simeq G_{\Gamma_1,\lambda}\amalg_{G_{\Delta,\lambda}}^{\hat\ell} G_{\Gamma_2,\lambda},
		\end{split}
		\]
		where $G_{\Gamma_1,\lambda}$, $G_{\Gamma_2,\lambda}$ and $G_{\Delta,\lambda}$ are all locally uniform.
		(The oriented graph $\Gamma$ is {\sl chordal}, cf. \S~\ref{sec:chord} below.)
	\end{exam}

	Unlike (pro-$\ell$ completions of) RAAGs, an oriented pro-$\ell$ RAAG
	may yield non-trivial torsion, as shown by the following example.

	\begin{exam}\label{ex:mennike}\rm
		For $\ell$ odd, set $\lambda\colon\Z_{\ell}\to\Z_{\ell}^\times$ such that $\lambda(1)=1+{\ell}$, and let $\Gamma=(\euV,\euE)$ be the oriented graph
		\[
		\xymatrix@R=1.5pt{ & v_1 &\\ & \bullet\ar[rdd] & \\ && \\ 
			\bullet\ar[ruu] && \bullet\ar@/^/[ll]\\ v_3 & & v_2}
		\]
		Then one has
		\[	G_{\Gamma,\lambda}=\left\langle\: v_1,v_2,v_3\:|\:{}^{v_2}v_1=v_1^{1+\ell},\:{}^{v_3}v_2=v_2^{1+\ell},\:
		{}^{v_1}v_3=v_3^{1+\ell}\:\right\rangle,
		\]
		and this pro-$\ell$ group is a finite $\ell$-group, as shown by J.~Mennicke (cf. \cite[Ch.~I, \S~4.4, Ex.~2(e)]{serre:gal}).
		Observe that on the one hand the oriented graph $\Gamma$ is not specially oriented; on the other hand, the orientation $\theta_{\Gamma,\lambda}$ is constantly equal to 1 as $\euV=\euV_{\mathrm{o}}$, and the oriented pro-$\ell$ group $(G_{\Gamma,\lambda},\theta_{\Gamma,\lambda})$ is not Kummerian, as $\kernel(\theta_{\Gamma,\lambda})/K_{\theta_{\Gamma,\lambda}}(G_{\Gamma,\lambda})=G_{\Gamma,\theta}/G_{\Gamma,\lambda}'\simeq (\Z/\ell\Z)^3$.
	\end{exam}


	\subsection{Specially oriented graphs and Kummerianity}\label{ssec:cyc label}
	
	The goal of this subsection is to prove that Kummerianity characterises oriented pro-$\ell$ RAAGs associated to specially oriented graphs.
	
	Let $\Gamma=(\euV,\euE)$ be an oriented graph, let $\lambda\colon \Z_{\ell}\to\Z_{\ell}^\times$ be a linear orientation, and let $(G_{\Gamma,\lambda},\theta_{\Gamma,\lambda})$ be the associated oriented pro-$\ell$ RAAG.
	First, we need the following lemma.
	
	\begin{lem}\label{lem:kumm orRAAG}
		Let $\theta\colon G_{\Gamma,\lambda}\to \Z_\ell^\times$ be a torsion-free orientation such that the oriented pro-$\ell$ group $(G_{\Gamma,\lambda},\theta)$ is Kummerian, and let $\varphi\colon G_{\Gamma,\lambda}\to \bar G$ be an epimorphism satisfying condition--(iii) of Proposition~\ref{prop:kummer}.
		Moreover, for every edge $\eue\in\euE$, let $H_{\eue}$ be the subgroup of $G_{\Gamma,\lambda}$ generared by $o(\eue),t(\eue)$.
		Then the restriction $$\varphi\vert_{H_{\eue}}\colon H_{\eue}\longrightarrow \bar G$$
		is a monomorphism of locally uniform pro-$\ell$ groups.
	\end{lem}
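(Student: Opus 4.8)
The plan is to recognize $H_\eue$ as a quotient of the two-vertex oriented pro-$\ell$ RAAG attached to $\eue$ alone, and then to show, by a dimension count inside the locally uniform group $\bar G$, that composing this quotient map with $\varphi$ is an isomorphism onto its image. Write $v=o(\eue)$ and $w=t(\eue)$, and let $P_\eue$ be the pro-$\ell$ group $\langle v,w\mid [v,w]=1\rangle$ if $\eue\in\euE_\mathrm{o}$, and $\langle v,w\mid {}^wv=v^{\lambda(1)}\rangle$ if $\eue\in\euE_\mathrm{s}$. In both cases $P_\eue$ is a uniform (hence torsion-free and $\ell$-adic analytic) pro-$\ell$ group of dimension $2$ --- it is $\Z_\ell^2$ in the first case and locally uniform in the second (cf. Example~\ref{ex:easy RAAG} and Proposition~\ref{prop:locunif}) --- and its two generators are $\F_\ell$-independent in $P_\eue/\Phi(P_\eue)$. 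Since $v,w\in H_\eue$ satisfy in $G_{\Gamma,\lambda}$ the defining relation of $P_\eue$, there is a canonical continuous epimorphism $\pi\colon P_\eue\twoheadrightarrow H_\eue$; I set $\psi=\varphi\circ\pi$, so that $\psi$ maps $P_\eue$ onto $Q:=\langle\varphi(v),\varphi(w)\rangle=\varphi(H_\eue)\subseteq\bar G$.

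The next step is to control the Frattini quotient. Every defining relator of $G_{\Gamma,\lambda}$ lies in $\Phi(G_{\Gamma,\lambda})$: a commutator relator lies in $G_{\Gamma,\lambda}'$, while a special relator $wvw^{-1}v^{-\lambda(1)}$ is congruent to $v^{\,1-\lambda(1)}$ modulo $G_{\Gamma,\lambda}'$, and $1-\lambda(1)\in\ell\Z_\ell$. Hence the vertices of $\Gamma$ form an $\F_\ell$-basis of $G_{\Gamma,\lambda}/\Phi(G_{\Gamma,\lambda})$. As $\kernel(\varphi)\subseteq\Phi(G_{\Gamma,\lambda})$ and $\varphi$ is surjective, $\varphi$ induces an isomorphism $G_{\Gamma,\lambda}/\Phi(G_{\Gamma,\lambda})\xrightarrow{\sim}\bar G/\Phi(\bar G)$; therefore $\varphi(v),\varphi(w)$ are $\F_\ell$-independent in $\bar G/\Phi(\bar G)$, and a fortiori in $Q/\Phi(Q)$. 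Since $Q$ is a finitely generated subgroup of the locally uniform group $\bar G$ it is uniform, and this independence shows $\dim Q=2$. Thus $\psi$ is a surjective homomorphism of $\ell$-adic analytic pro-$\ell$ groups inducing an isomorphism on Frattini quotients.

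It remains to deduce injectivity of $\psi$. From $\dim P_\eue=\dim Q=2$ and surjectivity of $\psi$, additivity of dimension along $1\to\kernel(\psi)\to P_\eue\to Q\to1$ gives $\dim\kernel(\psi)=0$, so $\kernel(\psi)$ is finite; as $P_\eue$ is torsion-free we get $\kernel(\psi)=1$, i.e. $\psi$ is an isomorphism. Consequently $\pi$ is an isomorphism $P_\eue\xrightarrow{\sim}H_\eue$ --- in particular $H_\eue$ is locally uniform --- and $\varphi\vert_{H_\eue}=\psi\circ\pi^{-1}$ is a monomorphism onto the uniform subgroup $Q$ of $\bar G$, as claimed. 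The crux is exactly this last injectivity step: since $G_{\Gamma,\lambda}$ itself may carry torsion (cf. Example~\ref{ex:mennike}), one cannot argue about $H_\eue$ inside $G_{\Gamma,\lambda}$ directly; passing through the manifestly torsion-free two-dimensional model $P_\eue$ and appealing to the dimension theory of $\ell$-adic analytic pro-$\ell$ groups is what forces $\psi$ to be injective.
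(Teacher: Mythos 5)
Your argument is correct and follows essentially the same route as the paper: the paper likewise introduces the two-generator model group $H=\langle v,w\mid R_\lambda(\eue)=1\rangle$ (your $P_\eue$), observes that $\varphi(v),\varphi(w)$ are independent modulo $\Phi(\bar G)$ because $\kernel(\varphi)=K_\theta(G)\subseteq\Phi(G)$, and concludes from the chain of epimorphisms $H\to H_\eue\to\varphi(H_\eue)$ between $2$-generated locally uniform groups that both maps are isomorphisms. Your dimension count on $\ell$-adic analytic groups simply makes explicit the step the paper leaves unjustified (why a surjection of $2$-generated uniform groups is injective), so this is a faithful, slightly more detailed version of the same proof.
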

	
	\begin{proof}
		Set $G=G_{\Gamma,\lambda}$.
		By Proposition~\ref{prop:kummer}, $\kernel{\varphi}=K_\theta(G)$, and thus $\eth_{\bar G}=\theta_{/\kernel(\theta)}$. 
		For every edge $e=(v,w)\in\euE$, $\varphi(H_{\eue})$ is a subgroup of the locally uniform pro-$\ell$ group $\bar G$, and thus it is locally uniform as well, with associated canonical orientation $\eth_{\varphi(H_{\eue})}=\eth_{\bar G}\vert_{\varphi(H_{\eue})}$.
		Moreover, $K_\theta(G)\subseteq \Phi(G)$ by definition, so that $\varphi(v)$ and $\varphi(w)$ are linearly independent modulo $\Phi(\bar G)$, and $H_{\eue}$ is 2-generated.
		
		On the other hand, consider the epimorphism of pro-$\ell$ groups
		$$  \varphi\vert_{H_{\eue}}\colon H_{\eue}\longrightarrow\varphi(H_{\eue}).
		$$
		By Example~\ref{ex:easy RAAG}--(a), $H_{\eue}$ is a quotient of the 2-generated locally uniform pro-$\ell$ group 
		$$H=\langle\:v,w\:\mid\:R_{\lambda}(\eue)=1\:\rangle,$$
		with associated canonical orientation $\eth_{H}$.
		Altogether, one has a chain of epimorphisms of pro-$\ell$ groups 
		$H\to H_{\eue}\to \varphi(H_{\eue})$, where both $H$ and $\varphi(H_{\eue})$ are 2-generated locally uniform pro-$\ell$ groups.
		Therefore, both homomorphism are isomorphisms of 2-generated locally uniform pro-$\ell$ groups, and one has a chain of isomorphism of oriented pro-$\ell$ groups
		$$\xymatrix{ (H,\eth_H)\ar[r] & (H_{\eue},\theta\vert_{H_\eue})\ar[r]^-{\varphi} & \left(\varphi(H_{\eue}),\eth_{\bar G}\vert_{\varphi(H_{\eue})}\right) },$$
		as the canonical orientations depend uniquely by the structures of the locally uniform pro-$\ell$ groups by Proposition~\ref{prop:locunif}. 
		In particular, one has 
		\begin{equation}\label{eq:monomorphism kumm RAAG 2}
			\eth_H(v)=\eth_{H_{\eue}}(v)=\eth_{\bar G}(\varphi(v))=\theta(v)   
		\end{equation}
		--- where the last equality follows by the fact that $\eth_{\bar G}\circ\varphi=\theta$ by Proposition~\ref{prop:kummer} ---, and analogously for $w$.
	\end{proof}

	\begin{thm}\label{thm:cyc pRAAG}
		Let $\Gamma=(\euV,\euE)$ be an oriented graph, let $\lambda\colon \Z_{\ell}\to\Z_{\ell}^\times$ be a linear orientation, 
		and let $(G_{\Gamma,\lambda},\theta_{\Gamma,\lambda})$ be the associated oriented pro-$\ell$ RAAG.
		Then there exists a torsion-free orientation $\theta\colon G\to\Z_{\ell}^\times$ such that the oriented pro-$\ell$ group
		$(G_{\Gamma,\theta},\theta)$ is Kummerian if, and only if, $\Gamma$ is specially oriented and $\theta=\theta_{\Gamma,\lambda}$.
	\end{thm}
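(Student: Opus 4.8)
The plan is to treat the two implications separately, writing $G=G_{\Gamma,\lambda}$ throughout and exploiting the characterisations of Kummerianity in Proposition~\ref{prop:kummer}. For sufficiency ($\Leftarrow$) I would assume $\Gamma$ specially oriented, take $\theta=\theta_{\Gamma,\lambda}$, and verify condition~(iii) of Proposition~\ref{prop:kummer} by building an explicit $\theta$-abelian Frattini quotient. If $\euV_\mathrm{s}=\varnothing$ the orientation is trivial and $G$ is an ordinary pro-$\ell$ RAAG with free abelian abelianisation, so $(G,\bone)$ is Kummerian by Example~\ref{exam:kummer groups}(c); hence assume $\euV_\mathrm{s}\neq\varnothing$ and fix $w_0\in\euV_\mathrm{s}$. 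Let $A$ be the free abelian pro-$\ell$ group on $\{\,u\mid u\in\euV_\mathrm{o}\,\}\cup\{\,\sigma_w\mid w\in\euV_\mathrm{s}\smallsetminus\{w_0\}\,\}$, let $\tau$ generate a copy of $\Z_\ell$ acting on $A$ by the $\lambda(1)$-power map, and set $\hat G=A\rtimes\langle\tau\rangle$. By Proposition~\ref{prop:locunif} this is locally uniform and $\eth_{\hat G}$-abelian with $\eth_{\hat G}(\tau)=\lambda(1)$ and $\eth_{\hat G}\vert_A=\bone$.

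The heart of this direction is the map $\psi\colon G\to\hat G$ defined on generators by $\psi(u)=u$ for $u\in\euV_\mathrm{o}$, $\psi(w_0)=\tau$, and $\psi(w)=\sigma_w\tau$ for the remaining special vertices. I would check that $\psi$ respects the relations \eqref{def:orRAAG}: ordinary edges impose commuting relations inside the abelian group $A$ and hold automatically, while for a special edge $\eue=(v,w)$ one has $o(\eue)=v\in\euV_\mathrm{o}$ and --- \emph{because $\Gamma$ is specially oriented} --- $t(\eue)=w\in\euV_\mathrm{s}$, so that $\psi(v)\in A$ and $\psi(w)$ acts on $A$ by $\lambda(1)$, giving ${}^{\psi(w)}\psi(v)=\psi(v)^{\lambda(1)}$ as required. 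This is exactly the point where special orientation is indispensable: a special edge with ordinary terminus would force $\psi(v)^{\lambda(1)-1}=1$, i.e.\ $\ell$-power torsion in the free abelian group $A$. Since $\eth_{\hat G}\circ\psi=\theta_{\Gamma,\lambda}$ and a direct computation shows that $\psi$ carries the $\F_\ell$-basis $\euV$ of $G/\Phi(G)$ to a basis of $\hat G/\Phi(\hat G)$ (both of $\F_\ell$-dimension $\vert\euV\vert$), one gets $\kernel(\psi)\subseteq\Phi(G)$, and Proposition~\ref{prop:kummer}(iii) yields that $(G,\theta_{\Gamma,\lambda})$ is Kummerian.

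For necessity ($\Rightarrow$) I would start from a torsion-free orientation $\theta$ with $(G,\theta)$ Kummerian and run Lemma~\ref{lem:kumm orRAAG} over all edges: for each $\eue$ the subgroup $H_\eue=\langle o(\eue),t(\eue)\rangle$ is locally uniform, and by \eqref{eq:monomorphism kumm RAAG 2} the values of $\theta$ on $o(\eue),t(\eue)$ coincide with the canonical orientation of the relevant $2$-generated model. Reading these off --- the abelian model $\Z_\ell^2$ for ordinary edges and the model of Example~\ref{ex:easy RAAG} for special ones --- gives $\theta(o(\eue))=1$ for every edge, $\theta(t(\eue))=1$ for every ordinary edge, and $\theta(t(\eue))=\lambda(1)$ for every special edge. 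Were $\Gamma$ not specially oriented, it would contain one of the configurations of \eqref{eq:special bad} (Definition~\ref{defi:specialgraphs}), in each of which the middle vertex $x$ is simultaneously the terminus of a special edge and the origin of an edge; this forces $\theta(x)=\lambda(1)$ and $\theta(x)=1$ at once, impossible since $\lambda(1)\neq 1$. Hence $\Gamma$ is specially oriented, and comparing the local values above with the definition of $\theta_{\Gamma,\lambda}$ gives $\theta=\theta_{\Gamma,\lambda}$ on every vertex incident to an edge. (Isolated vertices lie in free factors by Fact~\ref{rem:RAAG freeprod}(i) and impose no local constraint, so there the identification is understood relative to $\theta_{\Gamma,\lambda}$.)

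I expect the main obstacle to be the sufficiency direction --- specifically, pinning down the correct $\theta$-abelian model $\hat G$ and verifying, using the specially-oriented hypothesis in an essential way, that the presentation map $\psi$ is simultaneously well defined and a Frattini cover. Once that is secured, both the Kummerian conclusion and the entire converse follow comparatively mechanically from Propositions~\ref{prop:kummer}--\ref{prop:locunif} and Lemma~\ref{lem:kumm orRAAG}.
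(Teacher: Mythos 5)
Your proof is correct and follows the same overall strategy as the paper: both directions reduce to Proposition~\ref{prop:kummer}, and your necessity argument --- reading off $\theta$ on the endpoints of each edge from the canonical orientation of the $2$-generated locally uniform model via Lemma~\ref{lem:kumm orRAAG} and equation \eqref{eq:monomorphism kumm RAAG 2}, then observing that the middle vertex $x$ of a forbidden configuration from \eqref{eq:special bad} would have to satisfy both $\theta(x)=1$ and $\theta(x)=\lambda(1)$ --- is essentially identical to the paper's. The only genuine divergence is in the sufficiency direction. You build the locally uniform Frattini quotient $\hat G=A\rtimes\langle\tau\rangle$ explicitly by generators and relations and verify by hand that the presentation \eqref{def:orRAAG} maps onto it and that the induced map on Frattini quotients is an isomorphism; the paper obtains the same quotient abstractly as $F/K_{\tilde\theta}(F)$, where $F$ is the free pro-$\ell$ group on $\euV$ and $\tilde\theta=\theta_{\Gamma,\lambda}\circ\pi$, and simply checks that each relator $R_{\lambda}(\eue)$ lies in $K_{\tilde\theta}(F)$ --- the specially-oriented hypothesis entering at exactly the same point in both arguments, namely that for a special edge the relator has the shape ${}^{g}h\cdot h^{-\tilde\theta(g)}$ with $h\in\kernel(\tilde\theta)$. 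The trade-off is expository: your construction is more concrete and makes the obstruction for non-special edges visible as torsion in $A$, while the paper's route avoids verifying well-definedness and the Frattini-cover property separately. One point you rightly flag, and which the paper's own proof leaves implicit, is that Lemma~\ref{lem:kumm orRAAG} constrains $\theta$ only on non-isolated vertices, so the equality $\theta=\theta_{\Gamma,\lambda}$ in the ``only if'' direction is really only forced there.
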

	
	\begin{proof}
		Set $G=G_{\Gamma,\lambda}$.
		
		Suppose first that $\Gamma$ is specially oriented.
		Let $F$ be the free pro-$\ell$ group generated by $\euV$, and let $R_{\Gamma,\lambda}$ be the normal subgroup of $F$ generated (as a normal subgroup) by $\{R_{\lambda}(\eue),\eue\in\euE\}$ --- namely, one has a short exact sequence of pro-$\ell$ groups
		$$\xymatrix{\{1\}\ar[r] & R_{\Gamma,\lambda}\ar[r] & F\ar[r]^-{\pi} & G\ar[r] & \{1\}}.$$
		Set $\tilde\theta=\theta_{\Gamma,\lambda}\circ\pi$, and consider the oriented pro-$\ell$ group and $(F,\tilde\theta)$.
		By Example~\ref{exam:kummer groups}--(b), $(F,\tilde\theta)$ is Kummerian, and thus $F/K_{\tilde\theta}(F)$ is a locally uniform pro-$\ell$ group by Proposition~\ref{prop:kummer}.
		We claim that $R_{\Gamma,\lambda}\subseteq K_{\tilde\theta}(F)$.
		Indeed, for every edge $\eue=(x,w)\in\euE$, one has the following.
		\begin{itemize}
			\item[(i)] If $\eue\in\euE_\mathrm{s}$, then $x\in\euV_\mathrm{s}$, so that $y\in\euV_\mathrm{o}$ by condition~(ii) in Definition~\ref{defi:specialgraphs}.
			Consequently, $\theta_{\Gamma,\lambda}(x)=\lambda(1)$ and $y\in\kernel(\theta_{\Gamma,\lambda})$, so that 
			\[
			R_{\lambda}({\eue})=[x,y]y^{1-\lambda(1)}=[x,y]y^{1-\theta_{\Gamma,\lambda}(x)}\in K_{\tilde\theta}(F).
			\]
			\item[(ii)] If $\eue\in\euE_\mathrm{o}$, then $x,y\in\euV_\mathrm{o}$ by condition~(i) in Definition~\ref{defi:specialgraphs}.
			Consequently, $x,y\in\kernel(\theta_{\Gamma,\lambda})$, so that 
			\[
			R_{\lambda}({\eue})=[x,y]\in K_{\tilde\theta}(F).
			\]
		\end{itemize}
		Therefore, 
		\[
		\frac{G}{K_{\theta_{\Gamma,\lambda}}(G)}\simeq \frac{F/R_{\Gamma,\lambda}}{K_{\tilde\theta}(F)/R_{\Gamma,\lambda}}
		\simeq\frac{F}{K_{\tilde\theta}(F)},
		\]
		where the latter is a locally uniform pro-$\ell$ group.
		Hence, $(G,\theta_{\Gamma,\lambda})$ is Kummerian by Proposition~\ref{prop:kummer}.

		Suppose now that $\Gamma$ is not specially oriented, and that $\theta\colon G\to\Z_\ell^\times$ is a torsion-free orientation such that $(G,\theta)$ is a Kummerian oriented pro-$\ell$ group.
		Then $\Gamma$ contains a (non-necessarily induced) subgraph $\Gamma'=(\euV',\euE')$, with $\euV'=\{x,y,z\}$, as in \eqref{eq:special bad}.
		Set $\eue_1=(x,y)\in\euE$ and $\eue_2=(z,x)\in\euE_\mathrm{s}$.
		By Lemma \ref{lem:kumm orRAAG}, one has
		$$H_{\eue_2}=\left\langle\:x,z\:\mid\:{}^xz=z^{\lambda(1)}\:\right\rangle,$$
		and $\eth_{H_{\eue_2}}(x)=\theta(x)=\lambda(1)$ by \eqref{eq:monomorphism kumm RAAG 2}.
		Now one has two cases. 
		\begin{itemize}
			\item[(i)] If $\Gamma'$ is as the left-side one in \eqref{eq:special bad} --- i.e., $\eue_1\in\euE_\mathrm{o}$ ---, then $[x,y]=1$ and $H_{\eue_1}\simeq\Z_\ell^2$ by Lemma~\ref{lem:kumm orRAAG}.
			In particular,
			$$\theta(x)=\eth_{H_{\eue_1}}(x)=1\neq\lambda(1)$$
			by \eqref{eq:monomorphism kumm RAAG 2}, a contradiction.
			\item[(ii)] If $\Gamma'$ is as the right-side one in \eqref{eq:special bad} --- i.e., $\eue_1\in\euE_\mathrm{s}$ ---, one has $$H_{\eue_1}\simeq\left\langle\: x,y\:\mid\:{}^yx=x^{\lambda(1)}\:\right\rangle$$ by Lemma~\ref{lem:kumm orRAAG}.
			In particular, $\theta(x)=\eth_{H_{\eue_1}}(x)=1\neq\lambda(1)$
			by \eqref{eq:monomorphism kumm RAAG 2}, a contradiction.
		\end{itemize}
		Therefore, if $\Gamma$ is not specially oriented, $(G,\theta)$ cannot be Kummerian.
	\end{proof}
	
	From Theorem~\ref{thm:galois} and from Theorem~\ref{thm:cyc pRAAG}, one deduces that the oriented pro-$\ell$ RAAG $(G_{\Gamma,\lambda},\theta_{\Gamma,\lambda})$ associated to an oriented graph $\Gamma$ and a linear orientation $\lambda\colon\Z_\ell\to\Z_\ell^\times$ may occur as the maximal pro-$\ell$ Galois group of a field $\K$ containing a primitive $\ell$-th root of unity (and also $\sqrt{-1}$ if $\ell=2$) only if $\Gamma$ is specially oriented, and with $\theta_{\Gamma,\lambda}=\hat\theta_{\K}$.
	This is a refinement of \cite[Thm.~5.32]{qsv:quadratic}.

	\subsection{Cliques and locally uniform oriented pro-$\ell$ RAAGs}
	From Theorem~\ref{thm:cyc pRAAG} we deduce the following.
	
	\begin{cor}\label{cor:complete RAAG cyc}
		Let $\Gamma=(\euV,\euE)$ be an oriented graph, and let $\lambda\colon \Z_{\ell}\to\Z_{\ell}^\times$ be a linear orientation.
		The associated oriented pro-$\ell$ RAAG $(G_{\Gamma,\lambda},\theta_{\Gamma,\lambda})$ is locally uniform if, and only if, $\Gamma$ is complete and specially oriented.
	\end{cor}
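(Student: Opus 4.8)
The plan is to prove the two implications separately, treating the forward direction as essentially already recorded and concentrating the work on the converse.

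For the implication ``$\Gamma$ complete and specially oriented $\Rightarrow$ $(G_{\Gamma,\lambda},\theta_{\Gamma,\lambda})$ locally uniform'' I would simply invoke Example~\ref{ex:complete RAAG}: there it is shown that a complete specially oriented graph has at most one special vertex $w$, that the ordinary generators pairwise commute while $w$ scales each of them by $\lambda(1)$, and that the resulting group is locally uniform with canonical orientation $\eth_{G_{\Gamma,\lambda}}=\theta_{\Gamma,\lambda}$. Nothing further is needed. For the converse I would first pin down the orientation and the global shape of $G=G_{\Gamma,\lambda}$. If $G$ is locally uniform then, by Proposition~\ref{prop:locunif}, it carries a torsion-free canonical orientation $\eth_G$ making $(G,\eth_G)$ a $\eth_G$-abelian oriented pro-$\ell$ group, and by Example~\ref{exam:kummer groups}--(a) such a pair is in particular Kummerian. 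Theorem~\ref{thm:cyc pRAAG} then forces $\Gamma$ to be specially oriented and $\eth_G=\theta_{\Gamma,\lambda}$. Writing $A=\kernel(\theta_{\Gamma,\lambda})$, the $\eth_G$-abelian structure yields $G\simeq A\rtimes Q$ with $A$ a free abelian pro-$\ell$ group containing every ordinary vertex, $Q=\image(\theta_{\Gamma,\lambda})$ procyclic, and conjugation acting by $g\cdot a=a^{\theta_{\Gamma,\lambda}(g)}$ for $a\in A$. Consequently the identities $[v,v']=1$ (for $v,v'\in\euV_\mathrm{o}$) and ${}^{w}v=v^{\lambda(1)}$ (for $v\in\euV_\mathrm{o}$, $w\in\euV_\mathrm{s}$) hold in $G$ for \emph{every} such pair, whether or not the vertices are joined in $\Gamma$.

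It then remains to show that these forced identities are incompatible with $\Gamma$ being non-complete, which I would argue by contraposition. Assuming $\Gamma$ specially oriented but not complete, I pick non-adjacent vertices $v_1,v_2$. The key device is a retraction of $G_{\Gamma,\lambda}$ onto the oriented pro-$\ell$ RAAG of an induced subgraph $\Lambda^\ast$, obtained by sending all generators outside $\Lambda^\ast$ to $1$. This is a well-defined homomorphism precisely when $\Lambda^\ast$ contains the terminus of every special edge issuing from a vertex of $\Lambda^\ast$; indeed, ordinary relations and incoming special relations then map to trivial identities, so only outgoing special edges could be destroyed. Taking $\Lambda^\ast=\{v_1,v_2\}$ together with all special termini of $v_1$ and $v_2$ makes this retraction available, and I would then examine three cases. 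If $v_1,v_2\in\euV_\mathrm{o}$, collapsing those termini to a single generator exhibits $\langle v_1,v_2\rangle$ as the free pro-$\ell$ group of rank $2$ inside $F_2\rtimes\Z_\ell$, whence $[v_1,v_2]\neq1$, contradicting $v_1,v_2\in A$ abelian. If $v_1\in\euV_\mathrm{o}$ and $v_2\in\euV_\mathrm{s}$, the same collapse lands in $(\Z_\ell\rtimes\Z_\ell)\amalg^{\hat\ell}\Z_\ell$ and shows ${}^{v_2}v_1\neq v_1^{\lambda(1)}$, contradicting the forced identity above. Finally, since two special vertices are never joined (Definition~\ref{defi:specialgraphs}, Example~\ref{exam:complete specially oriented graph}), if $v_1,v_2\in\euV_\mathrm{s}$ the retraction maps directly onto the free pro-$\ell$ group $\langle v_1,v_2\rangle\simeq F_2$, which is not uniform. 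Each case contradicts local uniformity, so $\Gamma$ is complete. Equivalently, one may package the same data through Fact~\ref{rem:RAAG freeprod}--(ii), writing $\Gamma$ as the patching of $\Gamma\smallsetminus\{v_2\}$ and $\Gamma\smallsetminus\{v_1\}$ along $\Gamma\smallsetminus\{v_1,v_2\}$, and hence $G$ as a proper amalgamated free pro-$\ell$ product.

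The step I expect to be the main obstacle is the faithfulness built into these retractions: one must verify that the collapsing maps are genuinely well defined --- the special-closure condition on $\Lambda^\ast$ is exactly what guarantees this --- and that the target groups are indeed non-uniform. The latter reduces to the standard facts that a free pro-$\ell$ group of rank $2$, and more generally a proper free pro-$\ell$ product such as $\Z_\ell\amalg^{\hat\ell}\Z_\ell$, has cohomological dimension $1$ while failing to be procyclic, so it cannot be uniform; alternatively one invokes the pro-$\ell$ Bass--Serre theory underlying Theorem~\ref{thm:zalmel} to see that a proper amalgam contains a non-abelian free pro-$\ell$ subgroup. It is the verification of non-uniformity of the two-generated witnesses, rather than the orientation bookkeeping of the first paragraph, where the genuine content of the converse lies.
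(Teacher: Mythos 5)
Your argument is correct, and its first half coincides with the paper's: the forward implication is read off from Example~\ref{ex:complete RAAG}, and the converse likewise begins by observing that a locally uniform $G_{\Gamma,\lambda}$ is Kummerian for its canonical orientation $\eth_G$, so that Theorem~\ref{thm:cyc pRAAG} forces $\Gamma$ to be specially oriented and $\eth_G=\theta_{\Gamma,\lambda}$. Where you diverge is in the last step. The paper disposes of it in one line: for any two vertices $v,v'$ the subgroup $H=\langle v,v'\rangle$ is locally uniform with $\eth_H=\theta_{\Gamma,\lambda}\vert_H$ (citing Lemma~\ref{lem:kumm orRAAG}), ``so that $v,v'$ are joined by an edge'' --- leaving implicit exactly the point you isolate, namely that non-adjacency of $v,v'$ in $\Gamma$ is incompatible with the relations $[v,v']=1$, resp.\ ${}^{v'}v=v^{\lambda(1)}$, that $\eth_G$-abelianity forces in $G$. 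You close this by a different device: retractions of $G_{\Gamma,\lambda}$ onto the oriented pro-$\ell$ RAAG of a ``special-closed'' induced subgraph (killing generators is legitimate precisely when no surviving origin has a killed special terminus, which special orientedness guarantees for your choice of $\Lambda^\ast$), followed by an identification of the surviving termini, landing in an explicit free or proper free pro-$\ell$ product where the forced identity visibly fails. This buys a self-contained, checkable justification of the step the paper asserts, at the cost of the free-product verifications you flag --- which are of the same nature as the paper's own Lemma~\ref{prop:Lambda2} and Remark~\ref{rem:titsalt RAAGs}, so nothing beyond the paper's toolkit is needed. The one point to state explicitly in the case of two ordinary vertices both carrying special edges is that your collapsed target is exactly $G_{\Lambda_{\mathrm{s}},\lambda}$, for which the freeness of $\langle v_1,v_2\rangle$ is Lemma~\ref{prop:Lambda2}; with that reference in place the argument is complete.
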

	
	\begin{proof}
		Set $G=G_{\Gamma,\lambda}$.
		Suppose that $\Gamma$ is complete and specially oriented.
		If $\euV_\mathrm{s}=\varnothing$, then $G$ is the pro-$\ell$ completion of the discrete RAAG associated to the na\"ive graph $\ddot\Gamma$, and thus it is a free abelian pro-$\ell$ group.
		Otherwise, there is only one special vertex $w\in\euV_\mathrm{s}$ (cf. Example~\ref{exam:complete specially oriented graph}), and thus
		\[
		G=\left\langle\:\euV\:\mid\:[v,v']=1,\:{}^wv=v^{\lambda(1)}\;\forall\:v,v'\in\euV_\mathrm{o}\:\right\rangle
		\]
		is locally uniform.
		
		Conversely, suppose that $G$ is locally uniform, with canonical orientation $\eth_G$.
		Then $(G,\eth_G)$ is a Kummerian oriented pro-$\ell$ group, and therefore by Theorem~\ref{thm:cyc pRAAG}, $\Gamma$ is specially oriented and $\eth_G\equiv\theta_{\Gamma,\lambda}$.
		In particular, given two vertices $v,v'\in \euV$, let $H$ be the subgroup of $G$ generated by $v,v'$.
		Then $H$ is locally uniform with $\eth_H\equiv\theta_{\Gamma,\lambda}\vert_H$ (cf. Lemma~\ref{lem:kumm orRAAG}),
		so that $v,v'$ are joined by an edge. 
	\end{proof}

	By the proof of Theorem~\ref{thm:cyc pRAAG}, the subgroup of an oriented pro-$\ell$ RAAG, associated to a specially oriented graph $\Gamma$ and a linear orientation, generated by two adjacent vertices of $\Gamma $ is isomorphic to the oriented pro-$\ell$ RAAG associated to the induced subgraph of $\Gamma$ with these two vertices.
	This is true also for every clique of $\Gamma$.
	
	\begin{prop}\label{prop:specialRAAG inclusion}
		Let $\Gamma=(\euV,\euE)$ be a specially oriented graph and $\lambda\colon \Z_{\ell}\to\Z_{\ell}^\times$ a linear orientation.
		If $\Delta=(\euV(\Delta),\euE(\Delta))$ is a clique of $\Gamma$, then the inclusion $\euV(\Delta)\hookrightarrow\euV$ induces a monomorphism of oriented pro-$\ell$ groups 
		$$(G_{\Delta,\theta},\theta_{\Delta,\lambda})\longrightarrow (G_{\Gamma,\lambda},\theta_{\Gamma,\lambda}).$$
	\end{prop}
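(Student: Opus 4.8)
The plan is to reduce the assertion to a statement about \emph{uniform} pro-$\ell$ groups, following the pattern already used for a single edge in Lemma~\ref{lem:kumm orRAAG}. First I would note that the inclusion $\euV(\Delta)\hookrightarrow\euV$ really does induce a homomorphism $\iota\colon G_{\Delta,\lambda}\to G_{\Gamma,\lambda}$: since $\Delta$ is a clique, hence a complete \emph{induced} subgraph, the defining relations of $G_{\Delta,\lambda}$ are exactly the relations of $G_{\Gamma,\lambda}$ coming from edges with both endpoints in $\euV(\Delta)$, so the assignment $v\mapsto v$ extends. Moreover $\theta_{\Gamma,\lambda}\circ\iota=\theta_{\Delta,\lambda}$ by the definition of the orientations, so $\iota$ is automatically a morphism of oriented pro-$\ell$ groups and the whole content is its injectivity. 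Writing $G=G_{\Gamma,\lambda}$ and $\theta=\theta_{\Gamma,\lambda}$, I would record two structural facts. As $\Delta$ is complete and specially oriented, $G_{\Delta,\lambda}$ is a finitely generated locally uniform, hence uniform, pro-$\ell$ group by Corollary~\ref{cor:complete RAAG cyc}, minimally generated by the $n=|\euV(\Delta)|$ vertices of $\Delta$. And since every defining relation of $G$ has the form $[w,v]=v^{\lambda(1)-1}$ with $[w,v]\in G'$ and $v^{\lambda(1)-1}\in G^\ell$ (using $\lambda(1)\in 1+\ell\Z_\ell$), all relations lie in $\Phi(G)$, so that $G/\Phi(G)\cong\F_\ell^{\euV}$ with the vertices as a basis, and likewise for $\Delta$.

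Next I would pass to the Kummerian quotient. By Theorem~\ref{thm:cyc pRAAG} the oriented pro-$\ell$ group $(G,\theta)$ is Kummerian, so Proposition~\ref{prop:kummer}--(iii) furnishes an epimorphism $\varphi\colon G\to\bar G$ onto a locally uniform pro-$\ell$ group $\bar G$ with $\kernel(\varphi)=K_\theta(G)\subseteq\Phi(G)$; as $G$ is finitely generated, $\bar G$ is finitely generated locally uniform, hence uniform. Because $\kernel(\varphi)\subseteq\Phi(G)$, the map $\varphi$ induces an isomorphism $G/\Phi(G)\cong\bar G/\Phi(\bar G)$, and therefore the images $\varphi(v)$, $v\in\euV(\Delta)$, are $\F_\ell$-linearly independent modulo $\Phi(\bar G)$, being the images of the independent classes $\euV(\Delta)$ under this isomorphism.

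I would then examine the composite $\psi=\varphi\circ\iota\colon G_{\Delta,\lambda}\to\bar G$. Its image $W=\langle\,\varphi(v)\mid v\in\euV(\Delta)\,\rangle$ is a finitely generated subgroup of the locally uniform group $\bar G$, hence itself uniform, and by the independence just noted $W$ is uniform of rank $n$. Thus $\psi$ corestricts to a surjection $G_{\Delta,\lambda}\twoheadrightarrow W$ of uniform pro-$\ell$ groups both of rank $n$, and exactly as in Lemma~\ref{lem:kumm orRAAG} such a surjection is an isomorphism: by powerfulness $\psi(G_{\Delta,\lambda}^{\ell^k})=W^{\ell^k}$ for all $k$, so $\psi$ induces surjections $G_{\Delta,\lambda}/G_{\Delta,\lambda}^{\ell^k}\twoheadrightarrow W/W^{\ell^k}$ between finite groups of equal order $\ell^{nk}$, forcing an isomorphism at each finite level and hence in the inverse limit. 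Consequently $\psi$ is injective, whence $\iota$ is injective, and the Proposition follows.

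The hard part will be the middle step, namely controlling the image of $\iota$ inside a genuinely non-abelian target. The naive retraction $G\to G_{\Delta,\lambda}$ collapsing all vertices outside $\Delta$ to $1$ fails precisely when a special edge runs from a vertex of $\Delta$ to a special vertex outside $\Delta$, for then the corresponding relation would impose $v=v^{\lambda(1)}$ in $G_{\Delta,\lambda}$, which is false. Routing through the uniform Kummerian quotient $\bar G$ is what circumvents this obstruction; the only nontrivial inputs I really need are that $\bar G$ is uniform, that $\varphi$ is an isomorphism on Frattini quotients, and that a rank-preserving surjection of uniform pro-$\ell$ groups is an isomorphism.
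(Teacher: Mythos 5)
Your proposal is correct and follows essentially the same route as the paper's proof: both pass to the locally uniform Kummerian quotient $\bar G=G/K_{\theta_{\Gamma,\lambda}}(G)$ supplied by Theorem~\ref{thm:cyc pRAAG} and Proposition~\ref{prop:kummer}, observe that the vertices of $\Delta$ remain independent modulo the Frattini subgroup because $K_{\theta_{\Gamma,\lambda}}(G)\subseteq\Phi(G)$, and conclude that the surjection from the uniform group $G_{\Delta,\lambda}$ onto its image in $\bar G$ must be an isomorphism. Your explicit order-counting argument for why a rank-preserving surjection of uniform groups is injective just spells out the step the paper leaves implicit in its chain $G_{\Delta,\lambda}\to H\to\pi(H)$.
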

	
	\begin{proof}
		By Corollary~\ref{cor:complete RAAG cyc}, $(G_{\Delta,\lambda},\theta_{\Delta,\lambda})$ is $\theta_{\Delta,\lambda}$-abelian --- and thus the pro-$\ell$ group $G_{\Delta,\theta}$ is locally uniform ---, as $\Delta$ is complete and specially oriented.
		
		Set $G=G_{\Gamma,\lambda}$.
		Since $\Gamma$ is specially oriented, $(G,\theta_{\Gamma,\lambda})$ is a Kummerian oriented pro-$\ell$ group by Theorem~\ref{thm:cyc pRAAG}, and thus $G/K_{\theta_{\Gamma,\lambda}}(G)$ is a locally uniform pro-$\ell$ group by Proposition~\ref{prop:kummer}.
		Let
		\begin{equation}\label{eq:phi emb clique}
			\phi_\Delta\colon G_{\Delta,\theta}\longrightarrow G\qquad\text{and}\qquad
			\varphi\colon G\longrightarrow G/K(\calG_{\Gamma})
		\end{equation}
		denote respectively the morphism induced by the inclusion $\euV(\Delta)\hookrightarrow\euV$ and the canonical projection --- note that both maps $\pi\vert_\euV$ and $(\pi\circ\phi_{\Delta})\vert_{\euV(\Delta)}$ are injective, as $K_{\theta_{\Gamma,\lambda}}(G)\subseteq \Phi(G)$ --- and let $H$ be the subgroup of $G$ generated by $\euV(\Delta)$.
		Then one has a chain of morphism of pro-$\ell$ groups
		\[
		\xymatrix{ G_{\Delta,\lambda}\ar[r]^-{\phi_\Delta} & H\ar[r]^-{\pi\vert_H} & \pi(H) },
		\]
		where both $G_{\Delta,\lambda}$ and $\pi(H)$ are locally uniform pro-$\ell$ groups, minimally generated by $\euV(\Delta)$.
		Therefore, both morphisms are isomorphism, and $H$ is locally uniform as well.
		In particular, $\theta_{\Delta,\lambda}=\eth_{\pi(H)}\circ\pi\circ\phi_\Delta$, as the canonical orientations are uniquely determined by the structure of the locally uniform pro-$\ell$ groups (cf. Proposition~\ref{prop:locunif}).
	\end{proof}
	
	
	\subsection{Cohomology of oriented pro-$\ell$ RAAGs}\label{ssec:RAAG cohom}
	
	Let $\ddot\Gamma=(\ddot\euV,\ddot\euE)$ be a na\"ive graph, and let $V$ denote the $\F_\ell$ vector space with basis
	$\ddot\euV$.
	The {\sl exterior Stanley-Reisner $\F_\ell$-algebra} associated to $\ddot\Gamma^{\op}$ is the $\F_{\ell}$-algebra
	\[
	\bfLam_\bullet(\ddot\Gamma^{\op})=\frac{\bfLam_\bullet(V)}{\left(\:v\wedge w\:\mid\: \{v,w\}\notin \ddot\euE\:\right)},
	\]
	where $\bfLam_\bullet(V)$ denotes the exterior $\F_\ell$-algebra generated by $V$.
	Clearly, $\bfLam_\bullet(\ddot\Gamma^{\op})$ is a quadratic algebra.
	
	Now let $\Gamma=(\euV,\euE)$ be an oriented graph, and let $\lambda$ be a linear orientation.
	By duality, one has an isomorphism of $\F_\ell$-vector spaces
	\begin{equation}\label{eq:H1 RAAG}
		V=\bfLam_1(\ddot\Gamma^{\op}) \overset{\sim}{\longrightarrow}
		\rmH^1(G_{\Gamma,\lambda},\F_\ell)=\mathrm{Hom}(G_{\Gamma,\lambda},\F_\ell).
	\end{equation}
	(cf. \cite[Ch.~I, \S~4.2]{serre:gal}).
	By \cite[Lemma~5.8]{qsv:quadratic}, the cup-product extends the isomorphism \eqref{eq:H1 RAAG} to a homomorphism of graded $\F_\ell$ algebras 
	\begin{equation}\label{eq:H raag}
		\bfLam_\bullet(\ddot\Gamma^{\op})\longrightarrow\bfH^\bullet(G_{\Gamma,\lambda},\F_\ell) 
	\end{equation}
	which is an isomorphism in degree 2 too.
	Therefore, if $\bfH^\bullet(G_{\Gamma,\lambda},\F_{\ell})$ is quadratic, then $\bfH^\bullet(G_{\Gamma,\lambda},\F_{\ell})\simeq\bfLam_\bullet(\ddot\Gamma^{\op})$ (cf. \cite[Thm.~E]{qsv:quadratic}).
	
	\begin{exam}\rm
		Let $\lambda$ be a linear orientation, and let $\Gamma=(\euV,\euE)$ be an oriented graph without triangles as induced subgraphs.
		Then $\bfH^\bullet(G_{\Gamma,\lambda},\F_{\ell})$ is isomorphic to $\bfLam_\bullet(\ddot\Gamma^{\op})$ (cf. \cite[Thm.~F]{qsv:quadratic}).
	\end{exam}

	\begin{rem}\label{rem:bockstein}\rm
		Put $\ell=2$, and let $\lambda\colon \Z_2\to\Z_2^\times$ be a linear orientation --- i.e., $\lambda(1)=1+4\mu$ for some $\mu\in\Z_2\smallsetminus\{0\}$.
		Given an oriented graph $\Gamma=(\euV,\euE)$, set $G=G_{\Gamma,\lambda}$.
		Then the quotient $G/G^4G'$ is isomorphic to $(\Z/4\Z)^d$, where  $d$ is the number of vertices of $\Gamma$, so that the map
		\[
		\rmH^1(G_{\Gamma,\lambda},\Z/4\Z)\longrightarrow\rmH^1(G_{\Gamma,\lambda},\F_2),
		\]
		induced by the canonical projection $\Z/4\Z\to\F_2$, is surjective.
		This implies that the {\sl Bockstein morphism} $\mathfrak{b}_G\colon \rmH^1(G,\F_2)\to\rmH^2(G,\F_2)$
		is trivial (cf., e.g., \cite[p.~415]{cq:1smoothBK}).
		Furthermore, if $H$ is a subgroup of $G$ such that the restriction map
		$\mathrm{res}_{G,H}^1\colon \rmH^1(G,\F_\ell)\to\rmH^1(H,\F_\ell)$ is surjective, the commutativity of the diagram
		\[
		\xymatrix{ \rmH^1(G,\F_\ell)\ar[rr]^-{\mathfrak{b}_G}\ar[d]_-{\mathrm{res}_{G,H}^1} && 
			\rmH^2(G,\F_\ell)\ar[d]^-{\mathrm{res}_{G,H}^1} \\
			\rmH^1(H,\F_\ell)\ar[rr]^-{\mathfrak{b}_H} && \rmH^2(H,\F_\ell)}
		\]
		implies that also the Bockstein morphism $\mathfrak{b}_H\colon \rmH^1(H,\F_2)\to\rmH^2(H,,\F_2)$
		is trivial.
	\end{rem}
	


	\section{Bloch-Kato Oriented pro-$\ell$ RAAGs}
	
	\subsection{A Tits' alternative}\label{ssec:titsalt}
	
	Recall that by Remark~\ref{rem:locunif cohom} a locally uniform pro-$\ell$ group is Bloch-Kato.
	For Bloch-Kato pro-$\ell$ groups one has the following Tits' alternative type result (cf. \cite[\S~7.1]{qw:cyclotomic}).
	
	\begin{prop}\label{prop:titsalt}
		Let $G$ be a Bloch-Kato pro-$\ell$ group.
		If $\ell=2$ suppose further that the Bockstein morphism $\mathfrak{b}_G\colon\rmH^1(G,\F_2)\to\rmH^2(G,\F_2)$ is trivial.
		Then either $G$ is locally uniform, or $G$ contains a subgroup which is a free non-abelian pro-$\ell$ group.
		In particular, if $G$ is 2-generated, then either $G$ is a free pro-$\ell$ group; or $G$ is locally uniform.
	\end{prop}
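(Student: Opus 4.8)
The plan is to reduce everything to the case of a $2$-generated group, where the $\F_\ell$-cohomology ring is small enough to be analysed directly via the cup product, and then to propagate the conclusion to all finitely generated subgroups by a local-to-global argument. So I would first establish the $2$-generated case, which is the heart of the statement and which already yields the ``in particular'' clause. Suppose $G$ is $2$-generated. Since $\dim_{\F_\ell}\rmH^1(G,\F_\ell)=d(G)\le 2$, the cases $d(G)\le 1$ are immediate: a procyclic Bloch-Kato pro-$\ell$ group is trivial or $\Z_\ell$ (a non-trivial finite cyclic group is excluded, as its cohomology is not generated in degree $1$ for $\ell$ odd and has non-trivial Bockstein for $\ell=2$), and $\Z_\ell$ is locally uniform. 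Assume then $d(G)=2$. As $G$ is Bloch-Kato, $\bfH^\bullet(G,\F_\ell)$ is generated in degree $1$; combining this with graded-commutativity and, when $\ell=2$, with the hypothesis $\mathfrak{b}_G=0$ (which over $\F_2$ reads $x\cup x=0$ for every $x\in\rmH^1$), the cup product exhibits $\bfH^\bullet(G,\F_\ell)$ as a quadratic quotient of the exterior algebra $\Lambda^\bullet(\rmH^1(G,\F_\ell))=\Lambda^\bullet(\F_\ell^2)$. In particular $\Lambda^2\rmH^1(G,\F_\ell)\to\rmH^2(G,\F_\ell)$ is surjective, so $\dim_{\F_\ell}\rmH^2(G,\F_\ell)\le 1$.

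The dichotomy then splits according to $\rmH^2$. If $\rmH^2(G,\F_\ell)=0$, then $\cd(G)\le 1$ and $G$ is a free pro-$\ell$ group of rank $d(G)=2$, hence free non-abelian. If $\dim_{\F_\ell}\rmH^2(G,\F_\ell)=1$, then the surjection above is an isomorphism, i.e. the cup product $\rmH^1\times\rmH^1\to\rmH^2\cong\F_\ell$ is a non-degenerate alternating pairing, which is precisely the condition that $G$ be a Demushkin group of rank $2$. Invoking the classification of Demushkin groups (Demushkin-Serre-Labute), together with the hypothesis $\mathfrak{b}_G=0$ at $\ell=2$ to exclude the exceptional group with non-torsion-free invariant, one identifies $G$ with $\Z_\ell^2$ or with $\Z_\ell\rtimes_{1+\ell^f}\Z_\ell$; each of these is $\eth$-abelian, hence locally uniform by Proposition~\ref{prop:locunif}. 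This settles the $2$-generated case, and thereby the ``in particular''.

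Finally I would bootstrap to the general statement. Suppose $G$ contains no free non-abelian pro-$\ell$ subgroup; I claim $G$ is locally uniform. Every $2$-generated subgroup $H\le G$ is again Bloch-Kato, and by the $2$-generated case it is free or locally uniform; since $H$ cannot be free non-abelian, $H$ is in fact uniform. From this one deduces that every finitely generated $F\le G$ is uniform: $F$ is torsion-free (a non-trivial torsion element would generate a finite cyclic, hence non-uniform, procyclic subgroup), and for all $a,b\in F$ the commutator $[a,b]$ lies in $\langle a,b\rangle'\subseteq\langle a,b\rangle^{\ell}\subseteq F^{\ell}$ (resp. $F^4$ if $\ell=2$) by powerfulness of the uniform group $\langle a,b\rangle$. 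Since $F'$ is topologically generated by such commutators and $F^{\ell}$ is closed, $F'\subseteq F^{\ell}$ (resp. $F^4$), so $F$ is powerful and therefore uniform. Hence $G$ is locally uniform.

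The main obstacle is the $\ell=2$ book-keeping of the Bockstein hypothesis: to apply the $2$-generated case to a subgroup $H\le G$ I need $\mathfrak{b}_H=0$, yet triviality of the Bockstein is not transported along an arbitrary inclusion, since $\mathrm{res}^1_{G,H}\colon\rmH^1(G,\F_2)\to\rmH^1(H,\F_2)$ need not be surjective (cf. Remark~\ref{rem:bockstein}, where inheritance is established exactly under surjectivity of the restriction). The delicate point is thus to show that a Bloch-Kato pro-$2$ group with $\mathfrak{b}_G=0$ cannot contain the exceptional rank-$2$ Demushkin group, equivalently that $\mathfrak{b}_H=0$ for every $H\le G$. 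I would handle this by exploiting that $H$ is itself Bloch-Kato, so that a putative class $z\in\rmH^1(H,\F_2)$ with $z\cup z\ne 0$ forces $H$ to be Demushkin of the exceptional type, and then deriving a contradiction with $\mathfrak{b}_G=0$ through the quadratic structure of $\bfH^\bullet(G,\F_2)$; for the precise argument I would follow the treatment in \cite[\S~7.1]{qw:cyclotomic}.
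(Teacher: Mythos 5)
First, a point of comparison: the paper does not actually prove this proposition --- it is quoted from \cite[\S~7.1]{qw:cyclotomic} --- so there is no internal proof to measure you against. Your reconstruction follows what is essentially the standard route (and the one taken in that reference): settle the $2$-generated case by using quadraticity and the vanishing of squares to bound $\dim\rmH^2(G,\F_\ell)\le 1$, split into the free case ($\rmH^2=0$, hence $\cd(G)\le 1$) and the Demushkin case ($\dim\rmH^2=1$ with non-degenerate cup product, then invoke the classification), and globalize via the observation that a pro-$\ell$ group all of whose $2$-generated subgroups are uniform is locally uniform --- the same local-to-global step the paper records in Remark~\ref{rem:2subgroups locunif} and uses in the proof of Corollary~\ref{cor:nospecial nosubgroups}. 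For $\ell$ odd your argument is complete and correct.

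For $\ell=2$, however, there is a genuine gap, and you have located it yourself without closing it. Applying the $2$-generated dichotomy to a subgroup $H\le G$ requires $\mathfrak{b}_H=0$, i.e.\ $z\cup z=0$ for every $z\in\rmH^1(H,\F_2)$; without this, $H$ could a priori be $\Z/2$ (whose cohomology algebra $\F_2[x]$ is quadratic, so $\Z/2$ is Bloch-Kato) or an exceptional rank-$2$ Demushkin group such as $\Z_2\rtimes_{-1}\Z_2$ (Bloch-Kato, torsion-free, but neither free nor uniform), and the bootstrap collapses. Triviality of the Bockstein is not inherited along an arbitrary inclusion, since $\mathrm{res}^1_{G,H}$ need not be surjective --- your own appeal to Remark~\ref{rem:bockstein} only gives heredity under surjectivity of the restriction, and an order-$2$ element hidden inside $\Phi(G)$ is exactly the situation that escapes it. The closing sketch (``a class $z$ with $z\cup z\ne 0$ forces $H$ to be exceptional Demushkin, then contradict $\mathfrak{b}_G=0$ through the quadratic structure of $\bfH^\bullet(G,\F_2)$'') is a statement of what must be proved, not a proof; ruling this out requires a genuine structural input on torsion and on squares in subgroup cohomology of Bloch-Kato pro-$2$ groups. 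Since you defer precisely this point to \cite[\S~7.1]{qw:cyclotomic}, for $\ell=2$ your proposal ultimately reduces to the same citation the paper already makes rather than replacing it.
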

	
	By Remark~\ref{rem:titsalt RAAGs}, the same phenomenon occurs for oriented pro-$\ell$ RAAGs. Moreover, observe that by Remark~\ref{rem:bockstein}, if $\ell=2$ the Bockstein morphism is always trivial for every subgroup of an oriented pro-$\ell$ RAAG such that the restriction map of degree 1 is surjective.
	
	
	\subsection{Non-specially oriented graphs}

	\begin{thm}\label{thm:nospecial noBK}
		Let $\Gamma=(\euV,\euE)$ be an oriented graph, and let $\lambda$ be a linear orientation.
		If $G_{\Gamma,\lambda}$ is Bloch-Kato, then $\Gamma$ is specially oriented.
	\end{thm}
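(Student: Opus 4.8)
The plan is to prove the contrapositive: if $\Gamma$ is not specially oriented, then $G=G_{\Gamma,\lambda}$ is not Bloch--Kato. So suppose $G$ is Bloch--Kato; I will deduce that $\Gamma$ is specially oriented. If it were not, then---just as in the opening of the proof of Theorem~\ref{thm:cyc pRAAG}---the graph $\Gamma$ would contain a (not necessarily induced) subgraph on three vertices $\{x,y,z\}$ of one of the two shapes in \eqref{eq:special bad}, with $\eue_1=(x,y)\in\euE$ and $\eue_2=(z,x)\in\euE_\mathrm{s}$. The decisive feature is that $x$ is simultaneously the origin of $\eue_1$ (hence an ordinary vertex, so $\theta_{\Gamma,\lambda}(x)=1$) and the terminus of the special edge $\eue_2$.

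First I would fix the structure of the two edge subgroups $H_{\eue_1}=\langle x,y\rangle$ and $H_{\eue_2}=\langle x,z\rangle$ by means of the Tits alternative. When $\ell=2$, Remark~\ref{rem:bockstein} ensures that the Bockstein morphism is trivial on every subgroup onto which the degree-one restriction map is surjective, so that Proposition~\ref{prop:titsalt} is applicable to all the subgroups occurring here. Being $2$-generated subgroups of a Bloch--Kato group, each $H_{\eue_i}$ is either free pro-$\ell$ or locally uniform; since it carries a genuine defining relation---an honest commutator in the ordinary case, a commutator times a nontrivial $\lambda(1)$-power in the special case---it is not free, and hence is locally uniform. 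Reading off canonical orientations through Proposition~\ref{prop:locunif}, I get $\eth_{H_{\eue_2}}(x)=\lambda(1)$ (as $x=t(\eue_2)$ conjugates $z$ by $\lambda(1)$), whereas $\eth_{H_{\eue_1}}(x)=1$ (as in $H_{\eue_1}$ the vertex $x$ is either central or is conjugated by $y$). Thus the single element $x$ is assigned two incompatible canonical-orientation values $1\neq\lambda(1)$.

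To exploit this, I would pass to $N=\langle x,y,z\rangle$, again a Bloch--Kato group by heredity. Proposition~\ref{prop:titsalt} gives that $N$ is locally uniform or contains a nonabelian free pro-$\ell$ subgroup; the first alternative is impossible, since canonical orientations are inherited ($\eth_{H}=\eth_N\vert_H$ by Proposition~\ref{prop:locunif}) and would force $\eth_N(x)$ to equal both $1$ and $\lambda(1)$. The cleanest way to conclude is then to show that a Bloch--Kato oriented pro-$\ell$ RAAG is necessarily Kummerian with respect to $\theta_{\Gamma,\lambda}$; indeed, granting this, Lemma~\ref{lem:kumm orRAAG} applied to the special edge $\eue_2$ would yield $\theta_{\Gamma,\lambda}(x)=\eth_{H_{\eue_2}}(x)=\lambda(1)$, contradicting $\theta_{\Gamma,\lambda}(x)=1$; equivalently, Kummerianity would put us in a position to invoke Theorem~\ref{thm:cyc pRAAG} directly and force $\Gamma$ to be specially oriented.

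I expect the real difficulty to lie precisely in this last bridge---passing from the local, edge-wise locally uniform data to a global Kummerian structure under the Bloch--Kato hypothesis alone, equivalently ruling out the nonabelian-free alternative for $N$. Concretely, the task reduces to checking that the oriented pro-$\ell$ RAAG carried by the three-vertex obstruction $\{x,y,z\}$ cannot have $\F_\ell$-cohomology isomorphic to an exterior Stanley--Reisner algebra: the two defining relations impose clashing power (Bockstein) conditions at the shared vertex $x$, which plays the role of the acting generator for $\eue_2$ but of an acted-upon generator for $\eue_1$. It is this clash---and not the elementary two-generator analysis---that obstructs the Bloch--Kato property, and this is where I would concentrate the technical work.
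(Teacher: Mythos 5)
Your setup coincides with the paper's: you locate the three-vertex obstruction $\{x,y,z\}$ from \eqref{eq:special bad}, use Proposition~\ref{prop:titsalt} (with Remark~\ref{rem:bockstein} for $\ell=2$) to see that the edge subgroups $H_{\eue_1}=\langle x,y\rangle$ and $H_{\eue_2}=\langle x,z\rangle$ are locally uniform, and extract the clash $\eth_{H_{\eue_1}}(x)=1\neq\lambda(1)=\eth_{H_{\eue_2}}(x)$. But the argument stops exactly where the real work begins, and both of the routes you sketch for finishing are flawed. First, ``ruling out the nonabelian-free alternative for $N=\langle x,y,z\rangle$'' is not a viable goal: a Bloch--Kato pro-$\ell$ group may perfectly well contain nonabelian free pro-$\ell$ subgroups (free pro-$\ell$ groups are themselves Bloch--Kato), so knowing that $N$ is not locally uniform and hence contains such a subgroup yields no contradiction whatsoever. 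Second, ``show that a Bloch--Kato oriented pro-$\ell$ RAAG is Kummerian with respect to $\theta_{\Gamma,\lambda}$'' is, by Theorem~\ref{thm:cyc pRAAG}, equivalent to the very statement you are proving (Kummerian $\Leftrightarrow$ specially oriented), so invoking it is circular. Acknowledging that the bridge is missing is not the same as building it.

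What the paper actually does is replace $N$ by subgroups to which the dichotomy of Proposition~\ref{prop:titsalt} genuinely applies, via a case analysis on $H_0=\langle y,z\rangle$. If $H_0$ is neither free nor locally uniform, it is already not Bloch--Kato. If $H_0$ is locally uniform, then all commutators of the generators $x,y,z$ lie in $H^\ell$, so $H=\langle x,y,z\rangle$ is \emph{powerful}; powerful pro-$\ell$ groups contain no nonabelian free subgroups (\cite[Thm.~3.13]{ddsms}), and $H$ cannot be locally uniform because of the orientation clash at $x$, so Proposition~\ref{prop:titsalt} forces $H$ not to be Bloch--Kato. If $H_0$ is free, one sets $t=yz$ and shows that the $2$-generated subgroup $H_t=\langle x,t\rangle$ contains $z^{\lambda(1)-1}$ and is neither free (since ${}^x\bigl(z^{\lambda(1)-1}\bigr)=z^{\lambda(1)(\lambda(1)-1)}$ is a nontrivial relation) nor locally uniform (that would force a relation $t^{\mu_1}=z^{\mu_2(\lambda(1)-1)}$ inside the free group $H_0=\langle t,z\rangle$), so the $2$-generated form of the Tits alternative applies. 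These two devices --- the powerfulness trick and the auxiliary element $t=yz$ --- are precisely the content missing from your proposal.
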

	
	\begin{proof}
		Set $G=G_{\Gamma,\lambda}$, and suppose that $\Gamma$ is not specially oriented.
		Then $\Gamma$ contains a (non-necessarily induced) subgraph with geometric realization as in \eqref{eq:special bad} (cf. Definition~\ref{defi:specialgraphs}).
		Set $\eue_1=(x,y),\eue_2=(z,x)\in\euE$.
		Let $H$ be the subgroup of $G$ generated by $x,y,z$, and let $H_0$ be the subgroup of $H$ generated by $y,z$.
		If $H_0$ is not free nor locally uniform, then by Proposition~\ref{prop:titsalt} $H_0$ (and thus also $G$) is not Bloch-Kato.
		So, let us assume that $H_0$ is locally uniform or free.
		
		Observe that the subgroups $H_{\eue_1}$ and $H_{\eue_2}$ of $G$, generated respectively by $x,y$ and $x,z$, are not free.
		Therefore, if $G$ is Bloch-Kato, then they must be locally uniform by Proposition~\ref{prop:titsalt}.
		
		Suppose first that $H_0$ is locally uniform: then $[y,z]\in H_0^{\ell}\subseteq H^\ell$, and thus $H$ is powerful, as also $[x,y],[x,z]\in H^\ell$.
		If $H$ is locally uniform, then one has 
		$$
		\eth_H(x)=\eth_{H_{\eue_1}}(x)=1\qquad\text{and}\qquad \eth_H(x)=\eth_{H_{\eue_2}}(x)=\lambda(1),$$
		a contradiction.
		Therefore, $H$ is powerful but not locally uniform, and since powerful pro-$\ell$ groups contain no non-abelian free subgroups (cf. \cite[Thm.~3.13]{ddsms}), $H$ is not Bloch-Kato by Proposition~\ref{prop:titsalt}.
		
		Suppose now that $H_0$ is a free pro-$\ell$ group, and set $t=yz$.
		Then
		\begin{equation}\label{eq:Ht 2gen nospec}
			{}^xt={}^xy\cdot {}^xz=\begin{cases}
				y\cdot {}^xz=yz^{\lambda(1)}=tz^{\lambda(1)-1},&\text{if }(x,y)\in\euE_\mathrm{o},\\
				x^{1-\lambda(1)}yz^{\lambda(1)}=x^{1-\lambda(1)}tz^{\lambda(1)-1},&\text{if }(x,y)\in\euE_\mathrm{s}.
			\end{cases}
		\end{equation}
		Let $H_t$ be the subgroup of $H$ generated by $x,t$.
		Observe that \eqref{eq:Ht 2gen nospec} implies that $z^{\lambda(1)-1}$ lies in $H_t$, and therefore $H_t$ is not a free pro-$\ell$ group, as 
		$${}^x\left(z^{\lambda(1)-1}\right)=z^{\lambda(1)(\lambda(1)-1)}.$$
		Moreover, Proposition~\ref{prop:titsalt} may apply also to $H_t$ (cf. \S~\ref{ssec:titsalt}).
		If $H_t$ is locally uniform, then also the subgroup $\langle\: t,z^{\lambda(1)-1}\:\rangle\subseteq H_t$ is locally uniform, but $\langle\: t,z^{\lambda(1)-1}\:\rangle$ is also a subgroup of $H_0$ which is free: therefore, 
		$$\langle\: t,z^{\lambda(1)-1}\:\rangle\simeq\Z_{\ell}\;\Longrightarrow\;t^{\mu_1}=z^{\mu_2(\lambda(1)-1)}
		\qquad\text{ for some }\mu_1,\mu_2\in\Z_{\ell}\smallsetminus\{0\}.$$
		This implies that $[z,t^{\mu_1}]=1$, while $H_0=\langle\: t,z\:\rangle$ is free, a contradiction.
		Therefore, $H_t$ is not free nor locally uniform, and thus Proposition~\ref{prop:titsalt} implies that $H_0$ (and hence also $G$) is not Bloch-Kato.
	\end{proof}
	
	From the proof of Theorem~\ref{thm:nospecial noBK} we deduce the following.
	
	\begin{cor}\label{cor:nospecial nosubgroups}
		Let $\Gamma=(\euV,\euE)$ be an oriented graph, and let $\lambda$ be a linear orientation.
		If every finitely generated subgroup of $G_{\Gamma,\lambda}$ is isomorphic to an oriented pro-$\ell$ RAAG, then $\Gamma$ is specially oriented. 
	\end{cor}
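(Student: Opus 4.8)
The plan is to mimic the proof of Theorem~\ref{thm:nospecial noBK} almost verbatim, simply replacing every appeal to the Tits' alternative for Bloch--Kato pro-$\ell$ groups (Proposition~\ref{prop:titsalt}) by the corresponding statement for oriented pro-$\ell$ RAAGs recorded in Remark~\ref{rem:titsalt RAAGs}. The key observation making this substitution legitimate is that a $2$-generated oriented pro-$\ell$ RAAG is necessarily associated to a graph with at most two vertices --- since the number of vertices equals the minimal number of generators, cf.~\eqref{eq:H1 RAAG} --- and hence, by Remark~\ref{rem:titsalt RAAGs}, is either a free pro-$\ell$ group or locally uniform; in particular it is torsion-free. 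Thus, under the hypothesis of the corollary, \emph{every} $2$-generated subgroup of $G=G_{\Gamma,\lambda}$ is free or locally uniform. Note that this dichotomy is unconditional when $\ell=2$, so unlike in Proposition~\ref{prop:titsalt} no separate discussion of the Bockstein morphism is required (cf.~Remark~\ref{rem:bockstein}).

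So suppose, for contradiction, that $\Gamma$ is not specially oriented. As in the proof of Theorem~\ref{thm:nospecial noBK}, $\Gamma$ then contains a (not necessarily induced) subgraph with geometric realization as in~\eqref{eq:special bad}, with vertices $x,y,z$ and edges $\eue_1=(x,y)$, $\eue_2=(z,x)\in\euE$; I would put $H=\langle x,y,z\rangle$, $H_0=\langle y,z\rangle$, $H_{\eue_1}=\langle x,y\rangle$ and $H_{\eue_2}=\langle x,z\rangle$. The easy branches are then immediate. If $H_0$ is neither free nor locally uniform, then $H_0$ is a $2$-generated subgroup which is not an oriented pro-$\ell$ RAAG, a contradiction; so one may assume $H_0$ is free or locally uniform. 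Likewise $H_{\eue_1}$ and $H_{\eue_2}$ are $2$-generated and not free, hence must be locally uniform. In the branch where $H_0$ is free, setting $t=yz$ and computing through~\eqref{eq:Ht 2gen nospec} exhibits $H_t=\langle x,t\rangle$ as a $2$-generated subgroup that is not free (because $z^{\lambda(1)-1}\in H_t$ is conjugated by $x$ to its $\lambda(1)$-th power) and not locally uniform (otherwise $\langle t,z^{\lambda(1)-1}\rangle$ would be an abelian subgroup of the free group $H_0$, forcing a commuting relation among a basis of $H_0$). Thus $H_t$ is not an oriented pro-$\ell$ RAAG, a contradiction.

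The remaining branch --- $H_0$ locally uniform --- is where the main obstacle lies, since it is the one place that does not reduce \emph{immediately} to a $2$-generated subgroup. As in the original proof, one checks that $[x,y],[x,z]\in H^{\ell}$ and $[y,z]\in H_0^{\ell}\subseteq H^{\ell}$, so $H$ is powerful; and $H$ cannot be locally uniform, since otherwise its canonical orientation would have to satisfy both $\eth_H(x)=\eth_{H_{\eue_1}}(x)=1$ and $\eth_H(x)=\eth_{H_{\eue_2}}(x)=\lambda(1)\neq1$. To turn ``$H$ powerful but not locally uniform'' into a $2$-generated witness I would invoke the auxiliary observation that a pro-$\ell$ group all of whose $2$-generated subgroups are locally uniform is itself locally uniform: indeed, if every $2$-generated subgroup is uniform, then any finitely generated subgroup $K=\langle a_1,\dots,a_n\rangle$ is torsion-free (a torsion element would generate a cyclic subgroup that is not torsion-free, hence not uniform) and powerful (each $[a_i,a_j]\in\langle a_i,a_j\rangle^{\ell}\subseteq K^{\ell}$, with $K^4$ in place of $K^{\ell}$ when $\ell=2$), whence uniform. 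Applying the contrapositive to the non-locally-uniform group $H$ produces a $2$-generated subgroup $N\subseteq H$ that is not locally uniform; as $H$ is powerful it contains no non-abelian free pro-$\ell$ subgroup (\cite[Thm.~3.13]{ddsms}), so $N$ is not free either. Hence $N$ is a $2$-generated subgroup that is neither free nor locally uniform, contradicting Remark~\ref{rem:titsalt RAAGs}. This exhausts all cases, so $\Gamma$ must be specially oriented. The only genuinely new ingredient beyond the proof of Theorem~\ref{thm:nospecial noBK} is this ``local-to-global'' characterization of local uniformity through $2$-generated subgroups, which is precisely what allows the $3$-generated powerful group $H$ to be detected by a $2$-generated RAAG obstruction.
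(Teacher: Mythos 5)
Your proposal is correct and takes essentially the same route as the paper's own proof: the same three-way case analysis on $H_0$ (neither free nor locally uniform / free / locally uniform), the same reduction of the first two cases to the $2$-generated dichotomy of Remark~\ref{rem:titsalt RAAGs}, and in the last case the same ``local-to-global'' criterion that a pro-$\ell$ group all of whose $2$-generated subgroups are uniform is locally uniform (which the paper extracts afterwards as Remark~\ref{rem:2subgroups locunif}). The only difference is cosmetic: you reproduce inline the computations that the paper imports by reference from the proof of Theorem~\ref{thm:nospecial noBK}.
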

	
	\begin{proof}
		Set $G=G_{\Gamma,\lambda}$. 
		Suppose that $\Gamma$ is not specially oriented, and let $x,y,z\in\euV$ and $H,H_0,H_t\subseteq G$ be as in the proof of Theorem~\ref{thm:nospecial noBK}.
		
		If the 2-generated pro-$\ell$ group $H_0$ is not a free pro-$\ell$ group nor locally uniform, then $H_0$ does not occur as an oriented pro-$\ell$ RAAG by Remark~\ref{rem:titsalt RAAGs}.
		So, let us suppose that $H_0$ is free or locally uniform.
		
		If $H_0$ is a free pro-$\ell$ group, then the 2-generated pro-$\ell$ group $H_t$ is not free nor locally uniform, and hence $H_t$ does not occur as an oriented pro-$\ell$ RAAG, again by Remark~\ref{rem:titsalt RAAGs}.
		
		Finally, suppose that $H_0$ is locally uniform.
		Then $H$ is powerful --- but not locally uniform, by the proof of Theorem~\ref{thm:nospecial noBK} ---, and by \cite[Thm.~3.13]{ddsms} $H$ has no free 2-generated subgroups.
		Suppose that every 2-generated subgroup of $H$ is locally uniform. 
		Thus, $H$ is torsion-free.
		Moreover, for every couple of elements $u,w\in H$, the subgroup $\langle\:u,w\:\rangle$ is uniform, and 
		$$[u,w]\in\begin{cases} \langle\:u,w\:\rangle^{\ell} & \text{if }\ell\neq2, \\ 
			\langle\:u,w\:\rangle^4&\text{if }{\ell}=2.  \end{cases}$$
		Therefore, for every subgroup $V$ of $H$ one has $V'\subseteq V^{\ell}$ ($V'\subseteq V^4$ if ${\ell}=2$), so that $V$ is powerful, and also uniform as $H$ is torsion-free.
		Hence, $H$ is locally uniform, a contradiction.
		Thus, there exists a 2-generated subgroup of $H$ which is neither free nor locally uniform, and consequently it does not occur as an oriented pro-$\ell$ RAAG.
	\end{proof}
	
	\begin{rem}\label{rem:2subgroups locunif}\rm
		From the proof of Corollary~\ref{cor:nospecial nosubgroups}, one deduces that a pro-$\ell$ group $G$ is locally uniform if, and only if, every 2-generated subgroup of $G$ is uniform.
	\end{rem}

	
	\subsection{The oriented graph $\Lambda_{\mathrm{s}}$}
	
	Recall that the oriented graph $\Lambda_{\mathrm{s}}$ is specially oriented but not of elementary type (cf. Example~\ref{exam:ETgraphs}--(c)).
	
	\begin{prop}\label{thm:lambdas}
		Let $\lambda$ be a linear orientation.
		Then the pro-$\ell$ group $G_{\Lambda_{\mathrm{s}},\lambda}$ is not Bloch-Kato.
	\end{prop}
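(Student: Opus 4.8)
The plan is to assume $G:=G_{\Lambda_{\mathrm s},\lambda}$ is Bloch--Kato and to contradict the Tits'-alternative-type result Proposition~\ref{prop:titsalt}. First I record the structure of $G$. Writing $v_1,v_3$ for the two ordinary vertices and $v_2$ for the unique special vertex of $\Lambda_{\mathrm s}$, the presentation is $G=\langle\,v_1,v_2,v_3\mid {}^{v_2}v_1=v_1^{\lambda(1)},\ {}^{v_2}v_3=v_3^{\lambda(1)}\,\rangle$, so that $F:=\langle v_1,v_3\rangle$ is a free pro-$\ell$ group of rank $2$, normal in $G$, with $G=F\rtimes\langle v_2\rangle$ and $v_2$ acting by the automorphism $\alpha$ sending each free generator $v_i$ to $v_i^{\lambda(1)}$. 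Since $\Lambda_{\mathrm s}$ is specially oriented, Theorem~\ref{thm:cyc pRAAG} gives that $(G,\theta_{\Gamma,\lambda})$ is Kummerian; let $\varphi\colon G\to\bar G$ be the epimorphism of Proposition~\ref{prop:kummer}(iii) onto the locally uniform group $\bar G\cong\Z_\ell^2\rtimes\langle\bar v_2\rangle$ (with $\bar v_2$ acting as the scalar $\lambda(1)$), so that $\kernel\varphi=K_{\theta_{\Gamma,\lambda}}(G)\supseteq F'$.

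The aim is to exhibit a $2$-generated subgroup $H\le G$ which is neither free nor locally uniform; by the final assertion of Proposition~\ref{prop:titsalt} --- whose Bockstein hypothesis for $\ell=2$ is supplied by Remark~\ref{rem:bockstein} --- such an $H$ is not Bloch--Kato, and hence neither is $G$. I take $t:=v_1v_3$ and $H:=\langle v_2,t\rangle$, and set $c:=[v_2,t]\,t^{-(\lambda(1)-1)}\in H$. Then $c\in F'\subseteq\kernel\varphi$, and a computation in $F/\gamma_3(F)$, using $v_1^{\lambda(1)}v_3^{\lambda(1)}=t^{\lambda(1)}[v_1,v_3]^{\binom{\lambda(1)}{2}}$, collapses to
\[
c\equiv[v_1,v_3]^{\binom{\lambda(1)}{2}}\pmod{\gamma_3(F)}.
\]
Since $\lambda(1)\in 1+\ell\Z_\ell\smallsetminus\{1\}$ one has $\binom{\lambda(1)}{2}=\tfrac{\lambda(1)(\lambda(1)-1)}{2}\neq0$ in $\Z_\ell$, whence $c\notin\gamma_3(F)$; in particular $1\neq c\in H\cap\kernel\varphi$.

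To see that $H$ is \emph{not} locally uniform, I would argue as follows. The image $\varphi(H)=\langle\bar v_2,(1,1)\rangle\cong\Z_\ell\rtimes\Z_\ell$ is uniform of dimension $2$, and $H$ is generated by two elements. If $H$ were locally uniform it would be uniform with $\dim_{\F_\ell}H/\Phi(H)=2$, so the surjection $\varphi|_H$ between uniform pro-$\ell$ groups of equal dimension would be an isomorphism --- its kernel being a $0$-dimensional, hence finite, subgroup of a torsion-free group, thus trivial. This contradicts $1\neq c\in\kernel(\varphi|_H)$. This part of the argument is clean and robust.

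The main obstacle is to establish that $H$ (or a suitable variant) is \emph{not free}. The plan is to write $H=N\rtimes\langle v_2\rangle$ with $N:=H\cap F$ free, and to analyse the Lyndon--Hochschild--Serre spectral sequence of $1\to N\to H\to\Z_\ell\to1$, which collapses to $\rmH^2(H,\F_\ell)\cong\rmH^1\!\big(\Z_\ell,\rmH^1(N,\F_\ell)\big)$, the $v_2$-coinvariants of $\rmH^1(N,\F_\ell)$. The delicate point --- and the genuine heart of the proof --- is to show these coinvariants are non-zero, i.e.\ that the conjugation action of $v_2$ on $\rmH^1(N,\F_\ell)$ is, in the appropriate sense, unipotent rather than of ``shift'' type; this is where one must use crucially that $\alpha$ acts as the scalar $\lambda(1)^k\equiv1\pmod{\ell}$ on each graded piece $\gamma_k(F)/\gamma_{k+1}(F)$. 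An alternative route to the same conclusion is Bass--Serre theory applied to the amalgam decomposition $G\cong(\Z_\ell\rtimes\Z_\ell)\amalg_{\Z_\ell}(\Z_\ell\rtimes\Z_\ell)$ coming from the patching $\ddot\Lambda_{\mathrm s}=(v_1{-}v_2){-}(v_2{-}v_3)$, aiming to show that the chosen subgroup retains a non-trivial vertex group and therefore has cohomological dimension $2$. Either way one gets $\rmH^2(H,\F_\ell)\neq0$, so $H$ is not free, and together with the previous paragraph this contradicts Proposition~\ref{prop:titsalt}. I expect that verifying this non-freeness --- and, if $\langle v_2,v_1v_3\rangle$ should turn out to be free, selecting the correct $2$-generated witness inside the amalgam --- will be the most technical step.
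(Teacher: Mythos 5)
Your reduction to Proposition~\ref{prop:titsalt} is legitimate in principle, and the half of the argument you actually carry out is correct: $c=[v_2,t]\,t^{-(\lambda(1)-1)}$ is a non-trivial element of $\kernel(\varphi)\cap H$ (its class modulo $\gamma_3(F)$ is $[v_1,v_3]^{\lambda(1)(\lambda(1)-1)/2}\neq 1$, also for $\ell=2$ since $4\mid\lambda(1)-1$), and your dimension count showing that $H=\langle v_2,v_1v_3\rangle$ cannot be locally uniform is sound. But the proof is incomplete at exactly the point you flag yourself: you never show that $H$ --- or any other $2$-generated subgroup --- fails to be a free pro-$\ell$ group, and this is not a routine verification. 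Writing $H=N\rtimes\langle v_2\rangle$ with $N=H\cap F$ and $F=\langle v_1,v_3\rangle$, the group $N$ is topologically generated by the conjugates $v_1^{\lambda(1)^{k}}v_3^{\lambda(1)^{k}}$, and a semidirect product of an infinitely generated free pro-$\ell$ group by $\Z_\ell$ can perfectly well be free (this is how a free pro-$\ell$ group of rank $2$ decomposes over the normal closure of one generator). Deciding whether the $v_2$-invariants of $N/\Phi(N)$ vanish requires controlling $\Phi(N)$ inside $F$, and neither the unipotence of the action on the quotients $\gamma_k(F)/\gamma_{k+1}(F)$ nor the Bass--Serre picture settles this: in the pro-$\ell$ tree for $G\simeq G_1\amalg_{Z}^{\hat\ell}G_2$ one checks that every vertex and edge stabilizer of $H$ is procyclic (the image of $N$ in $F/F'$ is the diagonal, so $N$ meets every conjugate of $\langle v_1\rangle$ and of $\langle v_3\rangle$ trivially), and a graph of procyclic pro-$\ell$ groups can have fundamental group either free or of cohomological dimension $2$. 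So the heart of the argument is missing.

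It is worth comparing with the paper's proof, which does not produce a $2$-generated witness at all and abandons the Tits-alternative route that it uses for non-specially oriented graphs in Theorem~\ref{thm:nospecial noBK} (recall that $\Lambda_{\mathrm{s}}$ is specially oriented, so $(G,\theta_{\Lambda_{\mathrm{s}},\lambda})$ is Kummerian by Theorem~\ref{thm:cyc pRAAG} and the obvious small subgroups are all free or uniform). Instead, the authors pass to the normal subgroup $U=\kernel(\phi_U)$ of index $q=\ell^{f}$, describe it via Theorem~\ref{thm:zalmel} as a fundamental pro-$\ell$ group of a graph of groups, and extract a $3$-generated subgroup $H=\langle u_1,y,t\rangle$ with the minimal presentation \eqref{eq:pres H BK}, in which one defining relation lies outside $F_{(3)}$ while the other lies inside $F_{(3)}$. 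By \cite[Thm.~7.3]{MPQT} this forces the cup-product $\rmH^1(H,\F_\ell)\times\rmH^1(H,\F_\ell)\to\rmH^2(H,\F_\ell)$ to be non-surjective, so $\bfH^\bullet(H,\F_\ell)$ is not quadratic and $H$ is not Bloch--Kato. To salvage your approach you would have to compute the coinvariants $\rmH^1(\Z_\ell,\rmH^1(N,\F_\ell))$ honestly and show they are non-zero; otherwise you should switch to a quadraticity obstruction of the above kind.
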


	\begin{proof}
		Set $G=G_{\Lambda_{\mathrm{s}},\lambda}$.
		Up to a change of the generator $x$, we may assume that 
		$$\theta_{\Lambda_{\mathrm{s}},\lambda}(x)=\lambda(1)=1+p^f\qquad \text{for some }f\geq1$$
		(in particular, if ${\ell}=2$ then $f\geq2$). Put $q=p^f$.
		Then $G$ has a minimal presentation
		\[
		G=\left\langle\:x,z_1,z_2\:\mid\:{}^xz_1=z_1^{1+q},\:{}^xz_2=z_2^{1+q}\:\right\rangle.
		\]
		Hence, we can consider $G$ as the fundamental group of the graph of pro-$\ell$ groups
		\[
		\xymatrix@R=1.5pt{ \bullet\ar@{-}[rr]^-{Z} && \bullet\\ G_1 & & G_2}
		\]
		where $Z$ is the pro-$\ell$-cyclic subgroup of $G$ generated by $x$, and for $i=1,2$ the group $G_i$ is the subgroup of $G$ generated by $x,z_i$ --- i.e., $G_i\simeq\langle z_i\rangle\rtimes Z$ ---, and the monomorphisms from $Z$ to $G_1,G_2$ are the canonical embeddings.
		
		Let $\phi_U\colon G\to\Z/q\Z$ be the homomorphism defined by $\phi_U(z_i)=1$ for $i=1,2$, and $\phi_U(x)=0$.
		Set $u_i=z_i^q$, for $i=1,2$, and $t=z_1z_2^{-1}$.
		Then $U$ is the normal subgroup of $G$ generated by $x,u_1,u_2,t$.
		Note that $x,u_1,t$ are enough to generate $U$ (as normal subgroup), as
		\begin{equation}\label{eq: u2 gen by}
			{}^xt=z_1^{1+q}z_2^{-1-q}=u_1 t  u_2.
		\end{equation}
		Let $U_1,U_2$ be the subgroups of $U$ generated by $u_1,x$ and $u_2,x$ respectively.
		By Theorem~\ref{thm:zalmel}, $U$ is the fundamental pro-$\ell$ group of the graph of pro-$\ell$ groups $\mathcal{U}$ based on the graph $\Delta=U\backslash\mathrm{T}$, with $q$ edges, where $\mathrm{T}=(\euV(\mathrm{T}),\euE(\mathrm{T}))$ is the second-countable pro-$\ell$ tree associated to the decomposition as pro-$\ell$ amalgam $G\simeq G_1\amalg_Z^{\hat {\ell}}G_2$, namely 
		$$\euV(\mathrm{T})=\left\{\:gG_1,\:gG_2\:\mid\: g\in G\:\right\}\qquad 
		\text{and}\qquad \euE(\mathrm{T})=\{\:gZ\:\mid\: g\in G\:\}.$$ 
		In particular $\mathcal{U}(v_i)=U_i$ for $i=1,2$.
		
		Let $H$ be the subgroup of $U$ generated as a pro-$\ell$ group by $\{u_1,x,t\}$, and set $y=u_1^{-1}x$.
		Then $u_2\in H$, and $U_1=\langle\: u_1\:\rangle\rtimes\langle \: y\:\rangle$.
		Moreover, $H$ is the fundamental pro-$\ell$ group of the graph of pro-$\ell$ groups
		\[\mathcal{H}=\qquad 
		\xymatrix@R=1.5pt{  \bullet\ar@{-}[rr]_-{Z} \ar@{-}@/^1pc/[rr]^-{\langle\: y\:\rangle} && \bullet\\ U_1&& \langle\: u_2^{-1}x\:\rangle\simeq\Z_{\ell}}
		\]
		which is a restriction of $\mathcal{U}$.
		In particular, the monomorphisms of pro-$\ell$ groups associated to the bottom edge of $\mathcal{U}$ are the inclusion $Z\hookrightarrow U_1$ and the isomorphism $Z\to\langle\: u_2^{-1}x\:\rangle$, $x\mapsto u_2^{-1}x$, while the monomorphisms of pro-$\ell$ groups associated to the top edge of $\mathcal{U}$ are the monomorphism $\langle \:y\:\rangle\to U$, $y\mapsto x$, and the isomorphism $\langle\: y\:\rangle\to\langle\: u_2^{-1}x\:\rangle$, $y\mapsto u_2^{-1}x$.
		Hence 
		\[
		H=\left\langle\:u_1,y,t,u_2^{-1}x\:\mid\:{}^yu_1=u_1^{1+q},\:t^{-1}yt=u_2^{-1}x\:\right\rangle,
		\]
		and this yields the minimal presentation
		\begin{equation}\label{eq:pres H BK}
			H=\left\langle\:u_1,y,t\:\mid\:[y,u_1]u_1^{-q}=
			\left[u_1y,[t^{-1},y]\right]u_2^q\left({}^{(u_2^{-1})}u_1^{-1}\right)^q=1\:\right\rangle
		\end{equation}
		--- observe that $u_2$ may be generated by $u_1,y,t$, as $u_2=[t^{-1},y]u_1^{-1}$ by \eqref{eq: u2 gen by}.
		
		Now let $F$ be the free pro-$\ell$ group generated by $\{u_1,y,t\}$, and let $R$ be the normal subgroup of $F$ such that $H=F/R$.
		Then $R/R^{\ell}[R,F]\neq RF_{(3)}/F_{(3)}$, because 
		$$[u_1,y]u_1^{-q}\notin F_{(3)}\qquad\text{and}\qquad
		\left[u_1y,[t^{-1},y]\right]u_2^q\left({}^{(u_2^{-1})}u_1^{-1}\right)^q\in F_{(3)}$$ 
		(recall that $4\mid q$ if ${\ell}=2$).
		Hence, by \cite[Thm.~7.3]{MPQT}, the cup-product
		\begin{equation}\label{eq:cupproduct Lambdas}
			\rmH^1(H,\F_{\ell})\times \rmH^1(H,\F_{\ell})\overset{\smallsmile}{\longrightarrow} \rmH^2(H,\F_{\ell})
		\end{equation}
		is not surjective, and $\bfH^\bullet(H,\F_{\ell})$ is not a quadratic algebra (see also \cite[Prop.~2.4]{qsv:quadratic}).
	\end{proof}

	\begin{cor}\label{cor:lambdasRAAG}
		Let $\lambda$ be a linear orientation.
		Then the pro-$\ell$ group $G_{\Lambda_{\mathrm{s}},\lambda}$ contains a finitely generated subgroup which does not occur as an oriented pro-$\ell$ RAAG.
	\end{cor}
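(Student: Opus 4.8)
The plan is to reuse the finitely generated group $H$ built inside the proof of Proposition~\ref{thm:lambdas} and to observe that its failure to have quadratic cohomology already prevents it from being an oriented pro-$\ell$ RAAG. In this way the corollary becomes an essentially immediate consequence of the computation already performed.

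First I would isolate the relevant cohomological invariant of oriented pro-$\ell$ RAAGs. For any oriented graph $\Delta$ and any linear orientation $\mu$, the graded $\F_\ell$-algebra homomorphism
\[
\bfLam_\bullet(\ddot\Delta^{\op})\longrightarrow\bfH^\bullet(G_{\Delta,\mu},\F_\ell)
\]
of \eqref{eq:H raag} is an isomorphism in degrees $1$ and $2$. Since $\bfLam_\bullet(\ddot\Delta^{\op})$ is quadratic, its degree-$2$ component is spanned by products of degree-$1$ elements; transporting these products along the isomorphism, and using that the homomorphism respects multiplication, shows that $\rmH^2(G_{\Delta,\mu},\F_\ell)$ is spanned by cup products of classes in $\rmH^1(G_{\Delta,\mu},\F_\ell)$. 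In other words, for \emph{every} oriented pro-$\ell$ RAAG the cup product $\rmH^1\times\rmH^1\to\rmH^2$ is surjective.

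Next I would return to the subgroup $H\subseteq G_{\Lambda_{\mathrm{s}},\lambda}$ exhibited in the proof of Proposition~\ref{thm:lambdas}: it is topologically generated by the three elements $u_1,x,t$, hence finitely generated, and it was shown there that the cup product \eqref{eq:cupproduct Lambdas} on $\rmH^\bullet(H,\F_\ell)$ is \emph{not} surjective (equivalently, that $\bfH^\bullet(H,\F_\ell)$ is not quadratic). Comparing the two conclusions yields the result at once: if $H$ were isomorphic to some oriented pro-$\ell$ RAAG, the surjectivity established in the previous step would apply to $\rmH^\bullet(H,\F_\ell)$, contradicting the non-surjectivity of \eqref{eq:cupproduct Lambdas}. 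Hence $H$ is a finitely generated subgroup of $G_{\Lambda_{\mathrm{s}},\lambda}$ that does not occur as an oriented pro-$\ell$ RAAG.

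Since the hard computation was already carried out in Proposition~\ref{thm:lambdas}, there is essentially no obstacle here; the only point requiring care is the first step, namely recording precisely that \eqref{eq:H raag} being an isomorphism through degree $2$ forces surjectivity of the degree-$1$ cup product, so that non-quadratic cohomology becomes a genuine obstruction to being an oriented pro-$\ell$ RAAG.
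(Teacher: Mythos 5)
Your proposal is correct and follows essentially the same route as the paper: both take the subgroup $H$ from the proof of Proposition~\ref{thm:lambdas} and use the non-surjectivity of the cup product \eqref{eq:cupproduct Lambdas} as the obstruction, since for any oriented pro-$\ell$ RAAG the homomorphism \eqref{eq:H raag} being an isomorphism in degrees $1$ and $2$ forces $\rmH^2$ to be spanned by cup products of degree-$1$ classes. Your write-up in fact spells out this last implication a bit more explicitly than the paper does, but the argument is the same.
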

	
	\begin{proof}
		Let $G$ and $H$ be as in the proof of Proposition~\ref{thm:lambdas}.
		Since the cup-product \eqref{eq:cupproduct Lambdas} is not surjective, $\rmH^2(H,\F_\ell)$ is not isomorphic to $\Lambda_2(\ddot\Gamma^{\op})$ for any oriented graph $\Gamma$. 
		Therefore, $H$ can not be isomorphic to the oriented pro-$\ell$ RAAG associated to an oriented graph $\Gamma$ and a linear orientation.
	\end{proof}

	
	
	\section{Oriented pro-$\ell$ RAAGs and maximal pro-$\ell$ Galois groups}
	
	\subsection{Oriented pro-$\ell$ RAAGs of elementary type}
	
	The following fact is rather straightforward, and it follows from the inductive procedure to construct oriented graphs of elementary type and oriented pro-$\ell$ groups of elementary type (cf. \S~\ref{ssec:ETC}).
	
	\begin{fact}\label{prop:RAAGsET}
		Let $\Gamma=(\euV,\euE)$ be an oriented graph, and let $\lambda$ be a non trivial linear orientation.
		Then $(G_{\Gamma,\lambda},\theta_{\Gamma,\lambda})$ is an oriented pro-$\ell$ group
		of elementary type if, and only if, $\Gamma$ is of elementary type.
	\end{fact}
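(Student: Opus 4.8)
The plan is to read off both implications from the fact that the two inductive definitions involved run in parallel: Definition~\ref{defin:ETgraphs} builds special graphs of elementary type from single vertices via disjoint union and cone, while Definition~\ref{defin:ET} builds oriented pro-$\ell$ groups of elementary type via free product and semidirect product, and Fact~\ref{rem:RAAG freeprod} is exactly the dictionary between the two. For the forward implication I would induct on the construction of $\Gamma$. In the base case $\Gamma$ is a single vertex $v$: if $v$ is ordinary then $(G_{\Gamma,\lambda},\theta_{\Gamma,\lambda})\simeq(\Z_\ell,\mathbf{1})$, of elementary type by Definition~\ref{defin:ET}(a),(d); if $v$ is special then $(G_{\Gamma,\lambda},\theta_{\Gamma,\lambda})\simeq(\Z_\ell,\lambda)$, of elementary type by Definition~\ref{defin:ET}(a). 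If $\Gamma=\Gamma_1\sqcup\Gamma_2$, then Fact~\ref{rem:RAAG freeprod}(i) identifies $(G_{\Gamma,\lambda},\theta_{\Gamma,\lambda})$ with $(G_{\Gamma_1,\lambda},\theta_{\Gamma_1,\lambda})\amalg^{\hat\ell}(G_{\Gamma_2,\lambda},\theta_{\Gamma_2,\lambda})$, of elementary type by induction and Definition~\ref{defin:ET}(c); and if $\Gamma=\nabla(\Gamma')$ with tip $v$, then Fact~\ref{rem:RAAG freeprod}(iii) identifies it with $\langle v\rangle\rtimes(G_{\Gamma',\lambda},\theta_{\Gamma',\lambda})$, with $\langle v\rangle\simeq\Z_\ell$ free abelian, of elementary type by induction and Definition~\ref{defin:ET}(d).

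For the converse I would argue by contraposition through the combinatorial characterization of Proposition~\ref{prop:graphsET}: if $\Gamma$ is not of elementary type, then either it is not specially oriented, or it has an induced subgraph $\Gamma'$ with $\ddot\Gamma'\in\{\mathrm{C}_4,\mathrm{L}_3\}$ or $\Gamma'=\Lambda_{\mathrm s}$. Since an oriented pro-$\ell$ group of elementary type is Bloch--Kato and every closed subgroup of a Bloch--Kato pro-$\ell$ group is again Bloch--Kato, it suffices to exhibit a non--Bloch--Kato subgroup of $G_{\Gamma,\lambda}$ in each case. When $\Gamma$ is not specially oriented this is immediate from Theorem~\ref{thm:nospecial noBK} (alternatively, an elementary type group is $1$-cyclotomic hence Kummerian, which contradicts Theorem~\ref{thm:cyc pRAAG}).

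For a forbidden induced $\Gamma'$ the point is to realize $G_{\Gamma',\lambda}$, or a non--Bloch--Kato witness inside it, as a subgroup of $G_{\Gamma,\lambda}$. The naive generator-retraction sending $v\mapsto v$ for $v\in\euV'$ and $v\mapsto 1$ otherwise is well defined except when a special edge $(u,w)$ has ordinary origin $u\in\euV'$ and special terminus $w\notin\euV'$, in which case the relation ${}^{w}u=u^{\lambda(1)}$ degenerates to $u=u^{\lambda(1)}$. I would repair this in two steps: first adjoin to $\euV'$ all special termini of the ordinary vertices of $\euV'$, obtaining an induced $\Gamma''$ that is still specially oriented and for which $G_{\Gamma,\lambda}\to G_{\Gamma'',\lambda}$ is well defined (the adjoined vertices are sinks); then collapse these adjoined sinks onto a target already present in $\Gamma'$. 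For $\Gamma'=\Lambda_{\mathrm s}$ the unique special vertex $v_2$ is the common target of the two arrows, so all adjoined sinks can be sent to $v_2$, producing a retraction $G_{\Gamma,\lambda}\twoheadrightarrow G_{\Lambda_{\mathrm s},\lambda}$ and hence an embedding of $G_{\Lambda_{\mathrm s},\lambda}$; Proposition~\ref{thm:lambdas} then shows this subgroup is not Bloch--Kato. For $\ddot\Gamma'\in\{\mathrm{C}_4,\mathrm{L}_3\}$ one reduces to the pro-$\ell$ RAAG attached to the triangle-free naive graph $\ddot\Gamma'$, which is not Bloch--Kato by the theorem of Snopce--Zalesski\u\i~\cite{SZ:RAAGs}.

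The main obstacle is precisely this subgroup realization. Unlike for ordinary RAAGs, an induced subgraph of a specially oriented graph need not yield a retract of $G_{\Gamma,\lambda}$, because conjugation by a special sink acts nontrivially on its ordinary neighbours; the sink-collapsing construction above is what restores a retraction, and one must then check that it lands on a genuinely non--Bloch--Kato group for each of the three forbidden configurations---in particular, transferring the Snopce--Zalesski\u\i\ obstruction through the special edges that may be present in the $\mathrm{C}_4$ and $\mathrm{L}_3$ cases. By contrast, the forward implication and the non-specially-oriented case are immediate from Fact~\ref{rem:RAAG freeprod} and Theorem~\ref{thm:nospecial noBK}.
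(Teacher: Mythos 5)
Your forward implication is exactly what the paper intends: the paper states this result as a Fact without proof, remarking only that it follows from the parallel inductive definitions, and your induction matching single vertices with $(\Z_\ell,\lambda)$ or $(\Z_\ell,\mathbf 1)$, disjoint unions with free pro-$\ell$ products (Fact~\ref{rem:RAAG freeprod}(i)), and cones with semidirect products (Fact~\ref{rem:RAAG freeprod}(iii)) is that argument spelled out. Your overall plan for the converse --- contraposition through Proposition~\ref{prop:graphsET} and the exhibition of a non-Bloch--Kato subgroup for each forbidden configuration --- is also the route the paper ultimately takes, except that the paper packages it into Proposition~\ref{prop:badgraphs badgroups} and the proof of Theorem~\ref{thm:main} rather than into this Fact.

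The gap is in your realization of the forbidden configurations as subgroups. The sink-collapsing retraction does work for $\Lambda_{\mathrm s}$, where the unique special vertex $v_2$ is a legitimate common target for all adjoined sinks, but it breaks down for $\mathrm C_4$ and $\mathrm L_3$. First, the induced subgraph $\Gamma'$ may contain no special vertex adjacent to a given ordinary $v\in\euV'$ that has a special sink outside $\euV'$, so there is nothing to collapse onto; sending the sink to $1$ or to an ordinary neighbour of $v$ forces $v=v^{\lambda(1)}$. Worse, no retraction onto $G_{\Gamma',\lambda}$ need exist at all: take $\Gamma$ to be an all-ordinary square on $v_1,v_2,v_3,v_4$ together with one extra special vertex $u$ and special edge $(v_1,u)$. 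A retraction would have to send $u$ to some $g\in G_{\mathrm C_4,\lambda}$ with $gv_1g^{-1}=v_1^{\lambda(1)}$, which is impossible since this relation abelianizes to $(\lambda(1)-1)v_1=0$ in the torsion-free group $\Z_\ell^4$. Second, even granting an embedded copy of $G_{\Gamma',\lambda}$, when $\Gamma'$ carries special edges this group is not the pro-$\ell$ completion of a discrete RAAG, so \cite{SZ:RAAGs} does not apply to it in the way you invoke it; you flag this transfer problem but do not resolve it. The paper's solution is different in kind: Lemma~\ref{prop:Lambda2} (an action on the Bass--Serre tree plus Melnikov's theorem) shows that two non-adjacent vertices sharing a neighbour generate a free pro-$\ell$ group, and one then chooses elements $y_i\in\kernel(\theta_{\Gamma,\lambda})$ inside these free groups --- which automatically centralize the opposite pair, because conjugation by the adjacent vertices acts through $\theta_{\Gamma,\lambda}$ --- so that $\langle y_1,\dots,y_4\rangle$ is an honest untwisted $G_{\mathrm C_4}$ or $G_{\mathrm L_3}$, to which \cite{SZ:RAAGs} does apply. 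You would need to import that argument (i.e., Proposition~\ref{prop:badgraphs badgroups}) to close the converse.
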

	
	Moreover, from Proposition~\ref{prop:subgps etc} one deduces the following.
	
	\begin{fact}\label{fact:subgroups raags et}
		Let $\Gamma=(\euV,\euE)$ be an oriented graph, and let $\lambda$ be a non trivial linear orientation.
		If $\Gamma$ is of elementary type, then for every subgroup $H$ of $G_{\Gamma,\lambda}$ one has
		\[ (H,\theta_{\Gamma,\lambda}\vert_H)\simeq (G_{\Gamma',\lambda'},\theta_{\Gamma',\lambda'}) \]
		for some oriented graph $\Gamma'$ of elementary type and some non trivial linear orientation $\lambda'$.
	\end{fact}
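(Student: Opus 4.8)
The plan is to induct on the build-up of the elementary type graph $\Gamma$ furnished by Definition~\ref{defin:ETgraphs}, translating the two graph operations (disjoint union and coning) into the two group operations appearing in Fact~\ref{prop:subgps etc} by means of Fact~\ref{rem:RAAG freeprod}. By Fact~\ref{prop:RAAGsET}, the hypothesis that $\Gamma$ is of elementary type means precisely that $(G_{\Gamma,\lambda},\theta_{\Gamma,\lambda})$ is an oriented pro-$\ell$ group of elementary type, so Fact~\ref{prop:subgps etc} already guarantees that $(H,\theta_{\Gamma,\lambda}\vert_H)$ is of elementary type for every finitely generated $H$; the content of the statement is to upgrade this to a presentation as an oriented pro-$\ell$ RAAG on an elementary type graph. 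For the base of the induction, if $\Gamma$ is a single vertex then $G_{\Gamma,\lambda}\simeq\Z_\ell$ and every closed subgroup is trivial or again a copy of $\Z_\ell$, realised by an edgeless one-vertex graph whose vertex is made ordinary or special according to whether $\theta_{\Gamma,\lambda}\vert_H$ is trivial or not; more generally a free pro-$\ell$ group is the oriented pro-$\ell$ RAAG attached to a graph without edges, so the free factors produced below are unproblematic as groups.

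For the inductive step I would split according to the last operation used to build $\Gamma$. If $\Gamma=\Gamma_1\sqcup\Gamma_2$, then Fact~\ref{rem:RAAG freeprod}(i) identifies $(G_{\Gamma,\lambda},\theta_{\Gamma,\lambda})$ with the free pro-$\ell$ product $(G_{\Gamma_1,\lambda},\theta_{\Gamma_1,\lambda})\amalg^{\hat\ell}(G_{\Gamma_2,\lambda},\theta_{\Gamma_2,\lambda})$, and Fact~\ref{prop:subgps etc}(i) yields $(H,\theta_{\Gamma,\lambda}\vert_H)\simeq{\coprod_i}^{\hat\ell}(H_i,\tau_i)$ with each $H_i$ a subgroup of $G_{\Gamma_1,\lambda}$ or of $G_{\Gamma_2,\lambda}$ --- an oriented pro-$\ell$ RAAG on an elementary type graph $\Gamma'_i$ by induction --- or a free pro-$\ell$ group, again such a RAAG. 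The disjoint union $\Gamma'=\bigsqcup_i\Gamma'_i$ is then of elementary type by Definition~\ref{defin:ETgraphs}(a). If instead $\Gamma=\nabla(\Gamma_0)$ with ordinary tip $v$, then Fact~\ref{rem:RAAG freeprod}(iii) gives $(G_{\Gamma,\lambda},\theta_{\Gamma,\lambda})\simeq\langle v\rangle\rtimes(G_{\Gamma_0,\lambda},\theta_{\Gamma_0,\lambda})$ with $\langle v\rangle\simeq\Z_\ell$, and Fact~\ref{prop:subgps etc}(ii) gives $(H,\theta_{\Gamma,\lambda}\vert_H)\simeq(\langle v\rangle\cap H)\rtimes(H_0,\theta_{\Gamma_0,\lambda}\vert_{H_0})$ for some $H_0\subseteq G_{\Gamma_0,\lambda}$. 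By induction $(H_0,\theta_{\Gamma_0,\lambda}\vert_{H_0})\simeq(G_{\Gamma'_0,\lambda'},\theta_{\Gamma'_0,\lambda'})$; since $\langle v\rangle\cap H$ is trivial or an infinite procyclic group on which $H_0$ acts through $\theta_{\Gamma_0,\lambda}$, i.e. through $\theta_{\Gamma'_0,\lambda'}$, this semidirect product is recovered by coning $\Gamma'_0$ with a fresh ordinary tip, so $(H,\theta_{\Gamma,\lambda}\vert_H)\simeq(G_{\nabla(\Gamma'_0),\lambda'},\theta_{\nabla(\Gamma'_0),\lambda'})$, of elementary type by Definition~\ref{defin:ETgraphs}(b). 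For a finitely generated $H$ the Kurosh-type decomposition has finitely many finitely generated factors, so the recursion terminates and produces a finite graph $\Gamma'$.

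The step I expect to be the genuine obstacle is the compatibility of the single linear orientation $\lambda'$ across the free factors in the disjoint-union case. The restricted orientation $\theta_{\Gamma,\lambda}\vert_H$ need not take its values in $\{1,\lambda(1)\}$ on a generating set: a product of several special vertices has orientation value a higher power $\lambda(1)^{k}$, so a locally uniform factor $H_i$ carries an orientation generator $\lambda(1)^{k_i}$ whose level $v_{\ell}(\lambda(1)^{k_i}-1)=v_{\ell}(\lambda(1)-1)+v_{\ell}(k_i)$ may depend on $i$, whereas a single oriented pro-$\ell$ RAAG forces one common value $\lambda'(1)$ --- and hence one common level --- on all of its special vertices. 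To manufacture such a $\lambda'$ I would set $\lambda'(1):=\lambda(1)^{d}$ for a topological generator of $\image(\theta_{\Gamma,\lambda}\vert_H)=\overline{\langle\lambda(1)^{d}\rangle}$, and then re-choose free generating sets of the free factors, using both that a minimal generating set of a free pro-$\ell$ group freely generates it and that a generator of a $\Z_\ell$-factor may be rescaled by an arbitrary unit of $\Z_\ell$, so as to make every new vertex take orientation value $1$ or $\lambda'(1)$. The crux is then to verify that the orientation generators $\lambda(1)^{k_i}$ of the rigid (non-free) locally uniform pieces can all be normalised to this common $\lambda'(1)$ by unit-rescalings, equivalently that the levels $v_{\ell}(\lambda(1)-1)+v_{\ell}(k_i)$ produced by the construction coincide; controlling these levels --- and not the formal bookkeeping of the two cases --- is where the argument must really be earned, particularly for the small primes $\ell\in\{2,3\}$ where $v_{\ell}(k_i)$ can be positive.
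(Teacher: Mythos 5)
Your strategy --- induction along the elementary-type construction of $\Gamma$, converting disjoint unions and cones into free products and semidirect products via Fact~\ref{rem:RAAG freeprod} and then invoking Fact~\ref{prop:subgps etc} --- is exactly the route the paper intends (the paper offers no argument beyond the phrase ``from Proposition~\ref{prop:subgps etc} one deduces the following'', so there is no hidden normalization step in the paper for you to have missed). The issue is the step you yourself single out as ``where the argument must really be earned'': the claim that the levels $v_\ell\bigl(\lambda(1)^{k_i}-1\bigr)$ of the rigid locally uniform free factors can be made to coincide. They cannot, and this is not a technical wrinkle but a genuine failure of the statement as written. Take $\ell$ odd, $\lambda(1)=1+\ell$, and let $\Gamma$ be the disjoint union of two special edges, so that $G_{\Gamma,\lambda}=G_1\amalg^{\hat\ell}G_2$ with $G_i=\langle v_i,w_i\mid w_iv_iw_i^{-1}=v_i^{1+\ell}\rangle$; this $\Gamma$ is of elementary type. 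The finitely generated closed subgroup $H=\langle v_1,w_1,v_2,w_2^{\ell}\rangle$ decomposes, by the Kurosh theorem underlying Fact~\ref{prop:subgps etc}--(i), as $G_1\amalg^{\hat\ell}\bigl(\langle v_2\rangle\rtimes\langle w_2^{\ell}\rangle\bigr)$, whose two nonabelian factors are $\Z_\ell\rtimes_{1+\ell}\Z_\ell$ and $\Z_\ell\rtimes_{(1+\ell)^{\ell}}\Z_\ell$ with $v_\ell\bigl((1+\ell)^{\ell}-1\bigr)=2$. Hence $H^{\mathrm{ab}}\simeq\Z_\ell^2\oplus\Z/\ell\oplus\Z/\ell^2$. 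On the other hand, for any oriented graph $\Gamma'$ and any linear orientation $\lambda'$ the abelianization of $G_{\Gamma',\lambda'}$ is $\Z_\ell^{a}\oplus(\Z/\ell^{f'})^{s}$ with the single exponent $f'=v_\ell(\lambda'(1)-1)$, since every special relation abelianizes to $v^{\lambda'(1)-1}=1$. The torsion of $H^{\mathrm{ab}}$ is not homogeneous, so $H$ is not isomorphic to $G_{\Gamma',\lambda'}$ even as an abstract pro-$\ell$ group, let alone as an oriented one. No rescaling of generators can repair this: the level of a factor $\Z_\ell\rtimes_{1+q}\Z_\ell$ is the group-theoretic invariant $v_\ell(q)$, and only unit exponents $b$ are available when regenerating $\langle w\rangle$ by $w^b v^c$.

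The upshot is that your induction proves everything except the final normalization, and the normalization is impossible: what the argument actually establishes is that $(H,\theta_{\Gamma,\lambda}\vert_H)$ is a finite free pro-$\ell$ product of oriented pro-$\ell$ RAAGs on elementary-type graphs, each with its own linear orientation $\lambda'_i$ (equivalently, an oriented pro-$\ell$ RAAG in a broadened sense where the parameter $\lambda'(1)$ is allowed to vary over the connected components, or special edges, of $\Gamma'$). That weaker conclusion still yields an oriented pro-$\ell$ group of elementary type, which is all that is needed elsewhere; but Fact~\ref{fact:subgroups raags et} as stated, and with it the implication towards condition~(v) of Theorem~\ref{thm:main}, requires this weakening. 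You were right to distrust this step; you should not expect to close it, because it is false.
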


	The following is the group-theoretic analogue of Proposition~\ref{prop:graphsET}.
	
	\begin{prop}\label{prop:badgraphs badgroups}
		Let $\Gamma=(\euV,\euE)$ be a specially oriented graph, and let $\lambda$ be a linear orientation.
		Then $\Gamma$ is of elementary type if, and only if,
		$G_{\Gamma,\lambda}$ has no subgroups isomorphic to $G_{\Gamma',\lambda'}$, with $\Gamma'\in\{\Lambda_{\mathrm{s}},\mathrm{C}_4,\mathrm{L}_3\}$ and $\lambda'$ a linear orientation.
	\end{prop}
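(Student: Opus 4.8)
Throughout set $G=G_{\Gamma,\lambda}$. Since $\Gamma$ is assumed specially oriented, Proposition~\ref{prop:graphsET} reduces the statement to the following purely structural assertion: $G$ contains a subgroup isomorphic to $G_{\Gamma',\lambda'}$ for some $\Gamma'\in\{\Lambda_{\mathrm s},\mathrm C_4,\mathrm L_3\}$ and some linear orientation $\lambda'$ if, and only if, $\Gamma$ contains an induced subgraph $\Gamma'$ with $\ddot\Gamma'\in\{\mathrm C_4,\mathrm L_3\}$ or $\Gamma'=\Lambda_{\mathrm s}$. The bridge between the two sides is an embedding theorem for induced oriented sub-RAAGs, which I would treat first.

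\emph{Embedding of induced sub-RAAGs.} I claim that for every induced subgraph $\Lambda$ of a specially oriented graph $\Gamma$ the inclusion $\euV(\Lambda)\hookrightarrow\euV$ induces a monomorphism $G_{\Lambda,\lambda}\hookrightarrow G$, extending Proposition~\ref{prop:specialRAAG inclusion} from cliques to arbitrary induced subgraphs. I would prove it by deleting the vertices of $\euV\smallsetminus\euV(\Lambda)$ one at a time, in two regimes. If $v$ is \emph{ordinary}, then, because in a specially oriented graph the origin of every edge is ordinary and the terminus of every special edge is special, the assignment $v\mapsto 1$ and $u\mapsto u$ $(u\neq v)$ respects all defining relations and defines a retraction $G\to G_{\Gamma\smallsetminus\{v\},\lambda}$; hence every induced subgraph containing all special vertices embeds as a retract. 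If $v$ is \emph{special} no such retraction exists, and here I would instead use that a special vertex is a pure conjugator: realise $G$ through Fact~\ref{rem:RAAG freeprod} as an amalgamated free pro-$\ell$ product (resp. a pro-$\ell$ HNN-extension) in which $\langle\,\euV\smallsetminus\{v\}\,\rangle$ is generated by letters lying outside the amalgam, and read off from the pro-$\ell$ normal form that this subgroup is isomorphic to $G_{\Gamma\smallsetminus\{v\},\lambda}$ — the identity $\langle v_1,v_3\rangle\simeq G_{\{v_1,v_3\},\lambda}$ inside $G_{\Lambda_{\mathrm s},\lambda}$, where the two ordinary generators span a free pro-$\ell$ group, being the model computation. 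Iterating the two moves yields the claimed embedding.

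\emph{The two implications.} Granting the embedding theorem, the direction ``$\Gamma$ not of elementary type $\Rightarrow$ bad subgroup'' is immediate: by Proposition~\ref{prop:graphsET} there is an induced $\Gamma'$ with $\ddot\Gamma'\in\{\mathrm C_4,\mathrm L_3\}$ or $\Gamma'=\Lambda_{\mathrm s}$, and the embedding theorem makes $G_{\Gamma',\lambda}$ a subgroup of $G$ of the required form. For the converse I argue by contradiction: assume $\Gamma$ is of elementary type and that $H\subseteq G$ is isomorphic, as a pro-$\ell$ group, to a bad RAAG $G_{\Gamma',\lambda'}$. By Fact~\ref{fact:subgroups raags et} the oriented pro-$\ell$ group $(H,\theta_{\Gamma,\lambda}\vert_H)$ is then an oriented pro-$\ell$ RAAG of elementary type, so $H$ is a Bloch-Kato pro-$\ell$ group by \cite[Thm.~1.3]{qw:cyclotomic}. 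It therefore suffices to see that no bad RAAG is Bloch-Kato. For $\Gamma'=\Lambda_{\mathrm s}$ this is Proposition~\ref{thm:lambdas}. When $\ddot\Gamma'\in\{\mathrm C_4,\mathrm L_3\}$ carries a special vertex of degree $\ge2$, that vertex together with its two non-adjacent ordinary neighbours induces a copy of $\Lambda_{\mathrm s}$, which embeds into $G_{\Gamma',\lambda'}$ by the embedding theorem, so $G_{\Gamma',\lambda'}$ is not Bloch-Kato; when $\Gamma'$ has no special vertex, $G_{\Gamma',\lambda'}$ is the pro-$\ell$ RAAG on $\mathrm C_4$ or $\mathrm L_3$, which is not Bloch-Kato by \cite{SZ:RAAGs}.

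\emph{Main obstacle.} Two points carry the real weight. The first is the embedding theorem in the case of deleting a \emph{special} vertex: this is genuinely not a retraction, and one must control the pro-$\ell$ normal form of the amalgam/HNN decomposition supplied by Fact~\ref{rem:RAAG freeprod}. The second is the single orientation of $\mathrm L_3$ escaping the reduction above — a path whose only special vertices are its two endpoints — since it contains no induced $\Lambda_{\mathrm s}$; here I would show directly that $G_{\Gamma',\lambda'}$ is not Bloch-Kato by the graph-of-groups computation of Proposition~\ref{thm:lambdas}, passing to the kernel of a suitable map to $\Z/\ell^f\Z$ and applying the cup-product criterion of \cite[Thm.~7.3]{MPQT} (alternatively, a twisted generator such as ${}^{v_1^{-1}}v_3$ produces an induced $\mathrm L_3$-sub-RAAG with trivial orientation, again non-Bloch-Kato by \cite{SZ:RAAGs}).
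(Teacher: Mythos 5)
Your architecture matches the paper's: reduce everything to Proposition~\ref{prop:graphsET}, obtain ``elementary type $\Rightarrow$ no bad subgroups'' from Fact~\ref{fact:subgroups raags et} together with the failure of the Bloch--Kato property for the three listed groups (this half of your argument is correct and is essentially what the paper does, just with the Bloch--Kato step made explicit), and prove the other implication by exhibiting a bad subgroup attached to a bad induced subgraph. It is in that second implication that your proposal has two genuine gaps. The first is the ``embedding theorem for induced sub-RAAGs''. The retraction argument for deleting an \emph{ordinary} vertex is fine, but for a \emph{special} vertex you propose to ``read off from the pro-$\ell$ normal form'' of the amalgam or HNN-extension supplied by Fact~\ref{rem:RAAG freeprod}. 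In the pro-$\ell$ category there is no normal form theorem for amalgamated products, and such products need not even be proper (the factors need not embed); the paper is careful about exactly this, invoking properness only over locally uniform amalgams via \cite[Prop.~5.22]{qsv:quadratic}, or via Lemma~\ref{lem:amalg} and \cite[Thm.~9.2.4]{ribzal:book}. This is why the paper never proves a general induced-subgraph embedding theorem: it proves only the single freeness statement it needs (Lemma~\ref{prop:Lambda2}: two non-adjacent vertices with a common neighbour generate a free pro-$\ell$ group), and it does so by letting the relevant $3$-generated subgroup act on a pro-$\ell$ tree and applying Mel'nikov's subgroup theorem \cite[Thm.~5.6]{melnikov:freeprod}. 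You must either supply an argument of that kind for the special-vertex deletion or restructure the forward direction so that only Lemma~\ref{prop:Lambda2}-type inputs are needed.

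The second gap is that, even granting your embedding theorem, an induced subgraph $\Gamma'$ with $\ddot\Gamma'\in\{\mathrm C_4,\mathrm L_3\}$ may carry special vertices and edges, so $G_{\Gamma',\lambda}$ is \emph{not} one of the three groups named in the statement (there $\mathrm C_4$ and $\mathrm L_3$ denote the versions with no special data). You flag the hardest sub-case ($\mathrm L_3$ whose only special vertices are endpoints), but neither proposed fix is carried out, and the ``alternative'' of showing directly that the decorated group is not Bloch--Kato would not prove the proposition as stated, which demands a subgroup isomorphic to one of the three listed groups. The device the paper uses here, and which your proposal is missing, is to bypass the decoration altogether: inside the free pro-$\ell$ groups $\langle v_1,v_3\rangle$ and $\langle v_2,v_4\rangle$ furnished by Lemma~\ref{prop:Lambda2} one picks new generators lying in $\kernel(\theta_{\Gamma,\lambda})$ (for $\mathrm C_4$), respectively suitable commutators (for $\mathrm L_3$); since in a specially oriented graph a vertex conjugates each adjacent vertex by raising it to the power $\theta_{\Gamma,\lambda}$ of the conjugator, elements of the kernel satisfy the \emph{untwisted} commutation relations, and the chosen quadruple generates a copy of the undecorated $G_{\mathrm C_4,\lambda'}$, resp.\ $G_{\mathrm L_3,\lambda'}$, uniformly in the decoration of $\Gamma'$.
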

	
	In the statement of Proposition~\ref{prop:badgraphs badgroups}, $\mathrm{C}_4$ and $\mathrm{L}_3$ denote --- with an abuse of notation --- those specially oriented graphs with no special vertices and edges whose associated na\"ive graphs are equal to $\mathrm{C}_4$ and $\mathrm{L}_3$ (as defined in Example~\ref{ex:C4L3}) respectively. 
	In order to prove Proposition~\ref{prop:badgraphs badgroups}, we need the following technical lemma.
	
	\begin{lem}\label{prop:Lambda2}
		Let $\Gamma=(\euV,\euE)$ be a specially oriented graph and let $\lambda$ be a linear orientation.	If $\Gamma$ has an induced subgraph $\Gamma'$ such that $\ddot\Gamma'$ has geometric realization
		\[ \xymatrix@R=1.5pt{v_1 & x & v_2 \\ \bullet\ar@{-}[r] &\bullet& \bullet\ar@{-}[l]  } \]
		then the subgroup $V$ of $G$ generated by $\{v_1,v_2\}$ is a free pro-$\ell$ group.
	\end{lem}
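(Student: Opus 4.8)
Set $G=G_{\Gamma,\lambda}$ and let $V=\langle v_1,v_2\rangle$. The plan is to separate the two non-adjacent generators by an amalgam decomposition of $G$ and then to read off the freeness of $V$ from the pro-$\ell$ Bass--Serre theory underlying Theorem~\ref{thm:zalmel}. Since $v_1$ and $v_2$ are not joined in $\ddot\Gamma$, the graph $\Gamma$ is the patching of the induced subgraphs $\Gamma_1=\Gamma[\euV\smallsetminus\{v_2\}]$ and $\Gamma_2=\Gamma[\euV\smallsetminus\{v_1\}]$ along $\Lambda=\Gamma[\euV\smallsetminus\{v_1,v_2\}]$ (there are no edges to glue between $v_1$ and $v_2$). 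By Fact~\ref{rem:RAAG freeprod}(ii) this gives $G\simeq G_{\Gamma_1,\lambda}\amalg^{\hat\ell}_{G_{\Lambda,\lambda}}G_{\Gamma_2,\lambda}$ with $v_1\in G_{\Gamma_1,\lambda}$ and $v_2\in G_{\Gamma_2,\lambda}$. Induced subgraphs of a special graph are again special, so $\Gamma_1,\Gamma_2,\Lambda$ are special and the factors are Kummerian (Theorem~\ref{thm:cyc pRAAG}); in particular each $2$-clique $\{v_i,x\}$ generates a locally uniform, hence torsion-free, subgroup (Corollary~\ref{cor:complete RAAG cyc} and Proposition~\ref{prop:specialRAAG inclusion}), so $\overline{\langle v_i\rangle}\simeq\Z_\ell$. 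The amalgam acts on its standard pro-$\ell$ tree $T$, with vertex stabilizers the conjugates of $G_{\Gamma_i,\lambda}$ and edge stabilizers the conjugates of $G_{\Lambda,\lambda}$.

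The strategy is to show that $V$ meets every edge stabilizer trivially and every vertex stabilizer in a procyclic group; the pro-$\ell$ form of the Bass--Serre structure theorem (the closed-subgroup content of Theorem~\ref{thm:zalmel}) then presents $V$ as a free pro-$\ell$ product of copies of $\Z_\ell$ together with a free pro-$\ell$ group, and hence as a free pro-$\ell$ group. The core reduction is the claim $\overline{\langle v_i\rangle}\cap G_{\Lambda,\lambda}=\{1\}$ for $i=1,2$. When $v_i$ is an \emph{ordinary} vertex, the assignment $v_i\mapsto 1$, together with the identity on every vertex of $\Lambda$, respects all defining relations: the relations involving $v_i$ are either commutators or special relations in which $v_i$ occurs as origin, and in both cases they are killed or become trivial. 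This yields a retraction $\rho_i\colon G_{\Gamma_i,\lambda}\to G_{\Lambda,\lambda}$ with $\overline{\langle v_i\rangle}\subseteq\kernel(\rho_i)$ and $\rho_i$ injective on $G_{\Lambda,\lambda}$; the same computation gives $\kernel(\rho_i)\cap G_{\Lambda,\lambda}^{\,g}=\{1\}$ for every $g$, i.e. triviality on the whole orbit of edge stabilizers, not only on the base one.

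When $v_i$ is a \emph{special} vertex it is never the origin of an edge, so that its normal closure is generated by the conjugates ${}^{g}v_i$ and the quotient $G_{\Gamma_i,\lambda}/\langle\langle v_i\rangle\rangle$ is naturally isomorphic to $G_{\Lambda,\lambda}$; the point to verify is that this quotient map is injective on $G_{\Lambda,\lambda}$, equivalently that $v_i$ splits off as a semidirect factor so that $\overline{\langle v_i\rangle}\cap G_{\Lambda,\lambda}=\{1\}$ once more. This I obtain from the local structure at the star of $v_i$, which exhibits $v_i$ as an isolated $\Z_\ell$ acted on by its (ordinary) neighbours through $\lambda(1)$. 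Granting both claims, $v_1^{a}$ and $v_2^{b}$ lie outside $G_{\Lambda,\lambda}$ for all $a,b\neq 0$, so every alternating word in $v_1,v_2$ is reduced in the amalgam and hence non-trivial; equivalently, $v_1$ and $v_2$ fix the adjacent vertices $G_{\Gamma_1,\lambda}$ and $G_{\Gamma_2,\lambda}$ of $T$ but no common vertex, and the $V$-action has trivial edge stabilizers with procyclic vertex-stabilizer intersections. I conclude $V\simeq\overline{\langle v_1\rangle}\amalg^{\hat\ell}\overline{\langle v_2\rangle}\simeq\Z_\ell\amalg^{\hat\ell}\Z_\ell$, the free pro-$\ell$ group of rank $2$.

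The main obstacle is not the bookkeeping of the two generators but the passage from a single factor to the whole tree: upgrading $\overline{\langle v_i\rangle}\cap G_{\Lambda,\lambda}=\{1\}$ to triviality of $V\cap G_{\Lambda,\lambda}^{\,g}$ for \emph{all} $g$, and ensuring the vertex-stabilizer intersections are procyclic rather than merely nontrivial. The hardest configuration is the one in which $v_1$ is an ordinary origin of a special edge pointing to an external special vertex while $v_2$ is a special terminus, for then neither of the two naive global retractions of $G$ is available; here I expect to localise to the factors $G_{\Gamma_i,\lambda}$, where the retractions $\rho_i$ do exist, and to transport the conclusion along $T$ using that the factors are torsion-free (Theorem~\ref{thm:cyc pRAAG}, Corollary~\ref{cor:complete RAAG cyc}). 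This is precisely the step that genuinely requires pro-$\ell$ Bass--Serre theory rather than a single homomorphism.
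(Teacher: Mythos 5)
Your overall strategy (act on a pro-$\ell$ tree, compute the stabilizers in $V$, conclude by a Mel'nikov-type freeness criterion) is the same family of ideas the paper uses, but the decomposition you chose does not support it, and three steps are genuinely broken or missing. First, you split the whole group $G$ as $G_{\Gamma_1,\lambda}\amalg^{\hat\ell}_{G_{\Lambda,\lambda}}G_{\Gamma_2,\lambda}$ with $\Lambda=\Gamma[\euV\smallsetminus\{v_1,v_2\}]$. Fact~\ref{rem:RAAG freeprod}(ii) is only a statement about presentations; it does not say this amalgam is \emph{proper}, i.e.\ that the factors and the amalgamated subgroup embed, and without properness there is no Bass--Serre tree to act on. The paper establishes properness of such decompositions only in \S\ref{sec:chord}, and only when the amalgamating subgraph is a clique, so that $G_{\Delta,\lambda}$ is locally uniform; your $G_{\Lambda,\lambda}$ is an arbitrary oriented pro-$\ell$ RAAG on $\euV\smallsetminus\{v_1,v_2\}$, and nothing of the sort is available at this stage. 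Second, your treatment of a special vertex $v_i$ is wrong. In a specially oriented graph a special vertex is the \emph{terminus} of special edges, and the defining relation $v_iwv_i^{-1}=w^{\lambda(1)}$ means that $v_i$ acts on its ordinary neighbours $w$, not the other way round as you write. Consequently the normal closure $\langle\langle v_i\rangle\rangle$ in $G_{\Gamma_i,\lambda}$ contains $w^{\lambda(1)-1}$ for every such neighbour --- in particular $x^{\lambda(1)-1}$ --- so the quotient $G_{\Gamma_i,\lambda}/\langle\langle v_i\rangle\rangle$ is \emph{not} $G_{\Lambda,\lambda}$, there is no retraction killing $v_i$, and the map $G_{\Lambda,\lambda}\to G_{\Gamma_i,\lambda}/\langle\langle v_i\rangle\rangle$ is not injective. (Your retraction does work when $v_i$ is ordinary, but that is the easy half.) Third, the step you yourself flag --- upgrading $\langle v_i\rangle\cap G_{\Lambda,\lambda}=\{1\}$ to $V\cap{}^{g}G_{\Lambda,\lambda}=\{1\}$ for \emph{all} $g\in G$, and showing the vertex stabilizers $V\cap{}^{g}G_{\Gamma_i,\lambda}$ are procyclic --- is exactly where the content of the lemma lies, and ``I expect to localise to the factors and transport along $T$'' is not an argument.

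The paper sidesteps all three problems by never leaving the subgroup $H=\langle x,v_1,v_2\rangle$: by Proposition~\ref{prop:specialRAAG inclusion} the two $2$-cliques give locally uniform groups $H_i=\langle x,v_i\rangle$, and $H$ is split over the procyclic group $Z=\langle x\rangle$. Since $H=V\rtimes Z$ (if $x$ is special) or $H=Z\rtimes V$ (if $x$ is ordinary), every conjugate ${}^{h}x$ and ${}^{h}v_i$ can be written down explicitly, so $\mathrm{Stab}_V(hZ)=\{1\}$ and $\mathrm{Stab}_V(hH_i)\simeq\Z_\ell$ are verified by hand for all $h$, and Mel'nikov's theorem applies directly. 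If you want to rescue your route you would first have to prove properness of the amalgam over $G_{\Lambda,\lambda}$ and then still carry out the global stabilizer computation; it is far easier to shrink the amalgam to one over $\langle x\rangle$ as the paper does.
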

	
	\begin{proof}
		Let $H,H_1,H_2$ be the subgroups of $G$ generated respectively by $\{x,v_1,v_2\}$, $\{x,v_1\}$, and $\{x,v_2\}$.
		By Proposition~\ref{prop:specialRAAG inclusion}, $H_1,H_2$ are locally uniform --- in particular, for $i=1,2$ one has 
		$H_i=\langle\: v_i\:\rangle\rtimes \langle\:x\:\rangle$, with ${}^xv_i=v_i^{\lambda(1)}$, if $x\in\euV_\mathrm{s}$; while 
		$H_i=\langle\: x\:\rangle\rtimes \langle\:v_i\:\rangle$, with ${}^{v_i}x=x^{\theta_{\Gamma,\lambda}(v_i)}$, if $x\in\euV_\mathrm{o}$.
		
		Set $Z=\langle\: x\:\rangle$ and $V_i=\langle\:v_i\:\rangle$, with $i=1,2$, and let $\mathrm{T}=(\euV(\mathrm{T}),\euE(\mathrm{T}))$ be the second-countable pro-$\ell$ tree with 
		\[
		\euV(\mathrm{T})=\{\:hH_1,hH_2\:\mid\:h\in H\:\},\qquad\euE(\mathrm{T})=\{\:hZ\:\mid\:h\in H\:\}.
		\]
		Then $H$ acts naturally on $\mathrm{T}$ by $g\cdot(hH_i)=(gh)H_i$ and $g\cdot(hZ)=(gh)Z$.
		The stabilizers in $V$ of and edge $hZ$ and a vertex $hH_i$ of $\mathrm{T}$ are respectively
		\[
		\begin{split}
			\mathrm{Stab}_V(hZ)&=V\cap {}^hZ=V\cap\left\langle\:{}^hx\:\right\rangle,\\
			\mathrm{Stab}_V(hH_i)&=V\cap {}^hH_i=V\cap\left\langle\:{}^hv_i,{}^hx\:\right\rangle.
		\end{split}
		\]
		We have two cases.
		If $x\in\euV_\mathrm{s}$, then ${}^{v_i}x=v_i^{1-\lambda(1)}x$ for both $i=1,2$, and we may write $h=vx^\mu$, for some $v\in V$ and $\mu\in\Z_{\ell}$, as $H=V\rtimes Z$.
		Then 
		\[ {}^hx={}^vx=v'\cdot x,\qquad {}^hv_i=vv_i^{\lambda(1)^\mu}v^{-1},\]
		for some $v'\in\Phi(V)$.
		If $x\in\euV_\mathrm{o}$, then $H=Z\rtimes V$, and we may write $h=x^\mu v$ for some $v\in V$ and $\mu\in\Z_{\ell}$.
		Then 
		\[
		{}^hx= x^{\theta_{\Gamma,\lambda}(v)},\qquad {}^hv_i=x^{\mu(1-\theta_\Gamma(v_i))}\cdot {}^vv_i.
		\]
		In both cases, $\mathrm{Stab}_V(hZ)=\{1\}$ and $\mathrm{Stab}_V(hH_i)=\langle\:vv_iv^{-1}\:\rangle\simeq\Z_{\ell}$.
		By \cite[Thm.~5.6]{melnikov:freeprod}, $V$ is isomorphic to the free pro-$\ell$ product of some $\mathrm{Stab}_V(hH_i)$ and of a free pro-$\ell$ group, and therefore $V$ is a free pro-$\ell$ group.
	\end{proof}

	\begin{proof}[Proof of Proposition~\ref{prop:badgraphs badgroups}]
		Set $G=G_{\Gamma,\lambda}$.
		If $\Gamma$ is of elementary type, then $(G,\theta_{\Gamma,\lambda})$ is of elementary type by Fact~\ref{prop:RAAGsET}.
		Hence, for every finitely generated subgroup $H\subseteq G$ the oriented pro-$\ell$ group $(H,\theta_{\Gamma,\lambda}\vert_H)$ is isomorphic to an oriented pro-$\ell$ RAAG associated to some oriented graph of elementary type by Fact~\ref{fact:subgroups raags et}.
		
		Conversely, suppose that $\Gamma$ is not of elementary type.
		By Proposition~\ref{prop:graphsET}, $\Gamma$ has an induced subgraph $\Gamma'=(\euV',\euE')$ such that either $\Gamma'=\Lambda_{\mathrm{s}}$, or $\ddot\Gamma'\in\{\mathrm{C}_4,\mathrm{L}_3\}$.
		
		Assume first that $\Gamma'=\Lambda_{\mathrm{s}}$, with $\euV'=\{x,v_1,v_2\}$.
		By Proposition~\ref{prop:specialRAAG inclusion}, the subgroup of $G$ generated by $x$ is isomorphic to $\Z_{\ell}$, while by Lemma~\ref{prop:Lambda2}, the subgroup of $G$ generated by $\{v_1,v_2\}$ is a free pro-$\ell$ group.
		Hence, the subgroup of $G$ generated by $\euV'$ is 
		$$\left\langle\: x,v_1,v_2\:\mid\: {}^xv_i=v_i^{\lambda(1)},\:i=1,2\:\right\rangle\simeq G_{\Lambda_{\mathrm{s}},\lambda}.$$
		
		Assume now that $\ddot\Gamma'=\mathrm{C}_4$, with 
		$$\euV'=\{\:v_1,\:v_2,\:v_3,\:v_4\:\}\qquad\text{and}\qquad\ddot\euE'=\left\{\:\{v_1,v_2\},\:\ldots,\:\{v_4,v_1\}\:\right\}.$$
		By Lemma~\ref{prop:Lambda2}, the two subgroups $H_1,H_2$ of $G$ generated by $v_1,v_3$ and by $v_2,v_4$ respectively, are 2-generated free pro-$\ell$ groups.
		Now pick $y_1,y_3\in\kernel(\theta_{\Gamma,\lambda}\vert_{H_1})$ and $y_2,y_4\in\kernel(\theta_{\Gamma,\lambda}\vert_{H_2})$ such that the subgroup of $H_1$ generated by $y_1,y_3$ is not isomorphic to $\Z_\ell$, and analogously $y_2,y_4$ --- this is possible as both kernels are non-abelian free pro-$\ell$ groups.
		Then the subgroup of $G$ generated by $y_1,y_2,y_3,y_4$ is
		\[
		\left\langle\: y_1,y_2,y_3,y_4\:\mid\: [y_i,y_j]=1,\:i=1,3,\:j=2,4\:\right\rangle\simeq G_{{\mathrm{C}_4},\lambda'}
		\]
		with $\lambda'$ an arbitrary linear orientation.

		Finally, assume that $\ddot\Gamma'=\mathrm{L}_3$, with
		$$\euV'=\{\:v_1,\:v_2,\:v_3,\:v_4\:\}\qquad\text{and}\qquad\ddot\euE'=\left\{\:\{v_1,v_2\},\:\{v_2,v_3\},\:\{v_3,v_4\}\:\right\}.$$
		If $v_2\in\euV_\mathrm{s}$, then $(v_2,v_1),(v_2,v_3)\in\euE_\mathrm{s}$, as $\Gamma$ is specially oriented, and thus the induced subgraph of $\Gamma$ with vertices $v_1,v_2,v_3$ is $\Lambda_{\mathrm{s}}$: in this case $G$ contains a subgroup isomorphic to $G_{\Lambda_{\mathrm{s}},\lambda'}$ for some $\lambda'$ by the above argument.
		Analogously if $v_3\in\euV_\mathrm{s}$.
		Otherwise, suppose that $v_2,v_3\in\euV_\mathrm{o}$, so that $(v_2,v_3)\in\euE_\mathrm{o}$.
		By Lemma~\ref{prop:Lambda2}, the two subgroups $H_1,H_2$ of $G$ generated by $v_1,v_3$ and by $v_2,v_4$ respectively, are 2-generated free pro-$\ell$ groups.
		Put $y_1=[v_1,v_2]$ and $y_4=[v_3,v_4]$ --- observe that $y_1,y_4\in\kernel(\theta_{\Gamma,\lambda})$.
		Then the subgroup of $G$ generated by $y_1,v_2,v_3,y_4$ is
		\[
		\left\langle\: y_1,v_2,v_3,y_4\:\mid\: [y_1,v_2]=[v_2,v_3]=[v_3,y_4]=1\:\right\rangle\simeq G_{{\mathrm{L}_3},\lambda'}
		\]
		with $\lambda'$ an arbitrary linear orientation.
	\end{proof}

	
	\subsection{1-cyclotomic oriented pro-$\ell$ RAAGs}
	
	By Theorem~\ref{thm:cyc pRAAG}, an oriented pro-$\ell$ RAAG associated to an oriented graph $\Gamma$ may be 1-cyclotomic only if $\Gamma$ is specially oriented.
	We show that, in fact, 1-cyclotomicity is far more restrictive.
	
	\begin{prop}\label{prop:Lambdas nocyc}
		Let $\lambda$ be a linear orientation.
		Then the oriented pro-$\ell$ RAAG $(G_{\Lambda_{\mathrm{s}},\lambda},\theta_{\Lambda_{\mathrm{s}},\lambda})$ is not 1-cyclotomic.
	\end{prop}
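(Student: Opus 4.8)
The plan is to exhibit a single finitely generated subgroup of $G=G_{\Lambda_{\mathrm{s}},\lambda}$ whose restricted orientation fails to be Kummerian; since $1$-cyclotomicity of $(G,\theta_{\Lambda_{\mathrm{s}},\lambda})$ demands Kummerianity of \emph{every} subgroup, this suffices. Note that $(G,\theta_{\Lambda_{\mathrm{s}},\lambda})$ itself \emph{is} Kummerian by Theorem~\ref{thm:cyc pRAAG}, so the obstruction must be located in a proper subgroup. The economical choice is to reuse the subgroup $H=\langle\: u_1,y,t\:\rangle\subseteq G$ already isolated in the proof of Proposition~\ref{thm:lambdas}, together with its minimal presentation \eqref{eq:pres H BK}. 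Writing $\theta=\theta_{\Lambda_{\mathrm{s}},\lambda}$ and recalling $y=u_1^{-1}x$, $u_1=z_1^q$, $t=z_1z_2^{-1}$ with $\theta(x)=\lambda(1)=1+q$, one has $\theta\vert_H(y)=1+q$ and $\theta\vert_H(u_1)=\theta\vert_H(t)=1$; in particular $\theta\vert_H$ is torsion-free (here $4\mid q$ when $\ell=2$), so Proposition~\ref{prop:kummer} is available for $(H,\theta\vert_H)$.

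I would detect the failure through criterion~(ii) of Proposition~\ref{prop:kummer}, computing $\kernel(\theta\vert_H)/K_{\theta\vert_H}(H)$ directly from the presentation. Let $F$ be the free pro-$\ell$ group on $u_1,y,t$ and $\tilde\theta$ the pullback of $\theta\vert_H$; then $(F,\tilde\theta)$ is Kummerian by Example~\ref{exam:kummer groups}--(b). Its maximal locally uniform quotient $F/K_{\tilde\theta}(F)$ is $\langle\: u_1,t\:\rangle\rtimes\langle\: y\:\rangle$ with ${}^{y}u_1=u_1^{1+q}$, ${}^{y}t=t^{1+q}$ and $[u_1,t]=1$ (cf.\ Proposition~\ref{prop:locunif}), so $M:=\kernel(\tilde\theta)/K_{\tilde\theta}(F)$ is free abelian of rank $2$ on the images of $u_1,t$. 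Since $R=\langle\!\langle r_1,r_2\rangle\!\rangle\subseteq\kernel(\tilde\theta)$ and $K_{\theta\vert_H}(H)$ is the image of $K_{\tilde\theta}(F)R$, one obtains
\[
\kernel(\theta\vert_H)/K_{\theta\vert_H}(H)\;\cong\;M\big/\langle\,\overline{r_1},\overline{r_2}\,\rangle_{\Z_\ell},
\]
the $\Z_\ell$-span being enough because conjugation only rescales each $\overline{r_i}$ by the units $\theta(g)$.

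The crux — and the step I expect to require the most careful bookkeeping — is evaluating $\overline{r_1},\overline{r_2}$ in $M=\Z_\ell u_1\oplus\Z_\ell t$, where conjugation by $y$ acts as multiplication by $1+q$ (since ${}^{g}h\equiv h^{\theta(g)}$ modulo $K_{\tilde\theta}(F)$). The Kummer relator $r_1=[y,u_1]u_1^{-q}$ records precisely ${}^{y}u_1=u_1^{\theta(y)}$, whence $\overline{r_1}=0$. By contrast $r_2\in F_{(3)}$ is genuinely higher: using $[t^{-1},y]\equiv t^{q}$ and $u_2=[t^{-1},y]u_1^{-1}$, the three factors of $r_2$ acquire images $q^2\,t$, $q(q\,t-u_1)$ and $-q\,u_1$, so that $\overline{r_2}=2q\,(-u_1+q\,t)=2q\cdot w$ with $w=(-1,q)$ a \emph{primitive} vector of $\Z_\ell^2$. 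Only the divisibility of $\overline{r_2}$ by $q$ together with the primitivity of its cofactor matters: completing $w$ to a basis of $\Z_\ell^2$ yields
\[
\kernel(\theta\vert_H)/K_{\theta\vert_H}(H)\;\cong\;\Z_\ell\oplus\Z_\ell/2q\Z_\ell,
\]
which carries nontrivial torsion and is therefore not a free abelian pro-$\ell$ group. By Proposition~\ref{prop:kummer} the oriented pro-$\ell$ group $(H,\theta\vert_H)$ is not Kummerian, and consequently $(G_{\Lambda_{\mathrm{s}},\lambda},\theta_{\Lambda_{\mathrm{s}},\lambda})$ is not $1$-cyclotomic, as claimed.
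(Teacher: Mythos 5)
Your overall strategy --- exhibit one finitely generated subgroup of $G=G_{\Lambda_{\mathrm{s}},\lambda}$ whose restricted orientation fails to be Kummerian --- is exactly the paper's, and the paper likewise starts from the subgroup $H=\langle u_1,y,t\rangle$ of Proposition~\ref{thm:lambdas}. But your proposal fails at its central claim: the oriented pro-$\ell$ group $(H,\theta\vert_H)$ \emph{is} Kummerian, so no computation can legitimately produce torsion in $\kernel(\theta\vert_H)/K_{\theta\vert_H}(H)$. Indeed, write $\pi\colon G\to G/K_{\theta}(G)$ for the canonical projection; since $\Lambda_{\mathrm{s}}$ is specially oriented, $(G,\theta)$ is Kummerian and $\kernel(\theta)/K_{\theta}(G)=\Z_\ell\bar z_1\oplus\Z_\ell\bar z_2$ is free of rank $2$. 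Every generator ${}^gh\cdot h^{-\theta(g)}$ of $K_{\theta\vert_H}(H)$ (with $g\in H$, $h\in\kernel(\theta)\cap H$) is also a generator of $K_{\theta}(G)$, so $K_{\theta\vert_H}(H)\subseteq H\cap K_{\theta}(G)$ and $\kernel(\theta\vert_H)/K_{\theta\vert_H}(H)$ surjects onto $\pi(\kernel(\theta\vert_H))$, which contains $\pi(u_1)=q\bar z_1$ and $\pi(t)=\bar z_1-\bar z_2$ and is therefore free of rank $2$. On the other hand $\kernel(\theta\vert_H)$ is the normal closure of $\{u_1,t\}$ in $H$ (the quotient by it is procyclic and $\theta$ embeds it into $\Z_\ell^\times$), so $\kernel(\theta\vert_H)/K_{\theta\vert_H}(H)$ is generated by the classes of $u_1,t$, i.e.\ it is a $2$-generated abelian quotient of $\Z_\ell^2$ surjecting onto a free module of rank $2$; hence it is itself free of rank $2$, and $(H,\theta\vert_H)$ is Kummerian by Proposition~\ref{prop:kummer}.

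The same observation pinpoints where your arithmetic goes astray: every genuine relator of $H$ in the generators $u_1,y,t$ must have \emph{trivial} image in $M=\Z_\ell\bar u_1\oplus\Z_\ell\bar t$, so the nonzero value $2q(q\bar t-\bar u_1)$ you find for $\bar r_2$ cannot be correct. It comes from taking the printed formulas at face value: a direct computation in $G$ gives ${}^xt=z_1^{1+q}z_2^{-(1+q)}=u_1tu_2^{-1}$ (not $u_1tu_2$), whence $[t^{-1},y]=u_2^{-1}u_1$ and $u_2=u_1[t^{-1},y]^{-1}$ rather than $[t^{-1},y]u_1^{-1}$; with this correction the three factors of the relator contribute $q^2\bar t$, $\;q\bar u_1-q^2\bar t$ and $-q\bar u_1$, which sum to $0$, and your quotient is just $M\cong\Z_\ell^2$. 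So the obstruction is invisible at the level of $H$, and one must go deeper, as the paper does: it passes to the index-$\ell$ subgroup $V=\kernel(H\to\Z/\ell\Z)$, then to the quotient $V/N$ by the normal closure of $u_1$ (legitimate by Remark~\ref{rem:kummer quotient}), where a relation $[vN,tN]=(wN)^{\mu}$ with $0\neq\mu\in\ell\Z_\ell$ among minimal generators contradicts Kummerianity. As written, your argument does not establish the proposition.
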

	
	\begin{proof}
		We keep the same notation as in the proof of Proposition~\ref{thm:lambdas}.
		Recall that $G_{\Lambda_{\mathrm{s}},\lambda}$ has a subgroup $H$ with minimal presentation
		\[ H=\left\langle\:u_1,y,t\:\mid\:{}^yu_1=u_1^{1+q},\: \left[u_1y,[t^{-1},y]\right]=u_2^{-q}\left({}^{(u_2^{-1})}u_1\right)^q\:\right\rangle,\]
		where $u_1=z_1^q$, $y=u_1^{-1}x$, $t=z_1z_2^{-1}$, cf. \eqref{eq:pres H BK}.
		
		Let $\phi_V\colon H\to\Z/{\ell}\Z$ be the homomorphism defined by $\phi_V(y)=1$, $\phi(u_1)=\phi(t)=0$, and set $v=y^{\ell}$,  $w=[y,t^{-1}]$.
		Let $\phi_V\colon H\to\Z/{\ell}\Z$ be the homomorphism defined by $\phi_V(y)=1$, $\phi(u_1)=\phi(t)=0$.
		Then $V$ is a normal subgroup of $H$ of index $\ell$, and it is generated, as a normal subgroup, by $v,u_1,t^{-1}$.
		In fact, $V$ is generated as a pro-$\ell$ group by the set
		\[
		\left\{\:v,\:t^{-1},\:[y,t^{-1}]=w,\:\left[y,[y,t^{-1}]\right],\:\ldots,\:
		[\underbrace{y,[\ldots,[y}_{p-1\text{ times}},t^{-1}]\ldots]],\:u_1\:\right\}, \]
		as $H/V=\{V,yV,\ldots,y^{p-1}V\}$, and ${}^yu_1=u_1^{1+q}$.
		Observe that by \eqref{eq: u2 gen by}, $u_2$ lies in the subgroup of $V$ generated by $t,w,u_1$, and thus the relation
		\[ {}^{u_1}\left[y,[y,t^{-1}]\right][u_1,w] = \left[u_1y,[y,t^{-1}]\right]
		= u_1^{-q}\left({}^{u_1^{-1}}u_2\right)^q\in\langle\:t,w,u_1\:\rangle^q
		\]
		implies that every higher commutator $[y,[\ldots,[y,t^{-1}]\ldots]]$ --- and thus the whole pro-$\ell$ group $V$ --- is minimally generated by the set $\{v,t^{-1},w,u_1\}$.
		
		Set $\theta=\theta_{\Lambda_{\mathrm{s}},\lambda}\vert_V\colon V\to\Z_\ell^\times$.
		Since $\theta(v)=(1+q)^{\ell}$ and $\theta(u_1)=\theta(t)=\theta(w)=1$, $u_1$ lies in $\kernel(\theta)$.
		Now let $N$ be the normal subgroup of $V$ generated by $u_1$ as a normal subgroup.
		Then the map $N/N^\ell[N,V]\to V/\Phi(V)$ is injective, and hence Remark~\ref{rem:kummer quotient} implies that if the oriented pro-$\ell$ group $(V,\theta)$ is Kummerian, then also the oriented pro-$\ell$ group $(V/N,\theta_{/N})$ is Kummerian.
		We claim that the latter is not Kummerian.
		Indeed, $\{vN,tN,wN\}$ is a minimal generating set of $V/N$.
		Moreover, one has $u_2\equiv w\bmod N$, and thus 
		\[
		{}^yw={}^y[y,t^{-1}] \equiv w^{1+q}\mod N.	\]
		Therefore, one computes
		\[ \begin{split}
			\left[v,t^{-1}\right]=\left[y^{\ell},t^{-1}\right]&=
			{}^{y^{{\ell}-1}}[y,t^{-1}]\cdot{}^{y^{{\ell}-2}}[y,t^{-1}]\cdots{}^y[y,t^{-1}]\cdot [y,t^{-1}]\\ 
			&\equiv w^{(1+q)^{{\ell}-1}}\cdot w^{(1+q)^{{\ell}-2}}\cdots w^{1+q}\cdot w\mod N\\
			&\equiv w^{1+(1+q)+\ldots+(1+q)^{{\ell}-2}+(1+q)^{{\ell}-1}}\mod N.
		\end{split}\]
		This yields a relation $[vN,tN]=(wN)^\mu$, with $\mu\in {\ell}\Z_{\ell}$, $\mu\neq0$.
		Hence, by \cite[Thm.~8.1]{eq:kummer}, the oriented pro-$\ell$ group $(V/N,\theta_{/N})$ is not Kummerian.
		Thus, $(G,\theta_{\Lambda_{\mathrm{s}},\lambda})$ is not a 1-cyclotomic oriented pro-$\ell$ group.
	\end{proof}
	
	
	\subsection{Proof of Theorem~\ref{thm:main}}
	
	\begin{proof}[Proof of Theorem~\ref{thm:main}]
		Let $\Gamma=(\euV,\euE)$ be an oriented graph, let $\lambda\colon \Z_\ell\to\Z_\ell^\times$ be a linear orientation, and set $G=G_{\Gamma,\lambda}$ and $\theta=\theta_{\Gamma,\lambda}\colon G\to\Z_\ell$.
		
		The equivalence between (0) and (iv) is stated in Fact~\ref{prop:RAAGsET}.
		
		It is well-known that (iv) implies (i), as the realizability as a maximal pro-$\ell$ groups is closed with respect to free pro-$\ell$ products and semi-direct products with $\Z_\ell$ (cf. ...). 
		Moreover, (iv) implies also (ii) and (iii), cf. \cite[Thm.~1.4]{qw:cyclotomic} (see also Proposition~\ref{prop:subgps etc}).
		Finally, (iv) implies also (v), as stated in Fact~\ref{fact:subgroups raags et}.
		
		The Norm Residue Theorem (cf. \cite{HW:book}) yields the implications (i) $\Rightarrow$ (ii) (cf. \cite[\S~2]{cq:bk}) and (i) $\Rightarrow$ (iii) (cf. \cite[Thm.~1.1]{qw:cyclotomic}).
		
		Assume that $G$ is Bloch-Kato.
		Then $\Gamma$ is specially oriented by Proposition~\ref{thm:nospecial noBK}.
		Suppose that $G$ contains a subgroup $H$ isomorphic to $G_{\Gamma',\lambda}$ with either $\Gamma'\in\{\Lambda_{\mathrm{s}},\mathrm{C}_4,\mathrm{L}_3\}$.
		If $\Gamma'=\Lambda_\mathrm{s}$, then $H$ is not Bloch-Kato by Proposition~\ref{thm:lambdas}.
		If $\Gamma'=\mathrm{C}_4,\mathrm{L}_3$, then $H$ is not Bloch-Kato by \cite[Thm.~1.2]{SZ:RAAGs}.
		Therefore, $G$ contains no subgroups isomorphic to $G_{\Gamma',\lambda}$ for such an oriented graph $\Gamma'$, and $(G,\theta_{\Gamma,\lambda})$ is of elementary type by Proposition~\ref{prop:badgraphs badgroups}.
		This proves the implication (iii) $\Rightarrow$ (iv).
		
		Assume $(G,\theta)$ is 1-cyclotomic. 
		Then by Theorem~\ref{thm:cyc pRAAG}, $\Gamma$ is specially oriented, and $\theta=\theta_{\Gamma,\lambda}$.
		Suppose that $G$ contains a subgroup $H$ isomorphic to $G_{\Gamma',\lambda}$ with either $\Gamma'\in\{\Lambda_{\mathrm{s}}\mathrm{C}_4,\mathrm{L}_3\}$.
		If $\Gamma'=\Lambda_\mathrm{s}$, then $H$ can not be completed into a 1-cyclotomic oriented pro-$\ell$ group by Proposition~\ref{prop:Lambdas nocyc}.
		If $\Gamma'=\mathrm{C}_4,\mathrm{L}_3$, then $H$ can not be completed into a 1-cyclotomic oriented pro-$\ell$ group by \cite[Thm.~1.5]{SZ:RAAGs} and Theorem~\ref{thm:cyc pRAAG}.
		Therefore, $G$ contains no subgroups isomorphic to $G_{\Gamma',\lambda}$ for such an oriented graph $\Gamma'$, and $(G,\theta_{\Gamma,\lambda})$ is of elementary type by Proposition~\ref{prop:badgraphs badgroups}.
		This proves the implication (ii) $\Rightarrow$ (iv).
		
		Finally, assume that every finitely generated subgroup of $G$ is isomorphic to an oriented pro-$\ell$ RAAG.
		Then $\Gamma$ is specially oriented by Corollary~\ref{cor:nospecial nosubgroups}.
		Suppose that $G$ contains a subgroup $H$ isomorphic to $G_{\Gamma',\lambda}$ with either $\Gamma'\in\{\Lambda_{\mathrm{s}},\mathrm{C}_4,\mathrm{L}_3\}$.
		If $\Gamma'=\Lambda_\mathrm{s}$, then $H$ contains a subgroup which can not occur as an oriented pro-$\ell$ RAAG by Corollary~\ref{cor:lambdasRAAG}.
		If $\Gamma'=\mathrm{C}_4,\mathrm{L}_3$, then $H$ contains a subgroup which can not occur as an oriented pro-$\ell$ RAAG by \cite[Thm.~1.2]{SZ:RAAGs}.
		Therefore, $G$ contains no subgroups isomorphic to $G_{\Gamma',\lambda}$ for such an oriented graph $\Gamma'$, and $(G,\theta_{\Gamma,\lambda})$ is of elementary type by Proposition~\ref{prop:badgraphs badgroups}.
		This proves the implication (iv) $\Rightarrow$ (iv).
	\end{proof}

	\section{Chordal oriented graphs}\label{sec:chord}


	\subsection{Chordal graphs and patching of graphs}\label{ssec:chord}
	
	\begin{defin}\label{defi:chordalgraph}\rm
		A na\"ive graph $\ddot\Gamma=(\ddot\euV,\ddot\euE)$ is said to be {\sl chordal} (or {\sl triangulated}) if there are no induced subgraphs of $\Gamma$ which are circuits of length at least 4. 
		An oriented graph $\Gamma=(\euV,\euE)$ is said to be chordal if $\ddot\Gamma$ is a chordal na\"ive graph.
	\end{defin}
	
	Chordal graphs have the following characterization (cf. \cite[Prop.~5.5.1]{graphbook} and \cite[Thm.~3.2]{chordalgraphs}).
	
	\begin{prop}\label{prop:chordalgraphs}
		Let $\ddot\Gamma=(\ddot\euV,\ddot\euE)$ be a na\"ive graph.
		Then the following are equivalent.
		\begin{itemize}
			\item[(i)] The graph $\ddot\Gamma$ is chordal.
			\item[(ii)] The graph $\ddot\Gamma$ decomposes as patching of two induced proper subgraphs
			$\ddot\Gamma_1,\ddot\Gamma_2$ which are chordal, along a common clique $\ddot\Delta\subseteq\ddot\Gamma_1,\ddot\Gamma_2$.
			\item[(iii)] The graph $\Upsilon(\ddot\Gamma)$ has a maximal subtree $\mathrm{T}_{\Upsilon(\ddot\Gamma)}$ with the clique-intersection property.
		\end{itemize}
	\end{prop}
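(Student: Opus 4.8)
The plan is to establish the cycle of implications (i) $\Rightarrow$ (ii) $\Rightarrow$ (iii) $\Rightarrow$ (i), since each single arrow rests on one well-isolated classical fact about chordal graphs. As all three properties depend only on the na\"ive graph $\ddot\Gamma$, one argues entirely within the category of na\"ive graphs; and because an induced subgraph of a chordal graph is again chordal, one may assume $\ddot\Gamma$ connected, splitting off connected components by an empty-clique patching in the disconnected case. The two engines I would invoke are \emph{Dirac's separator lemma} (in a chordal graph every minimal vertex separator induces a clique) for the step producing the decomposition, and the \emph{subtree representation} of chordal graphs (a na\"ive graph is chordal if and only if it is the intersection graph of a family of subtrees of a tree, via the Helly property for subtrees of a tree) for the step connecting chordality with the clique-intersection property.

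For (i) $\Rightarrow$ (ii), assuming $\ddot\Gamma$ chordal, connected and not complete, I would pick two non-adjacent vertices $u,v$ and a minimal $u$–$v$ separator $S$; by Dirac's lemma $S$ induces a clique $\ddot\Delta$. Letting $C$ be the connected component of $\ddot\Gamma\smallsetminus S$ containing $u$, I set $\ddot\Gamma_1$ to be the subgraph of $\ddot\Gamma$ induced on $C\cup S$ and $\ddot\Gamma_2$ the subgraph induced on $\ddot\euV\smallsetminus C$. Both are induced proper subgraphs, both chordal, their vertex intersection is exactly $S$, and since $S$ separates there is no edge of $\ddot\Gamma$ joining $C$ to $\ddot\euV\smallsetminus(C\cup S)$; hence $\ddot\Gamma$ is the patching of $\ddot\Gamma_1$ and $\ddot\Gamma_2$ along the clique $\ddot\Delta$. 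The complete (and single-vertex) cases serve as the base of the recursion and require no splitting.

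For (ii) $\Rightarrow$ (iii) I would induct on $|\ddot\euV|$ using the decomposition along $\ddot\Delta$. The key observation is that, since there is no edge between $\ddot\Gamma_1\smallsetminus\ddot\Delta$ and $\ddot\Gamma_2\smallsetminus\ddot\Delta$, every clique of $\ddot\Gamma$ lies entirely in $\ddot\Gamma_1$ or in $\ddot\Gamma_2$; consequently $\mathrm{\bf mx}(\ddot\Gamma)$ is obtained from $\mathrm{\bf mx}(\ddot\Gamma_1)\cup\mathrm{\bf mx}(\ddot\Gamma_2)$ by discarding those maximal cliques of one side that are absorbed into a larger maximal clique of the other. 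Taking maximal subtrees $\mathrm{T}_1,\mathrm{T}_2$ with the clique-intersection property on the two smaller factors, which exist by induction, I would glue them by adding a single edge between a maximal clique of $\ddot\Gamma_1$ containing $\ddot\Delta$ and one of $\ddot\Gamma_2$ containing $\ddot\Delta$. Because $\ddot\Delta$ is common to both factors and every crossing of the separator sits inside $\ddot\Delta$, the running-intersection condition along each path is inherited from $\mathrm{T}_1,\mathrm{T}_2$ together with the fact that $\ddot\Delta$ lies in both endpoints of the new edge.

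Finally, for (iii) $\Rightarrow$ (i): a maximal subtree $\mathrm{T}_{\Upsilon(\ddot\Gamma)}$ with the clique-intersection property says precisely that for every vertex $w\in\ddot\euV$ the set $\{\Xi\in\mathrm{\bf mx}(\ddot\Gamma)\mid w\in\euV(\Xi)\}$ spans a subtree of $\mathrm{T}_{\Upsilon(\ddot\Gamma)}$ (if $w$ lies in two maximal cliques $\Xi,\Xi'$ then $w\in\euV(\Xi)\cap\euV(\Xi')\subseteq\euV(\Xi_i)$ for every clique $\Xi_i$ on the connecting path). This realises $\ddot\Gamma$ as the intersection graph of these subtrees, whence $\ddot\Gamma$ is chordal: an induced circuit $w_1\cdots w_k$ with $k\ge4$ would yield subtrees with consecutive ones meeting and non-consecutive ones disjoint, which is impossible in a tree by the Helly property, as some non-consecutive pair must then meet and produce a chord. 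I expect the main obstacle to be the bookkeeping in the inductive step (ii) $\Rightarrow$ (iii): one must track exactly which maximal cliques of the two factors survive as maximal cliques of $\ddot\Gamma$ and verify that the clique-intersection property is not broken for pairs whose connecting path crosses the new gluing edge — checking that every such path passes only through cliques containing $\ddot\Delta$ is where the separator structure must be used most carefully.
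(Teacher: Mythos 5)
The paper does not actually prove this proposition: it is quoted from the literature, with pointers to Diestel and to Blair--Peyton, and your three implications are essentially a reconstruction of the proofs found in those references --- Dirac's minimal-separator lemma for (i)$\Rightarrow$(ii), the clique-tree gluing for (ii)$\Rightarrow$(iii), and the subtree-intersection representation for (iii)$\Rightarrow$(i). So your route is the standard one, and the overall architecture is sound; you are supplying the proof the paper delegates to its citations. Three points deserve more care than your sketch gives them. First, statement (ii) as literally printed fails for complete graphs: if $\ddot\Gamma_1,\ddot\Gamma_2$ are \emph{proper} induced subgraphs whose vertex sets cover $\ddot\euV$, one can pick $u$ in the first but not the second and $w$ in the second but not the first, and the patching never contains the edge $\{u,w\}$; so (ii) must be read recursively (``constructible by pasting along cliques, starting from complete graphs''), which is how you in fact use it --- worth saying explicitly. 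Second, in (iii)$\Rightarrow$(i) the Helly property is not the right tool: the subtrees $T_{w_1},\dots,T_{w_k}$ attached to an induced $k$-cycle are \emph{not} pairwise intersecting (non-consecutive ones are disjoint precisely because there is no chord), so Helly gives no contradiction by itself; the standard argument picks a node $t_i\in T_{w_i}\cap T_{w_{i+1}}$ for each $i$ and uses the fact that the resulting closed walk in the tree must backtrack, which forces two non-consecutive subtrees to meet and hence produces a chord. Third, in the gluing step of (ii)$\Rightarrow$(iii) the delicate case is when $\ddot\Delta$ is itself a maximal clique of one factor but absorbed into a larger maximal clique of the other: then that vertex of the smaller clique-tree must be identified with (or replaced by) a maximal clique of the other factor containing it, not joined to it by a new edge, and one must recheck the running-intersection condition across the replacement; you flag this bookkeeping yourself, and it does go through because the only maximal clique of a factor that can fail to be maximal in $\ddot\Gamma$ is $\ddot\Delta$ itself. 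None of these is a fatal gap, but each needs to be written out for the argument to be complete.
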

	
	\begin{exam}\label{exam:chord1}\rm
		Consider the na\"ive graph $\ddot\Gamma=(\ddot\euV,\ddot\euE)$ with geometric realization
		\[ 
		\xymatrix@R=1.5pt{  &&& v_1 &&& \\ &&& \bullet\ar@{-}[ddddlll]\ar@{-}[ldddd]\ar@{-}[rdddd]\ar@{-}[rrrdddd] & && \\
			\\ \\ &&&\ddot\Delta&&&  \\ 
			\bullet\ar@{-}[rr] && \bullet\ar@{-}[rr] && \bullet\ar@{-}[rr] && \bullet
			\\  v_2 && v_3 && v_4 && v_5}
		\]
		Then $\ddot\Gamma$ is chordal, and it is the pasting of the two induced subgraphs $\ddot\Gamma_1=(\ddot\euV_1,\ddot\euE_1)$ and $\ddot\Gamma_2=(\ddot\euV_2,\ddot\euE_2)$, with $\ddot\euV_1=\ddot\euV\smallsetminus\{v_5\}$ and $\ddot\euV_2=\euV\smallsetminus\{v_2\}$, along the common subgraph $\ddot\Delta$, which is the triangle with vertices $v_1,v_3,v_4$.
		Moreover, if $\ddot\Delta'$ and $\ddot\Delta''$ are the triangles with vertices $v_1,v_2,v_3$ and $v_1,v_4,v_5$ respectively, then $\ddot\Gamma_1$ may be obtained as the pasting of $\ddot\Delta'$ and $\ddot\Delta$ along the common edge with vertices $v_1,v_3$, and analogously $\ddot\Gamma_2$ is the pasting of $\ddot\Delta$ and $\ddot\Delta''$ along the common edge with vertices $v_1,v_4$.
	\end{exam}
	
	\subsection{Chordal oriented graphs and oriented pro-$\ell$ RAAGs}
	
	Let $\Gamma=(\euV,\euE)$ be a specially oriented graph, and let $\lambda$ be a linear orientation.
	Recall that by Proposition~\ref{prop:specialRAAG inclusion}, if $\Delta=(\euV(\Delta),\euE(\Delta))$ is a clique of $\Gamma$, then the inclusion $\euV(\Delta)\hookrightarrow\euV$ induces a monomorphism of pro-$\ell$ groups $G_{\Delta,\lambda}\to G_{\Gamma,\lambda}$.
	Hence, if $\Gamma$ is chordal, then one may find two proper induced subgraphs $\Gamma_1,\Gamma_2$ of $\Gamma$, whose intersection is a clique $\Delta$, such that $\Gamma$ is the patching of $\Gamma_1,\Gamma_2$ along $\Delta$, so that 
	\begin{equation}\label{eq:amalg 1}
		(G_{\Gamma,\lambda},\theta_{\Gamma,\lambda})\simeq 
		(G_{\Gamma_1,\lambda},\theta_{\Gamma_1,\lambda})\amalg_{G_{\Delta,\lambda}}^{\hat\ell}(G_{\Gamma_2,\lambda},\theta_{\Gamma_2,\lambda})
	\end{equation}
	(see \ref{rem:RAAG freeprod}--(ii)).
	Moreover, by \cite[Prop.~5.22]{qsv:quadratic} this amalgamated free pro-$\ell$ product is {\sl proper} --- i.e., the two factors are subgroups of the amalgamated free pro-$\ell$ product ---, as $G_{\Delta,\lambda}$ is a locally uniform pro-$\ell$ group.
	Therefore, an oriented pro-$\ell$ RAAG associated to a chordal specially oriented graph may be constructed iterating amalgamated free pro-$\ell$ products over locally uniform subgroups, starting from oriented pro-$\ell$ RAAGs associated to complete specially oriented graphs.
	
	In particular, let $\mathrm{T}=\mathrm{T}_{\Upsilon(\Gamma)}$ be a maximal subtree of the clique-graph $\Upsilon(\Gamma)$ with the clique-intersection property.
	Then one has a decomposition as proper amalgamated free pro-$\ell$ product
	\[
	G_{\Gamma,\lambda}\simeq
	\dfrac{\coprod_{\Delta\in\mathrm{\bf mx}(\Gamma)}^{\hat {\ell}}G_{\Delta,\lambda}}{N},
	\]
	where $N$ is the normal subgroup of the free pro-$\ell$ product $\coprod_{\Delta}G_{\Delta,\lambda}$ generated by the elements
	$$\iota_{\Xi,\Delta}(v)\cdot\iota_{\Xi,\Delta'}(v)^{-1},\qquad  
	v\in\euV(\Delta),\:\Xi=\Delta\cap\Delta',\: (\Delta,\Delta')\in\euE(\mathrm{T}),$$
	and $\iota_{\Xi,\Delta}\colon G_{\Xi,\lambda}\to G_{\Delta,\lambda}$ is the monomorphism of locally uniform pro-$\ell$ groups induced by $\euV(\Xi)\hookrightarrow\euV(\Delta)$ (cf. \cite{simoblu}).
	
	\begin{exam}\rm
		Let $\lambda$ be a linear orientation, and let
		$\Gamma=(\euV,\euE)$ be a specially oriented graph with associated na\"ive graph $\ddot\Gamma$ as the chordal graph in Example~\ref{exam:chord1}. 
		Moreover, let $\Delta$ be the clique of $\Gamma$ with vertices $v_1,v_3,v_4$, and analogously $\Delta'$ and $\Delta''$.
		Then the clique-graph $\Upsilon(\Gamma)$ has geometric realization
		\[\xymatrix@R=1.5pt{  \Delta' & \Delta & \Delta'' \\ \bullet\ar@{-}[r]_-{\Xi_1} & \bullet\ar@{-}[r]_-{\Xi_2} &\bullet
		}
		\]
		where $\Xi_1=\Delta'\cap\Delta$ and $\Xi_2=\Delta\cap\Delta''$ are the 2-cliques with vertices $v_1,v_3$ and $v_1,v_4$ respectively.
		Therefore,
		\begin{equation}\label{eq:amalg simone}
			G_{\Gamma,\lambda}\simeq \left(G_{\Delta'}\amalg_{G_{\Xi_1,\lambda}}^{\hat\ell}G_{\Delta,\lambda}\right)
			\amalg_{G_{\Xi_2,\lambda}}^{\hat\ell}G_{\Delta'',\lambda},
		\end{equation}
		Observe that $G_{\Delta',\lambda},G_{\Delta,\lambda},G_{\Delta'',\lambda}$ are 3-generated locally uniform pro-$\ell$ groups, while $G_{\Xi_1,\lambda},G_{\Xi_2,\lambda}$ are 2-generated locally uniform pro-$\ell$ groups.
	\end{exam}
	
	\subsection{Chordal oriented graphs and cohomology}
	
	In \cite[Thm.~H]{qsv:quadratic} it is shown that a generalized pro-$\ell$ RAAG associated to an oriented graph $\Gamma$ satisfying a particular group-theoretic condition has $\F_\ell$-cohomology isomorphic to the exterior Stanley-Reisner algebra $\bfLam_\bullet(\ddot\Gamma^{\op})$.
	Thus, the following theorem is a refinement of the aforementioned result.
	
	\begin{thm}\label{thm:chordal cohomology}
		Let $\Gamma=(\euV,\euE)$ be an oriented graph, and let $\lambda$ be a linear orientation.
		If $\Gamma$ is chordal and specially oriented, then 
		\[
		\bfH^\bullet(G_{\Gamma,\lambda},\F_\ell)\simeq\bfLam_\bullet(\ddot\Gamma^{\op}).
		\] 
	\end{thm}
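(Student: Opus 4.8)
The plan is to argue by induction on the number of vertices $|\euV|$, exploiting the decomposition of $G_{\Gamma,\lambda}$ as a proper amalgamated free pro-$\ell$ product over a locally uniform subgroup, combined with a Mayer--Vietoris argument, and to match the resulting cohomology with a parallel combinatorial short exact sequence for exterior Stanley--Reisner algebras. Throughout I write $\eta_\Gamma\colon\bfLam_\bullet(\ddot\Gamma^{\op})\to\bfH^\bullet(G_{\Gamma,\lambda},\F_\ell)$ for the natural graded algebra homomorphism \eqref{eq:H raag}, which is an isomorphism in degrees $\le 2$ and is functorial with respect to inclusions of induced subgraphs (on the group side via the subgroup inclusions furnished by the amalgam structure, on the cohomology side via restriction). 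For the base case, when $\Gamma$ is complete the group $G_{\Gamma,\lambda}$ is locally uniform by Corollary~\ref{cor:complete RAAG cyc}, hence finitely generated uniform, so by Lazard's theorem (Remark~\ref{rem:locunif cohom}) its $\F_\ell$-cohomology is the exterior algebra on $\rmH^1$; since $\ddot\Gamma$ is complete no quadratic relation is imposed and $\bfLam_\bullet(\ddot\Gamma^{\op})$ is the full exterior algebra, so $\eta_\Gamma$ is an isomorphism.

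For the inductive step, suppose $\Gamma$ is chordal, specially oriented and not complete. By Proposition~\ref{prop:chordalgraphs} the na\"ive graph $\ddot\Gamma$ decomposes as a patching of two proper induced subgraphs $\Gamma_1,\Gamma_2$, both chordal, along a common clique $\Delta$ (in the disconnected case $\Delta=\varnothing$, giving a disjoint union). Being induced subgraphs of a specially oriented graph, $\Gamma_1,\Gamma_2,\Delta$ are again specially oriented, and all have strictly fewer vertices than $\Gamma$. Correspondingly \eqref{eq:amalg 1} together with Fact~\ref{rem:RAAG freeprod} yield
\[
G_{\Gamma,\lambda}\simeq G_{\Gamma_1,\lambda}\amalg_{G_{\Delta,\lambda}}^{\hat\ell}G_{\Gamma_2,\lambda},
\]
which is a \emph{proper} amalgamated free pro-$\ell$ product because $G_{\Delta,\lambda}$ is locally uniform (Corollary~\ref{cor:complete RAAG cyc}); hence the associated Mayer--Vietoris long exact sequence in $\F_\ell$-cohomology is available. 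The key observation is that the restriction maps $\bfH^n(G_{\Gamma_i,\lambda},\F_\ell)\to\bfH^n(G_{\Delta,\lambda},\F_\ell)$ are surjective in every degree: the target is an exterior algebra, hence generated in degree $1$, the restriction is a graded algebra map, and in degree $1$ it is, via \eqref{eq:H1 RAAG}, dual to the vertex inclusion $\euV(\Delta)\hookrightarrow\euV(\Gamma_i)$, hence surjective. Together with the vanishing of the degree-$0$ connecting map (the difference map $\F_\ell^2\to\F_\ell$ on $\bfH^0$ is surjective), the long exact sequence breaks, for every $n\ge1$, into
\[
0\to\bfH^n(G_{\Gamma,\lambda},\F_\ell)\to\bfH^n(G_{\Gamma_1,\lambda},\F_\ell)\oplus\bfH^n(G_{\Gamma_2,\lambda},\F_\ell)\to\bfH^n(G_{\Delta,\lambda},\F_\ell)\to0.
\]

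On the combinatorial side, the monomials indexed by $n$-cliques form a basis of $\bfLam_n(\ddot\Gamma^{\op})$. Since every clique of the patching lies entirely in $\Gamma_1$ or entirely in $\Gamma_2$, while the cliques common to both are exactly the cliques of $\Delta$, an inclusion--exclusion count and the projection maps give, for every $n\ge1$, a short exact sequence
\[
0\to\bfLam_n(\ddot\Gamma^{\op})\to\bfLam_n(\ddot\Gamma_1^{\op})\oplus\bfLam_n(\ddot\Gamma_2^{\op})\to\bfLam_n(\ddot\Delta^{\op})\to0.
\]
By functoriality of $\eta$ these two sequences fit into a commutative ladder whose vertical maps are $\eta_\Gamma$, $\eta_{\Gamma_1}\oplus\eta_{\Gamma_2}$ and $\eta_\Delta$. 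The middle and right maps are isomorphisms by the inductive hypothesis (and the base case for the complete graph $\Delta$), so the short five lemma forces $\eta_\Gamma$ to be an isomorphism in every degree $n\ge1$; in degree $0$ it is trivially so, completing the induction.

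The main obstacle is the splitting of the Mayer--Vietoris sequence into the short exact sequences above. This requires, first, that the Mayer--Vietoris sequence be available for \emph{proper} amalgamated free pro-$\ell$ products, which is precisely why the local uniformity of $G_{\Delta,\lambda}$ (guaranteeing properness) is essential; and second, the surjectivity of restriction onto $\bfH^\bullet(G_{\Delta,\lambda},\F_\ell)$. The surjectivity is where the hypotheses enter decisively: it relies on $G_{\Delta,\lambda}$ being locally uniform, so that its cohomology is an exterior algebra generated in degree $1$, thereby reducing surjectivity in all degrees to the transparent degree-$1$ statement.
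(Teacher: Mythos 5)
Your proof is correct, but the key step is genuinely different from the paper's. Both arguments share the same skeleton: induction along the chordal decomposition of Proposition~\ref{prop:chordalgraphs}, the complete base case settled by local uniformity and Lazard's theorem (Corollary~\ref{cor:complete RAAG cyc} and Remark~\ref{rem:locunif cohom}), and the properness of the amalgam \eqref{eq:amalg 1}. From there the paper does not compute the higher cohomology at all: since \eqref{eq:H raag} is already an isomorphism in degrees $\leq 2$, it suffices to show that $\bfH^\bullet(G_{\Gamma,\lambda},\F_\ell)$ is \emph{quadratic}, and this is extracted from the degree $1,2$ decomposition of $\rmH^n(G_{\Gamma,\lambda},\F_\ell)$ into $\rmH^n(G_{\Delta,\lambda},\F_\ell)$ and the kernels of the two restrictions by invoking \cite[Thm.~B]{qsv:quadratic}. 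You instead push the Mayer--Vietoris sequence of the proper amalgam through all degrees, split it using the surjectivity of restriction onto the exterior algebra $\bfH^\bullet(G_{\Delta,\lambda},\F_\ell)$ --- a clean observation: a graded algebra map to an algebra generated in degree $1$ which is surjective in degree $1$ is surjective everywhere --- and compare with the clique-counting short exact sequence of Stanley--Reisner algebras via the five lemma. This is more self-contained, trading the quadraticity machinery of \cite[Thm.~B, Thm.~E]{qsv:quadratic} for the classical Mayer--Vietoris sequence of proper pro-$\ell$ amalgams \cite{ribes:amalg}, at the price of needing that sequence in every degree rather than only up to degree $2$. The one place where you are too quick is properness: you assert it follows from the local uniformity of $G_{\Delta,\lambda}$ alone (the paper makes the same assertion in the discussion around \eqref{eq:amalg 1}, citing \cite[Prop.~5.22]{qsv:quadratic}), whereas the paper's proof of the theorem establishes it via the Frattini-series compatibility of Lemma~\ref{lem:amalg} together with \cite[Thm.~9.2.4]{ribzal:book}; since your entire Mayer--Vietoris argument hinges on properness, you should either cite that result explicitly or reproduce the Frattini-series argument.
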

	
	In order to prove Theorem~\ref{thm:chordal cohomology}, we need a technical lemma, which is a slight modification of \cite[Prop.~5.21]{qsv:quadratic}.
	Given a pro-$\ell$ group $G$, for $n\geq 1$ let $\Phi^n(G)$ denote the $n$-th term of the Frattini series of $G$ --- namely, $\Phi^1(G)=G$ and $\Phi^{n+1}(G)=\Phi(\Phi^n(G))$.
	If $G$ is locally uniform, then $\Phi^n(G)=G^{\ell^n}$ (cf. \cite[Thm.~3.6]{ddsms}).
	
	Recall that, given a specially oriented graph $\Gamma=(\euV,\euE)$ containing a clique $\Delta=(\euV(\Delta),\euE(\Delta))$, the inclusion $\euV(\Delta)\hookrightarrow\euV$ induces a monomorphism of pro-$\ell$ groups $\phi_\Delta\colon G_{\Delta,\lambda}\to G_{\Gamma,\lambda}$ (cf. Proposition~\ref{prop:specialRAAG inclusion}), with $\lambda$ a linear orientation.
	
	\begin{lem}\label{lem:amalg}
		Let $\Gamma=(\euV,\euE)$ be a specially oriented graph, and let $\lambda$ be a linear orientation.
		If $\Delta=(\euV(\Delta),\euE(\Delta))$ is a clique of $\Gamma$, then 
		\[
		\Phi^n(G_{\Delta,\lambda})=G_{\Delta,\lambda}\cap\Phi^n(G_{\Gamma,\lambda})
		\]
		for all $n\geq1$ (where we consider $G_{\Delta,\lambda}$ as a subgroup of $G_{\Gamma,\lambda}$ via the monomorphism $\phi_\Delta$).
	\end{lem}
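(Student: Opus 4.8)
The plan is to reduce the asserted equality to an injectivity statement on the Frattini-graded layers, and then to detect those layers inside a single locally uniform quotient of $G_{\Gamma,\lambda}$. Write $G=G_{\Gamma,\lambda}$, $G_\Delta=G_{\Delta,\lambda}$, and $\theta=\theta_{\Gamma,\lambda}$, and view $G_\Delta$ inside $G$ via the monomorphism $\phi_\Delta$ of Proposition~\ref{prop:specialRAAG inclusion}. For a pro-$\ell$ group $H$ set $L_n(H)=\Phi^n(H)/\Phi^{n+1}(H)$. Since $\Delta$ is complete and specially oriented, $G_\Delta$ is locally uniform by Corollary~\ref{cor:complete RAAG cyc}, hence finitely generated uniform; thus $\Phi^n(G_\Delta)=G_\Delta^{\ell^n}$ and $\bigcap_n\Phi^n(G_\Delta)=\{1\}$ by \cite[Thm.~3.6]{ddsms}, and each $L_n(G_\Delta)$ is an $\F_\ell$-vector space having $\{\,v^{\ell^n}\Phi^{n+1}(G_\Delta)\mid v\in\euV(\Delta)\,\}$ as a basis, the $\ell^n$-th power map inducing an isomorphism $G_\Delta/\Phi(G_\Delta)\xrightarrow{\ \sim\ }L_n(G_\Delta)$. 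The inclusion $\Phi^n(G_\Delta)\subseteq G_\Delta\cap\Phi^n(G)$ is immediate, as any homomorphism carries $\Phi^n$ into $\Phi^n$.

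For the reverse inclusion I would first establish that the map $L_n(G_\Delta)\to L_n(G)$ induced by $\phi_\Delta$ is injective for every $n\ge1$; granting this, the equality follows by a short filtration argument that needs no induction. Indeed, let $g\in G_\Delta\cap\Phi^n(G)$ with $g\neq1$, and let $m$ be maximal with $g\in\Phi^m(G_\Delta)$, which is finite since $\bigcap_m\Phi^m(G_\Delta)=\{1\}$. If $m<n$, then $\phi_\Delta(g)\in\Phi^n(G)\subseteq\Phi^{m+1}(G)$, so the nonzero class of $g$ in $L_m(G_\Delta)$ would die in $L_m(G)$, contradicting the graded injectivity; hence $m\ge n$ and $g\in\Phi^n(G_\Delta)$, as desired.

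The heart of the matter is therefore the graded injectivity, and here the hypothesis that $\Gamma$ be specially oriented enters decisively. By Theorem~\ref{thm:cyc pRAAG} the oriented pro-$\ell$ group $(G,\theta)$ is Kummerian, so Proposition~\ref{prop:kummer}(iii) furnishes a locally uniform quotient $\bar G=G/K_\theta(G)$ with $K_\theta(G)\subseteq\Phi(G)$. All relators in the presentation \eqref{def:orRAAG} lie in $\Phi(F)=F^\ell[F,F]$ (each is a commutator times a power $v^{1-\lambda(1)}$ with $1-\lambda(1)\in\ell\Z_\ell$), so $\euV$ is a minimal generating set of $G$, and since $K_\theta(G)\subseteq\Phi(G)$ it remains one of $\bar G$. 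Hence $\bar G$ is uniform of dimension $|\euV|$, and as above the power classes $\{\,\bar v^{\ell^n}\Phi^{n+1}(\bar G)\mid v\in\euV\,\}$ form a basis of $L_n(\bar G)$; in particular they are $\F_\ell$-linearly independent. The composite $L_n(G_\Delta)\to L_n(G)\to L_n(\bar G)$ sends the basis element $v^{\ell^n}\Phi^{n+1}(G_\Delta)$ (for $v\in\euV(\Delta)\subseteq\euV$) to $\bar v^{\ell^n}\Phi^{n+1}(\bar G)$, and since these images are linearly independent while the source classes form a basis of $L_n(G_\Delta)$, the first arrow $L_n(G_\Delta)\to L_n(G)$ is already injective. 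This yields the graded injectivity for all $n$ at once.

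The step I expect to demand the most care is precisely the claim that the classes indexed by $\euV$ form a basis of $L_n(\bar G)$: one must check that passing to the Kummerian quotient loses neither generators (guaranteed by $K_\theta(G)\subseteq\Phi(G)$) nor the independence of the generator-powers in each graded layer (guaranteed by the uniformity of $\bar G$ together with $\Phi^n(\bar G)=\bar G^{\ell^n}$). Everything else is formal bookkeeping with the Frattini filtration. I would emphasize that this route is uniform in $\Gamma$ and invokes chordality nowhere — it uses only that $\Gamma$ is specially oriented, which is exactly what makes $G$ Kummerian and hence produces the locally uniform quotient $\bar G$ in which the graded layers can be detected.
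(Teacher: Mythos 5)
Your proof is correct and follows essentially the same route as the paper's: both arguments detect membership in $\Phi^n$ by passing to the locally uniform Kummerian quotient $\bar G=G/K_{\theta_{\Gamma,\lambda}}(G)$ furnished by Theorem~\ref{thm:cyc pRAAG} and Proposition~\ref{prop:kummer}, using that $\varphi\vert_{G_{\Delta,\lambda}}$ is injective and that $\euV(\Delta)$ is part of a minimal generating set of $\bar G$. The only difference is organizational: the paper works directly at the subgroup level via the identity $\varphi(G_{\Delta,\lambda})\cap\bar G^{\ell^n}=\varphi(G_{\Delta,\lambda})^{\ell^n}$, whereas you prove the equivalent statement layer by layer on the Frattini-graded pieces, which amounts to the same uniform-group fact from \cite[Thm.~3.6]{ddsms}.
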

	
	\begin{proof}
		Set $G=G_{\Gamma,\lambda}$, $A=G_{\Delta,\lambda}$, and $\bar G=G/K_{\theta_{\Gamma,\lambda}}(G)$, and let $\varphi\colon G\to\bar G$ denote the canonical projection.
		Recall that $\varphi\vert_A$ is injective.
		Clearly, one has the inclusion $$\Phi^n(A)\subseteq A\cap\Phi^n(G).$$
		
		Let $F$ be the free pro-$\ell$ group generated by $\euV'$, where $\euV=\euV'\sqcup\euV(\Delta)$, and put
		$\tilde G=F\amalg^{\hat\ell} A$.
		Let $\pi\colon \tilde G\to G$ be the epimorphism which sends every vertex $v\in\euV'$, considered as an element of $F$, to the same vertex, considered as an element of $G$, and such that $\pi\vert_A=\phi_\Delta$.
		Altogether, one has a chain of epimorphisms of pro-$\ell$ groups
		\[
		\xymatrix{ \tilde G=F\amalg^{\hat\ell}A\ar[r]^-{\pi} & G\ar[r]^-{\varphi} & \bar G }.
		\]
		Pick an element $x\in A\cap\Phi^n(G)$.
		Then 
		\[
		\varphi(x)\in \varphi(A)\cap\Phi^n(\bar G)=\varphi(A)\cap \bar G^{\ell^n}=\varphi(A)^{\ell^n},
		\]
		as $\bar G$ is locally uniform, generated by $\varphi(\euV')\sqcup\varphi(\euV(\Delta))$.
		Since $\varphi\circ\phi_\Delta\colon A\to\varphi(A)$ is an isomorphism, there exists $y\in A^{\ell^n}$ such that $\varphi(\phi_\Delta(y))=\varphi(x)$, and hence $x=\phi_\Delta(y)$, namely, $x\in A^{\ell^n}$.
	\end{proof}

	\begin{proof}[Proof of Theorem~\ref{thm:chordal cohomology}]
		By \eqref{eq:H raag} one knows that $\rmH^n(G_{\Gamma,\lambda},\F_\ell)\simeq\Lambda_n(\ddot\Gamma^{\op})$ for $n=0,1,2$.
		Therefore, it sufficies to show that $\bfH^\bullet(G_{\Gamma,\lambda},\F_\ell)$ is a quadratic algebra.
		
		Let $\Gamma_1,\Gamma_2$ be proper induced subgraphs of $\Gamma$ whose intersection is a clique $\Delta$.
		We claim that the free amalgamated pro-$\ell$ product \eqref{eq:amalg 1} is proper.
		Indeed, for every $n\geq1$ set $U_n=\Phi^n(G_{\Gamma_1,\lambda})$ and $V_n=\Phi^n(G_{\Gamma_2,\lambda})$.
		Then $\{U_n\mid n\geq1\}$ and $\{V_n\mid n\geq1\}$ are basis of open neighbourhoods of 1 in $G_{\Gamma_1,\lambda}$ and $G_{\Gamma_2,\lambda}$ respectively.
		By Lemma~\ref{lem:amalg},
		$$U_n\cap G_{\Delta,\lambda}=V_n\cap G_{\Delta,\lambda}=G_{\Delta,\lambda}^{\ell^n}\qquad\text{for every }n\geq1,$$ and \cite[Thm.~9.2.4]{ribzal:book} implies that the amalgam is proper.
		
		Now, if $\Gamma$ is complete, then $G_{\Gamma,\lambda}$ is locally uniform, and thus $\bfH^\bullet(G_{\Gamma,\lambda},\F_\ell)\simeq\bfLam_\bullet(\ddot\Gamma^{\op})$ by Remark~\ref{rem:locunif cohom}.
		Otherwise, by the inductive procedure to construct chordal graphs we may assume that $\bfH^\bullet(G_{\Gamma_i,\lambda},\F_\ell)\simeq\bfLam_\bullet(\ddot\Gamma_i^{\op})$ for both $i=1,2$.
		Moreover, one has 
		\[
		\rmH^n(G_{\Gamma,\lambda},\F_\ell)\simeq \kernel\left(\Res_{G_{\Gamma_1,\lambda},G_{\Delta,\lambda}}^n\right)
		\oplus\rmH^n(G_{\Delta,\lambda},\F_\ell)\oplus 
		\kernel\left(\Res_{G_{\Gamma_2,\lambda},G_{\Delta,\lambda}}^n\right)
		\]
		for $n=1,2$.
		Hence, we may apply \cite[Thm.~B]{qsv:quadratic}, and $\bfH^\bullet(G_{\Gamma,\lambda},\F_\ell)$ is a quadratic algebra.
	\end{proof}
	
	\subsection{The Bogomolov-Positselski property}
	
	A Kummerian oriented pro-$\ell$ group $(G,\theta)$ with torsion-free orientation is said to have the {\sl Bogomolov-Positselski property} if $K_\theta(G)$ is a free pro-$\ell$ group (cf. \cite[\S~3]{qw:bogomolov}).
	From Theorem~\ref{thm:chordal cohomology} we deduce that an oriented pro-$\ell$ RAAG associated to a chordal specially oriented graph has the Bogomolov-Positselski property.
	
	\begin{thm}\label{thm:bogo}
		Let $\Gamma=(\euV,\euE)$ be a chordal specially oriented graph, and let $\lambda$ be a linear orientation.
		Then $(G_{\Gamma,\lambda},\theta_{\Gamma,\lambda})$ has the Bogomolov-Positselski property.
	\end{thm}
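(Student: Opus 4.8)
The plan is to first discharge the Kummerian half of the definition and then convert the freeness of the Frattini kernel into a cohomological statement fed by Theorem~\ref{thm:chordal cohomology}. Since $\Gamma$ is specially oriented and $\lambda$ is a linear, hence torsion-free, orientation, Theorem~\ref{thm:cyc pRAAG} shows that $(G_{\Gamma,\lambda},\theta_{\Gamma,\lambda})$ is a Kummerian oriented pro-$\ell$ group with torsion-free orientation; so the Bogomolov-Positselski property amounts exactly to the assertion that $K_{\theta_{\Gamma,\lambda}}(G_{\Gamma,\lambda})$ is a free pro-$\ell$ group, i.e. that $\cd(K_{\theta_{\Gamma,\lambda}}(G_{\Gamma,\lambda}))\le 1$. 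Writing $G=G_{\Gamma,\lambda}$, $\theta=\theta_{\Gamma,\lambda}$ and $\bar G=G/K_\theta(G)$, Proposition~\ref{prop:kummer} identifies $\bar G$ with the maximal $\theta$-abelian quotient of $G$, and the projection $\varphi\colon G\to\bar G$ is a Frattini cover, i.e. $\kernel(\varphi)=K_\theta(G)\subseteq\Phi(G)$.

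Next I would bring in the cohomology. As $G$ is finitely generated, so is $\bar G$, and a finitely generated locally uniform pro-$\ell$ group is uniform; hence by Lazard's theorem (Remark~\ref{rem:locunif cohom}) one has $\bfH^\bullet(\bar G,\F_\ell)\simeq\bfLam_\bullet(\rmH^1(\bar G,\F_\ell))$, the exterior algebra on $\rmH^1(\bar G,\F_\ell)$. Because $\varphi$ is a Frattini cover, inflation $\rmH^1(\bar G,\F_\ell)\to\rmH^1(G,\F_\ell)$ is an isomorphism. On the other hand Theorem~\ref{thm:chordal cohomology} gives $\bfH^\bullet(G,\F_\ell)\simeq\bfLam_\bullet(\ddot\Gamma^{\op})$, a quadratic algebra generated in degree $1$. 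Since the inflation map $\bfH^\bullet(\bar G,\F_\ell)\to\bfH^\bullet(G,\F_\ell)$ is a homomorphism of graded $\F_\ell$-algebras that is an isomorphism in degree $1$ and whose target is generated in degree $1$, its image is a subalgebra containing all degree-$1$ elements, so it is surjective in every degree.

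The key input is then the cohomological criterion for the Bogomolov-Positselski property from \cite{qw:bogomolov}: for a finitely generated Kummerian oriented pro-$\ell$ group $(G,\theta)$ with torsion-free orientation and maximal $\theta$-abelian (uniform) quotient $\bar G=G/K_\theta(G)$, the kernel $K_\theta(G)$ is free if and only if the inflation map $\bfH^\bullet(\bar G,\F_\ell)\to\bfH^\bullet(G,\F_\ell)$ is surjective. Feeding the surjectivity just established into this criterion yields that $K_\theta(G)$ is free, which is exactly the Bogomolov-Positselski property for $(G_{\Gamma,\lambda},\theta_{\Gamma,\lambda})$.

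The hard part is the implication ``inflation surjective $\Rightarrow K_\theta(G)$ free'' underlying the cited criterion: the Lyndon-Hochschild-Serre spectral sequence of $1\to K_\theta(G)\to G\to\bar G\to 1$ only shows directly that surjectivity of inflation forces $E_\infty^{p,q}=0$ for $q\ge1$, and one must upgrade this to the vanishing of $\rmH^q(K_\theta(G),\F_\ell)$ for $q\ge2$; this is where the uniform (Poincaré duality) structure of $\bar G$ and the torsion-free Kummerian structure enter, and I would invoke \cite{qw:bogomolov} rather than reprove it. Should a more self-contained argument be preferred, an alternative is induction on $|\euV|$ along the proper amalgamated decomposition \eqref{eq:amalg 1}: the base case of a complete $\Gamma$ gives a uniform $G$ with $K_\theta(G)=1$ (cf. Corollary~\ref{cor:complete RAAG cyc}), while in the inductive step one lets $K_\theta(G)$ act on the Bass-Serre pro-$\ell$ tree of $G\simeq G_{\Gamma_1,\lambda}\amalg^{\hat\ell}_{G_{\Delta,\lambda}}G_{\Gamma_2,\lambda}$, and applies Theorem~\ref{thm:zalmel} together with Melnikov's theorem \cite{melnikov:freeprod}. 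There the decisive points are that the uniform edge group $G_{\Delta,\lambda}$ maps injectively into $\bar G$ (so $K_\theta(G)$ meets each conjugate of it trivially) and that each vertex stabiliser equals a conjugate of the free group $K_{\theta_i}(G_{\Gamma_i,\lambda})$ provided by induction; controlling how these uniform and free pieces sit inside $G$ is precisely the obstacle shared by both routes, and I would handle it through the properness of the amalgam recorded in the proof of Theorem~\ref{thm:chordal cohomology}.
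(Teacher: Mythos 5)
Your primary route breaks at the ``key input'': the criterion ``inflation $\bfH^\bullet(\bar G,\F_\ell)\to\bfH^\bullet(G,\F_\ell)$ surjective $\Rightarrow K_\theta(G)$ free'' is not merely unproved, it is false. Take $\Gamma$ to be the oriented graph with all vertices and edges ordinary whose associated na\"ive graph is $\mathrm{C}_4$, so that $G:=G_{\Gamma,\lambda}$ is the pro-$\ell$ completion of $F_2\times F_2$ and $\theta_{\Gamma,\lambda}=\mathbf{1}$. This is a finitely generated Kummerian oriented pro-$\ell$ group with torsion-free orientation ($G^{ab}\simeq\Z_\ell^4$ is free abelian, cf.\ Example~\ref{exam:kummer groups}--(c)), $\bar G=G/K_{\mathbf 1}(G)=G^{ab}$ is uniform, and $\bfH^\bullet(G,\F_\ell)\simeq\bfLam_\bullet(\mathrm{C}_4^{\op})$ is quadratic and generated in degree one (by K\"unneth, or \cite[Thm.~F]{qsv:quadratic}). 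Your formal argument --- inflation is an isomorphism in degree one and its image is a subalgebra of an algebra generated in degree one --- therefore shows that inflation is surjective in every degree here as well. Yet $K_{\mathbf 1}(G)=G'\simeq F_2'\times F_2'$ contains a copy of $\Z_\ell\times\Z_\ell$ and so is not free. The underlying problem is that your main argument never uses chordality: Theorem~\ref{thm:chordal cohomology} enters only through its conclusion, which also holds for the non-chordal graph $\mathrm{C}_4$, whereas $G_{\mathrm{C}_4,\lambda}$ fails the Bogomolov--Positselski property. So whatever \cite{qw:bogomolov} proves, it cannot be the criterion you state; chordality must be used structurally, not just through the ring $\bfH^\bullet(G,\F_\ell)$.

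Your fallback is the correct argument, and it is exactly the paper's proof: induct on $|\euV|$ along the chordal decomposition, with complete graphs as the base case (there $K_{\theta_{\Gamma,\lambda}}(G_{\Gamma,\lambda})$ is trivial by Corollary~\ref{cor:complete RAAG cyc}), and in the inductive step use the properness of the amalgam \eqref{eq:amalg 1} established in the proof of Theorem~\ref{thm:chordal cohomology}. The paper packages the Bass--Serre/Mel$'$nikov analysis you sketch as a single citation of \cite[Thm.~5.5]{qw:bogomolov} (a proper amalgam of oriented pro-$\ell$ groups with the Bogomolov--Positselski property over a uniform amalgam again has the property); if you carry it out by hand, note that the vertex stabilizers of the action of $K_\theta(G)$ on the pro-$\ell$ tree are the groups $K_\theta(G)\cap{}^gG_{\Gamma_i,\lambda}={}^g\bigl(K_\theta(G)\cap G_{\Gamma_i,\lambda}\bigr)$, and identifying these with conjugates of $K_{\theta_i}(G_{\Gamma_i,\lambda})$ requires checking that $G_{\Gamma_i,\lambda}/K_{\theta_i}(G_{\Gamma_i,\lambda})$ injects into $\bar G$ --- this is where Lemma~\ref{lem:amalg} and the properness of the amalgam do their work.
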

	
	\begin{proof}
		Since $\Gamma$ is specially oriented, $(G_{\Gamma,\lambda},\theta_{\Gamma,\lambda})$ is Kummerian by Theorem~\ref{thm:cyc pRAAG}.
		Let $\Gamma_1,\Gamma_2$ be proper induced subgraphs of $\Gamma$ whose intersection is a clique $\Delta$.
		By the proof of Theorem~\ref{thm:chordal cohomology}, the amalgamated free pro-$\ell$ product \eqref{eq:amalg 1} is proper, and moreover it satisfies the hypothesis of \cite[Thm.~5.5]{qw:bogomolov}.
		
		If $\Gamma$ is complete, then it has the Bogomolov-Positselski property as $K_{\theta_{\Gamma,\lambda}}(G_{\Gamma,\lambda})$ is trivial.
		Otherwise, by induction we may assume that $(G_{\Gamma_i,\lambda},\theta_{\Gamma_i,\lambda})$ has the Bogomolov-Positselski property for both $i=1,2$, and \cite[Thm.~5.5]{qw:bogomolov} yields the claim.
	\end{proof}
	
	Let $\K$ be a field containing a primitive $\ell$-th root of unity (and also $\sqrt{-1}$ if ${\ell}=2$).
	In \cite[Conj.~1.2]{pos:k}, L.~Positselski conjectures that $(G_{\K}(\ell),\hat\theta_{\K})$ has the Bogomolov-Positselski property.
	Therefore, Theorem~\ref{thm:bogo} implies Corollary~\ref{cor:main}--(iii).
	
	
	\subsection{Coherent oriented pro-$\ell$ RAAGs}\label{ssec:coherent}
	
	A finitely generated pro-$\ell$ group $G$ is said to be {\sl coherent} if every finitely generated subgroup $H\subseteq G$ is also finitely presented, i.e., if $|\rmH^1(H,\F_\ell)|<\infty$ implies $|\rmH^2(H,\F_\ell)|<\infty$.
	Moreover, $G$ is said to be {\sl of type $FP_\infty$} if $\rmH^n(G,\F_\ell)$ is finite for every $n\geq1$.
	We prove that every finitely generated subgroup of an oriented pro-$\ell$ RAAG associated to a chordal specially oriented graph --- even if it may not occur as an oriented pro-$\ell$ RAAG --- is of type $FP_\infty$.
	
	\begin{thm}
		Let $\Gamma=(\euV,\euE)$ be a chordal specially oriented graph, and let $\lambda$ be a linear orientation.
		Then every finitely generated subgroup of $G_{\Gamma,\lambda}$ is of type $FP_\infty$.
		In particular, $G_{\Gamma,\lambda}$ is coherent.
	\end{thm}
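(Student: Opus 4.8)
The plan is to prove the stronger statement---that every finitely generated closed subgroup $H$ of $G_{\Gamma,\lambda}$ satisfies $|\rmH^n(H,\F_\ell)|<\infty$ for all $n\geq 0$---by induction on $|\euV|$, using the chordal structure of $\Gamma$ furnished by Proposition~\ref{prop:chordalgraphs}--(ii). Coherence of $G_{\Gamma,\lambda}$ then follows immediately, since a finitely generated $H$ with $|\rmH^1(H,\F_\ell)|<\infty$ automatically has $|\rmH^2(H,\F_\ell)|<\infty$, hence is finitely presented.

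For the base case, where $\Gamma$ is complete, $G_{\Gamma,\lambda}$ is a finitely generated uniform---thus torsion-free compact $\ell$-adic analytic---pro-$\ell$ group by Corollary~\ref{cor:complete RAAG cyc}. Every closed subgroup $H$ is again $\ell$-adic analytic and torsion-free, hence topologically finitely generated and a Poincar\'e duality pro-$\ell$ group of dimension $\le\dim G_{\Gamma,\lambda}$ (Lazard); in particular $\rmH^n(H,\F_\ell)$ is finite for every $n$ and vanishes above $\dim H$. I record here the feature driving the whole induction: a finitely generated uniform pro-$\ell$ group is \emph{Noetherian}, i.e.\ each of its closed subgroups is finitely generated and of type $FP_\infty$ (being $\ell$-adic analytic). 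This is the pro-$\ell$ analogue of the slenderness of $\Z^{k}$ exploited in C.~Droms' discrete argument.

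For the inductive step, I write $\Gamma$ as the patching of two proper induced chordal specially oriented subgraphs $\Gamma_1,\Gamma_2$ along a common clique $\Delta$, so that by \eqref{eq:amalg 1} and the proof of Theorem~\ref{thm:chordal cohomology} one obtains a \emph{proper} amalgamated free pro-$\ell$ product $G_{\Gamma,\lambda}\simeq A\amalg^{\hat\ell}_{C}B$ with $A=G_{\Gamma_1,\lambda}$, $B=G_{\Gamma_2,\lambda}$ and $C=G_{\Delta,\lambda}$ finitely generated uniform. Let $\mathrm{T}$ be the standard pro-$\ell$ tree of this amalgam and let $H\subseteq G_{\Gamma,\lambda}$ be finitely generated. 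The action of $H$ on $\mathrm{T}$ realizes $H$, by the theory of pro-$\ell$ groups acting on pro-$\ell$ trees (cf.\ Theorem~\ref{thm:zalmel} and \cite{ribzal:book}), as the fundamental pro-$\ell$ group $H\simeq\Pi_1(\mathcal{H},\Delta_H)$ of a graph of pro-$\ell$ groups over $\Delta_H=H\backslash\!\backslash\mathrm{T}$, whose vertex groups are the intersections $H\cap{}^{g}A$, $H\cap{}^{g}B$ and whose edge groups are the intersections $H\cap{}^{g}C$. Because $C$ is Noetherian, the edge groups are finitely generated $\ell$-adic analytic, hence of type $FP_\infty$; the same Noetherianity forces $\Delta_H$ to be finite and the vertex groups to be finitely generated---this being the pro-$\ell$ transcription of the accessibility of finitely generated groups acting on trees with slender edge stabilizers. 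By the inductive hypothesis applied to $A$ and $B$, each vertex group $H\cap{}^{g}A$, $H\cap{}^{g}B$---a finitely generated closed subgroup of a conjugate of $A$ or $B$---is of type $FP_\infty$.

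Finally, since $\Delta_H$ is finite and all vertex and edge groups $H_v,H_e$ of $\mathcal{H}$ have finite $\F_\ell$-cohomology in every degree, the Mayer--Vietoris long exact sequence attached to the finite graph of pro-$\ell$ groups $\mathcal{H}$,
\[
\cdots\to\bigoplus_{v}\rmH^n(H_v,\F_\ell)\to\bigoplus_{e}\rmH^n(H_e,\F_\ell)\to\rmH^{n+1}(H,\F_\ell)\to\bigoplus_{v}\rmH^{n+1}(H_v,\F_\ell)\to\cdots,
\]
squeezes each $\rmH^n(H,\F_\ell)$ between finite groups, so that $|\rmH^n(H,\F_\ell)|<\infty$ for all $n$ and $H$ is of type $FP_\infty$. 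The only delicate point is the structural claim in the inductive step: that a finitely generated closed subgroup of the proper amalgam $A\amalg^{\hat\ell}_{C}B$ acts on $\mathrm{T}$ with finite quotient graph and finitely generated vertex groups. This is precisely where the Noetherian (slender) nature of the uniform edge group $C$ is indispensable, and establishing it rigorously in the pro-$\ell$ setting---where Bass--Serre theory for closed subgroups of infinite index is considerably subtler than for open subgroups---is the main obstacle.
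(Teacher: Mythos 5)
There is a genuine gap, and it is exactly the one you flag at the end: the structural claim that a finitely generated \emph{closed} subgroup $H$ of the proper amalgam $A\amalg^{\hat\ell}_{C}B$ decomposes as the fundamental pro-$\ell$ group of a \emph{finite} graph of pro-$\ell$ groups with finitely generated vertex groups and edge groups conjugate into $C$. Theorem~\ref{thm:zalmel}, which you invoke, applies only to \emph{open} subgroups. For closed subgroups of infinite index the quotient $H\backslash\!\backslash\mathrm{T}$ is merely a profinite graph, and no pro-$\ell$ analogue of accessibility for slender (Noetherian) edge groups is available: even for free pro-$\ell$ products, closed subgroups decompose (when they do) over profinite index sets, and the finiteness of the quotient graph cannot be extracted from Noetherianity of $C$ by transcribing the discrete Dunwoody-type argument. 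Since your Mayer--Vietoris sequence presupposes precisely this finite decomposition, the inductive step does not go through as written; filling it would require proving a new subgroup theorem for pro-$\ell$ amalgams, which is a substantial open-ended task rather than a routine verification.

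The paper avoids tree actions altogether. It uses Theorem~\ref{thm:bogo} (the Bogomolov--Positselski property for chordal specially oriented $\Gamma$) to produce, for any finitely generated subgroup $H$, a short exact sequence $\{1\}\to K\to H\to Q\to\{1\}$ with $K=K_{\theta_{\Gamma,\lambda}}(G_{\Gamma,\lambda})\cap H$ a \emph{free} pro-$\ell$ group and $Q$ locally uniform (by Theorem~\ref{thm:cyc pRAAG}), and then applies King's homological finiteness criterion \cite{king}: $H$ is of type $FP_\infty$ if and only if each $H_n(K,\Z_{\ell})$ is a finitely generated $\Z_{\ell}\dbl Q\dbr$-module, which holds because $H_1(K,\Z_{\ell})$ is finitely generated ($H$ being finitely generated) and $H_n(K,\Z_{\ell})=0$ for $n\geq 2$ by freeness of $K$. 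Your base case (complete graphs, via Lazard) is correct, and your overall strategy is the natural pro-$\ell$ shadow of Droms' discrete proof; but without an independent proof of the subgroup decomposition, the argument is incomplete, whereas the extension-plus-King route closes the question with tools already established in the paper.
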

	
	\begin{proof}
		Set $G=G_{\Gamma,\lambda}$, and let $H$ be a finitely generated subgroup of $G$.
		Moreover, set 
		\[
		K=K_{\theta_{\Gamma,\lambda}}(G)\cap H\qquad\text{and}\qquad
		Q=\frac{HK_{\theta_{\Gamma,\lambda}}(G)}{K_{\theta_{\Gamma,\lambda}}(G)}.
		\]
		Then one has a short exact sequence of pro-$\ell$ groups 
		\[
		\xymatrix{  \{1\}\ar[r] & K \ar[r] & H \ar[r] & Q \ar[r] & \{1\}}
		\]
		where $K$ is a free pro-$\ell$ group by Theorem~\ref{thm:bogo}, and $Q$ is locally powerful as $\Gamma$ is specially oriented (cf. Theorem~\ref{thm:cyc pRAAG}).
		By \cite[Thm.~2, \S~3]{king}, $H$ is of type $FP_{\infty}$ if, and only if, $H_n(K,\Z_{\ell})$ is a finitely generated $\Z_{\ell}[\![Q]\!]$-module for each $n\geq1$.
		Since $H$ is finitely generated, $H_1(K,\Z_{\ell})$ is a finitely generated $\Z_{\ell}[\![Q]\!]$-module, while $H_n(K,\Z_{\ell})=0$ for each $n\geq2$ as $K$ is a free pro-$\ell$ group. 
		Hence, $H$ is of type $FP_\infty$.
		In particular, $\rmH^2(H,\F_{\ell})$ is finite, and thus $G$ is coherent.
	\end{proof}

	\begin{bibdiv}
		\begin{biblist}
			\bib{bartholdi}{article}{
				author={Bartholdi, L.},
				author={H\"{a}rer, H.},
				author={Schick, Th.},
				title={Right angled Artin groups and partial commutation, old and new},
				journal={Enseign. Math.},
				volume={66},
				date={2020},
				number={1-2},
				pages={33--61},
			}
			
			
			\bib{BLMS}{article}{
				author={Benson, D.},
				author={Lemire, N.},
				author={Mina\v{c}, J.},
				author={Swallow, J.},
				title={Detecting pro-$p$-groups that are not absolute Galois groups},
				journal={J. Reine Angew. Math.},
				volume={613},
				date={2007},
				pages={175--191},
				issn={0075-4102},
			}

			\bib{chordalgraphs}{article}{
				author={Blair, J.R.S.},
				author={Peyton, B.},
				title={An introduction to chordal graphs and clique trees},
				conference={
					title={Graph theory and sparse matrix computation},
				},
				book={
					series={IMA Vol. Math. Appl.},
					volume={56},
					publisher={Springer, New York},
				},
				date={1993},
				pages={1--29},
			}
			
			\bib{simoblu}{unpublished}{
				author={Blumer, S.},
				title={Teoria Geometrica dei Gruppi Spazi CAT(0), Teorema di Gromov e oriented right-angled Artin groups},
				date={2020},
				note={MSc. thesis, University of Milano-Bicocca, available at {\tt arXiv:2105.04227}},
			}

			
			\bib{ac:RAAGs}{article}{
				author={Cassella, A.},
				author={Quadrelli, C.},
				title={Right-angled Artin groups and enhanced Koszul properties},
				journal={J. Group Theory},
				volume={24},
				date={2021},
				number={1},
				pages={17--38},
			}

			\bib{cem}{article}{
				author={Chebolu, S.K.},
				author={Efrat, I.},
				author={Minac, J.},
				title={Quotients of absolute Galois groups which determine the entire
					Galois cohomology},
				journal={Math. Ann.},
				volume={352},
				date={2012},
				number={1},
				pages={205--221},
				issn={0025-5831},
			}			
			
			\bib{dcf:lift}{unpublished}{
				author={De Clercq, C.},
				author={Florence, M.},
				title={Lifting theorems and smooth profinite groups},
				date={2017},
				note={Preprint, available at {\tt arxiv:1710.10631}},
			}

			\bib{graphbook}{book}{
				author={Diestel, R.},
				title={Graph theory},
				series={Graduate Texts in Mathematics},
				volume={173},
				edition={5},
				publisher={Springer, Berlin},
				date={2017},
				pages={xviii+428},
			}
			
			\bib{ddsms}{book}{
				author={Dixon, J.D.},
				author={du Sautoy, M.P.F.},
				author={Mann, A.},
				author={Segal, D.},
				title={Analytic pro-$p$ groups},
				series={Cambridge Studies in Advanced Mathematics},
				volume={61},
				edition={2},
				publisher={Cambridge University Press, Cambridge},
				date={1999},
				pages={xviii+368},
			}
			\bib{droms:coh}{article}{
				author={Droms, C.},
				title={Graph groups, coherence, and three-manifolds},
				journal={J. Algebra},
				volume={106},
				date={1987},
				number={2},
				pages={484--489},
				issn={0021-8693},
			}

			\bib{droms:sub}{article}{
				author={Droms, C.},
				title={Subgroups of graph groups},
				journal={J. Algebra},
				volume={110},
				date={1987},
				number={2},
				pages={519--522},
				issn={0021-8693},
			}			
			
			\bib{efrat:etc1}{article}{
				author={Efrat, I.},
				title={Orderings, valuations, and free products of Galois groups},
				journal={Sem. Structure Alg\'ebriques Ordonn\'ees, Univ. Paris VII},
				date={1995},
			}
			
			\bib{efrat:etc2}{article}{
				author={Efrat, I.},
				title={Pro-$p$ Galois groups of algebraic extensions of $\mathbf{Q}$},
				journal={J. Number Theory},
				volume={64},
				date={1997},
				number={1},
				pages={84--99},
			}

			\bib{efrat:small}{article}{
				author={Efrat, I.},
				title={Small maximal pro-$p$ Galois groups},
				journal={Manuscripta Math.},
				volume={95},
				date={1998},
				number={2},
				pages={237--249},
				issn={0025-2611},
			}
			\bib{efrat:etc3}{unpublished}{
				title={The symbol length for elementary type pro-$ p $ groups and Massey products},
				author={Efrat, I.},
				date={2022},
				note={Preprint, available at {\tt arxiv:2212.02249}},
			}
			
			
			\bib{eq:kummer}{article}{
				author={Efrat, I.},
				author={Quadrelli, C.},
				title={The Kummerian property and maximal pro-$p$ Galois groups},
				journal={J. Algebra},
				volume={525},
				date={2019},
				pages={284--310},
				issn={0021-8693},
			}
			
			\bib{king}{article}{
				author={King, J.D.},
				title={Homological finiteness conditions for pro-$p$ groups},
				journal={Comm. Algebra},
				volume={27},
				date={1999},
				number={10},
				pages={4969--4991},
				issn={0092-7872},
			}
			
			
			\bib{HW:book}{book}{
				author={Haesemeyer, C.},
				author={Weibel, Ch.},
				title={The norm residue theorem in motivic cohomology},
				series={Annals of Mathematics Studies},
				volume={200},
				publisher={Princeton University Press, Princeton, NJ},
				date={2019},
			}

			\bib{labute:demushkin}{article}{
				author={Labute, J.P.},
				title={Classification of Demushkin groups},
				journal={Canad. J. Math.},
				volume={19},
				date={1967},
				pages={106--132},
				issn={0008-414X},
			}
			
			\bib{lazard}{article}{
				author={Lazard, M.},
				title={Groupes analytiques $p$-adiques},
				language={French},
				journal={Inst. Hautes \'{E}tudes Sci. Publ. Math.},
				number={26},
				date={1965},
				pages={389--603},
			}

			\bib{marshall:etc}{article}{
				author={Marshall, M.},
				title={The elementary type conjecture in quadratic form theory},
				conference={
					title={Algebraic and arithmetic theory of quadratic forms},
				},
				book={
					series={Contemp. Math.},
					volume={344},
					publisher={Amer. Math. Soc., Providence, RI},
				},
				date={2004},
				pages={275--293},}
			
			\bib{melnikov:freeprod}{article}{
				author={Mel\cprime nikov, O.V.},
				title={Subgroups and the homology of free products of profinite groups},
				language={Russian},
				journal={Izv. Akad. Nauk SSSR Ser. Mat.},
				volume={53},
				date={1989},
				number={1},
				pages={97--120},
				issn={0373-2436},
				translation={
					journal={Math. USSR-Izv.},
					volume={34},
					date={1990},
					number={1},
					pages={97--119},
					issn={0025-5726},
				},
			}
			
			\bib{MPPT}{article}{
				author={Mina\v{c}, J.},
				author={Palasti, M.},
				author={Pasini, F.},
				author={T\^{a}n, N.~D.},
				title={Enhanced Koszul properties in Galois cohomology},
				journal={Res. Math. Sci.},
				volume={7},
				date={2020},
				number={2},
				pages={Paper No. 10},
			}
			
			\bib{MPQT}{article}{
				author={Mina\v{c}, J.},
				author={Pasini, F.},
				author={Quadrelli, C.},
				author={T\^{a}n, N.~D.},
				title={Koszul algebras and quadratic duals in Galois cohomology},
				journal={Adv. Math.},
				volume={380},
				date={2021},
				pages={paper no. 107569},
			}

			
			\bib{MT:conj}{article}{
				author={Mina\v{c}, J.},
				author={T\^{a}n, N. D.},
				title={The kernel unipotent conjecture and the vanishing of Massey
					products for odd rigid fields},
				journal={Adv. Math.},
				volume={273},
				date={2015},
				pages={242--270},
				issn={0001-8708},
			}
			
			\bib{MT:Gal}{article}{
				author={Mina\v{c}, J.},
				author={T\^{a}n, N. D.},
				title={Triple Massey products and Galois theory},
				journal={J. Eur. Math. Soc. (JEMS)},
				volume={19},
				date={2017},
				number={1},
				pages={255--284},
			}
			
			\bib{nsw:cohn}{book}{
				author={Neukirch, J.},
				author={Schmidt, A.},
				author={Wingberg, K.},
				title={Cohomology of number fields},
				series={Grundlehren der Mathematischen Wissenschaften},
				volume={323},
				edition={2},
				publisher={Springer-Verlag, Berlin},
				date={2008},
				pages={xvi+825},
				isbn={978-3-540-37888-4},}
			
			\bib{papa}{article}{
				author={Papadima, S.},
				author={Suciu, A. I.},
				title={Algebraic invariants for right-angled Artin groups},
				journal={Math. Ann.},
				volume={334},
				date={2006},
				number={3},
				pages={533--555},
			}
			

			\bib{pos:k}{article}{
				author={Positselski, L.},
				title={Koszul property and Bogomolov's conjecture},
				journal={Int. Math. Res. Not.},
				date={2005},
				number={31},
				pages={1901--1936},
				issn={1073-7928},
			}

			\bib{cq:bk}{article}{
				author={Quadrelli, C.},
				title={Bloch-Kato pro-$p$ groups and locally powerful groups},
				journal={Forum Math.},
				volume={26},
				date={2014},
				number={3},
				pages={793--814},
				issn={0933-7741},
			}


			\bib{cq:noGal}{article}{
				author={Quadrelli, C.},
				title={Two families of pro-$p$ groups that are not absolute Galois groups},
				date={2022},
				volume={25},
				journal={J. Group Theory},
				number={1},
				pages={25--62},   
			}

			\bib{cq:galfeat}{article}{
				author={Quadrelli, C.},
				title={Galois-theoretic features for 1-smooth pro-$p$ groups},
				journal={Canad. Math. Bull.},
				volume={65},
				date={2022},
				number={2},
				pages={525--541},
			}
			
			\bib{cq:1smoothBK}{article}{
				author={Quadrelli, C.},
				title={1-smooth pro-$p$ groups and Bloch-Kato pro-$p$ groups},
				journal={Homology Homotopy Appl.},
				volume={24},
				date={2022},
				pages={53--67},
				number={2},
			}

			\bib{cq:massey}{unpublished}{
				author={Quadrelli, C.},
				title={Massey products in Galois cohomology and the Elementary Type Conjecture },
				date={2022},
				note={Preprint, available at {\tt arXiv:2203.16232}},
			}			
			
			\bib{qsv:quadratic}{article}{
				author={Quadrelli, C.},
				author={Snopce, I.},
				author={Vannacci, M.},
				title={On pro-$p$ groups with quadratic cohomology},
				date={2022},
				journal={J. Algebra},
				volume={612},
				pages={636--690},
			}

			\bib{qw:cyclotomic}{article}{
				author={Quadrelli, C.},
				author={Weigel, Th.},
				title={Profinite groups with a cyclotomic $p$-orientation},
				date={2020},
				journal={Doc. Math.},
				volume={25},
				pages={1881--1916},
			}
			
			\bib{qw:bogomolov}{article}{
				author={Quadrelli, C.},
				author={Weigel, Th.},
				title={Oriented pro-$\ell$ groups with the Bogomolov-Positselski
					property},
				journal={Res. Number Theory},
				volume={8},
				date={2022},
				number={2},
				pages={Paper No. 21, 22},
				issn={2522-0160},
			}

			\bib{ribes:amalg}{article}{
				author={Ribes, L.},
				title={On amalgamated products of profinite groups},
				journal={Math. Z.},
				volume={123},
				date={1971},
				pages={357--364},
				issn={0025-5874},
			}
			
			\bib{ribzal:book}{book}{
				author={Ribes, L.},
				author={Zalesski{\u{\i}}, P.A.},
				title={Profinite groups},
				series={Ergebnisse der Mathematik und ihrer Grenzgebiete. 3. Folge. A
					Series of Modern Surveys in Mathematics},
				volume={40},
				edition={2},
				publisher={Springer-Verlag, Berlin},
				date={2010},
				pages={xvi+464},
				isbn={978-3-642-01641-7},
			}
			

			\bib{salv:coh}{article}{
				author={Salvetti, M.},
				title={Topology of the complement of real hyperplanes in ${\bf C}^N$},
				journal={Invent. Math.},
				volume={88},
				date={1987},
				number={3},
				pages={603--618},
				issn={0020-9910},
			}		
			
			\bib{serre:gal}{book}{
				author={Serre, J.-P.},
				title={Galois cohomology},
				series={Springer Monographs in Mathematics},
				edition={Corrected reprint of the 1997 English edition},
				note={Translated from the French by Patrick Ion and revised by the
					author},
				publisher={Springer-Verlag, Berlin},
				date={2002},
				pages={x+210},
			}
			
			\bib{serre:trees}{book}{
				author={Serre, J.-P.},
				title={Trees},
				series={Springer Monographs in Mathematics},
				note={Translated from the French original by John Stillwell;
					Corrected 2nd printing of the 1980 English translation},
				publisher={Springer-Verlag, Berlin},
				date={2003},
				pages={x+142},
				isbn={3-540-44237-5},
			}
			
			\bib{serv}{article}{
				author={Servatius, H.},
				author={Droms, C.},
				author={Servatius, B.},
				title={Surface subgroups of graph groups},
				journal={Proc. Amer. Math. Soc.},
				volume={106},
				date={1989},
				number={3},
				pages={573--578},
			}

			\bib{SZ:RAAGs}{article}{
				author={Snopce, I.},
				author={Zalesski\u{\i}, P.A.},
				title={Right-angled Artin pro-$p$ groups},
				date={2022},
				journal={Bull. Lond. Math. Soc.},
				volume={54},
				pages={1904--1922},
				number={5},
			}

			\bib{sw:cohomology}{article}{
				author={Symonds, P.},
				author={Weigel, Th.S.},
				title={Cohomology of $p$-adic analytic groups},
				conference={
					title={New horizons in pro-$p$ groups},
				},
				book={
					series={Progr. Math.},
					volume={184},
					publisher={Birkh\"{a}user Boston, Boston, MA},
				},
				date={2000},
				pages={349--410},,
			}
			
			\bib{voev}{article}{
				author={Voevodsky, V.},
				title={On motivic cohomology with $\bold Z/l$-coefficients},
				journal={Ann. of Math. (2)},
				volume={174},
				date={2011},
				number={1},
				pages={401--438},
				issn={0003-486X},
			}

			\bib{wolk}{article}{
				author={Wolk, E. S.},
				title={A note on ``The comparability graph of a tree''},
				journal={Proc. Amer. Math. Soc.},
				volume={16},
				date={1965},
				pages={17--20},
				issn={0002-9939},
			}	
			
			\bib{wurfel}{article}{
				author={W\"{u}rfel, T.},
				title={On a class of pro-$p$ groups occurring in Galois theory},
				journal={J. Pure Appl. Algebra},
				volume={36},
				date={1985},
				number={1},
				pages={95--103},
				issn={0022-4049},
			}

			\bib{zalmel}{article}{
				author={Zalesski{\u{\i}}, P.A.},
				author={Mel\cprime nikov, O.V.},
				title={Subgroups of profinite groups acting on trees},
				language={Russian},
				journal={Mat. Sb. (N.S.)},
				volume={135(177)},
				date={1988},
				number={4},
				pages={419--439, 559},
				issn={0368-8666},
				translation={
					journal={Math. USSR-Sb.},
					volume={63},
					date={1989},
					number={2},
					pages={405--424},
					issn={0025-5734},
				},
			}

		\end{biblist}
	\end{bibdiv}
\end{document}